\title{The eta-inverted sphere over the rationals}
\author{Glen Matthew Wilson}
\newcommand{\Pihat}{\hat{\Pi}}
\newcommand{\myol}[2][3]{{}\mkern#1mu\overline{\mkern-#1mu#2}}
\newcommand{\onetwomid}[2]{\draw[line width=.6pt] (#1, #2) -- (#1, #2+1);
\foreach \n in {0,1}{\filldraw [black, fill=blue] (#1, \n+#2+.1) circle (2pt);}
\foreach \n in {0}{\filldraw [black, fill=red] (#1-.3, \n+#2+.2) circle (2pt);} 
\foreach \n in {1}{\filldraw [black, fill=green] (#1-.3, \n+#2+.2) circle (2pt);} 
}
\newcommand{\newdtwomed}[2]{\draw [line width=.6pt, red] (#1, #2+.1)-- (#1-1, #2+.1+1);
\draw [line width=.6pt, red] (#1-.3, #2+.2)-- (#1-1-.3, #2+.2+1);
\draw [line width=.6pt, red] (#1, #2+1+.1)-- (#1-1, #2+2+.1);
\draw [line width=.6pt, red] (#1-.3, #2+1+.2)-- (#1-1-.3, #2+2+.2);
}
\newcommand{\vtwomultmid}[2]{\foreach \n in {0,1}{\draw [line width=.6pt, ->, >=stealth] (#1, #2+.1+\n)-- (#1+.5, #2+.1+.165+\n);}
\foreach \n in {0,1}{\draw [line width=.6pt, ->, >=stealth] (#1-.3, #2+.2+\n)-- (#1+.5-.3, #2+.2+.165+\n);}
}
\newcommand{\Fbar}{\myol{F}}
\newcommand{\twocomp}{{}^{{\kern -.7pt}\wedge}_2}
\newcommand{\ellcomp}{{}^{{\kern -.5pt}\wedge}_{\ell}}
\newcommand{\unit}{\mathbbm{1}}
\newcommand{\SH}{\mathcal{SH}}
\DeclareMathOperator{\Ext}{Ext}
\DeclareMathOperator{\Hom}{Hom} 
\DeclareMathOperator{\colim}{colim}
\newcommand{\st}{ \, \vert \, }
\DeclareMathOperator{\InverseLimit}{\underleftarrow{\lim}}
\newcommand{\MASS}{\frakM}
\newcommand{\BSS}{\frakB}
\newcommand{\Hcomp}{{}^{{\kern -.4pt}\wedge}_{{\kern -.5pt}H}}
\DeclareMathOperator{\cd}{cd}
\newcommand{\presup}[1]{{}^{#1}}
\newcommand{\Ei}{{}^i\mathrm{E}}
\newcommand{\Fi}{{}^i\mathrm{F}}
\newcommand{\Gi}{{}^i\mathrm{G}}
\newcommand{\di}{{}^i\mathrm{d}}
\theoremstyle{definition} 
\newtheorem{proposition}[equation]{Proposition}
\newtheorem{definition}[equation]{Definition} 
\newtheorem{theorem}[equation]{Theorem}
\newtheorem{corollary}[equation]{Corollary} 
\newtheorem{lemma}[equation]{Lemma} 
\theoremstyle{remark}
\newcommand{\bbC}{\mathbb{C}}
\newcommand{\bbF}{\mathbb{F}}
\newcommand{\bbN}{\mathbb{N}}
\newcommand{\bbQ}{\mathbb{Q}}
\newcommand{\bbR}{\mathbb{R}}
\newcommand{\bbZ}{\mathbb{Z}}
\newcommand{\calA}{\mathcal{A}} 
\newcommand{\calC}{\mathcal{C}}
\newcommand{\frakB}{\mathfrak{B}}
\newcommand{\frakM}{\mathfrak{M}}
\newcommand{\rmE}{\textrm{E}}
\newcommand{\rmF}{\textrm{F}} 
\newcommand{\rmG}{\textrm{G}}
\begin{document}
\maketitle


\begin{abstract}
  We calculate the motivic stable homotopy groups of the two-complete
  sphere spectrum after inverting multiplication by the Hopf map
  $\eta$ over fields of cohomological dimension at most $2$ with
  characteristic different from 2 (this includes the $p$-adic fields
  $\bbQ_p$ and the finite fields $\bbF_q$ of odd characteristic) and
  the field of rational numbers; the ring structure is also
  determined.
\end{abstract}

\section{Introduction}
\label{sec:introduction}

Guillou and Isaksen laid the foundation for calculating
$\pi_{**}(\unit\twocomp)[\eta^{-1}]$, the motivic stable homotopy
groups of the two-complete sphere spectrum after inverting
multiplication by $\eta$, over the complex numbers using the
$h_1$-inverted motivic Adams spectral sequence in
\cite{GI}. They conjectured a pattern of differentials in the
$h_1$-inverted motivic Adams spectral sequence and identified the
$E_{\infty}$ page of the spectral sequence assuming the
conjecture. Shortly after Guillou and Isaksen's paper appeared,
Andrews and Miller \cite{AM} proved Guillou and Isaksen's
conjecture. All together, these results show
$\pi_{**}(\unit\twocomp)[\eta^{-1}] \cong \bbF_2[\eta^{\pm 1}, \mu,
\varepsilon]/(\varepsilon^2)$
\cite[Conjecture 1.3]{GI}.  Guillou and Isaksen then analyzed
the $h_1$-inverted motivic Adams spectral sequence over the real
numbers and gave a complete calculation of the ring
$\pi_{**}(\unit\twocomp)[\eta^{-1}]$ over the base field $\bbR$
\cite{GI-Real}. 

The subject of this paper is the calculation of
$\pi_{**}(\unit\twocomp)[\eta^{-1}]$ over the field of rational
numbers $\bbQ$ and fields $F$ with $\cd_2(F)\leq 2$ and characteristic
different from 2, such as the $p$-adic fields $\bbQ_p$ and finite
fields $\bbF_q$ of odd characteristic. We write
$\pi_{s,w}(\unit\twocomp)(F)$ for the stable homotopy group
$\SH(F)(\Sigma^{s,w}\unit,\unit\twocomp)$ and frequently abbreviate
this to $\pi_{s,w}(\unit\twocomp)$ if the base field $F$ is clear from
context.

Write $\MASS(F)$ for the motivic Adams spectral sequence at the prime
$2$ over the field $F$ at the motivic sphere spectrum. This spectral
sequence has $E_2$ page given by
$\MASS(F)_2^{f,s,w} =
\Ext_{\calA_{**}(F)}^{f,s+f,w}(H_{**}(F), H_{**}(F))$
and conditionally converges 
\begin{equation*}
  \MASS(F)^{f,s,w} \Rightarrow \pi_{s,w}(\unit\Hcomp)(F)
\end{equation*}
where $\unit\Hcomp$ is the $H$-nilpotent completion of the sphere
spectrum defined by Bousfield \cite[\S5]{Bousfield}. Hu, Kriz, and
Ormsby \cite[Theorem 1]{HKO-Convergence} proved that $\unit\Hcomp$ is
weakly equivalent to $\unit\twocomp$ over a field $F$ of
characteristic $0$ with $\cd_2(F[\sqrt{-1}])<\infty$. Wilson and
{\O}stv{\ae}r \cite[Proposition 5.10]{WO} note that the same argument
works over fields over positive characteristic under the assumption
that $\cd_2(F[\sqrt{-1}])<\infty$.

Given the conditionally convergent spectral sequence
$\MASS(F) \Rightarrow \pi_{**}(\unit\twocomp)(F)$ and that
$\eta \in \pi_{1,1}(\unit\twocomp)(F)$ is detected by
$h_1 \in \MASS(F)^{1,1,1}$, one can try to calculate
$\pi_{**}(\unit\twocomp)(F)[\eta^{-1}]$ using the $h_1$-inverted
spectral sequence, defined as the following colimit of spectral
sequences.
\begin{equation*}
  \MASS(F)[h_1^{-1}] = \colim(\MASS(F) 
  \xrightarrow{h_1 \cdot} \MASS(F) 
  \xrightarrow{h_1\cdot} \MASS(F) \cdots )
\end{equation*} 
It is not obvious that $\MASS(F)[h_1^{-1}]$ converges to
$\pi_{**}(\unit\twocomp)(F)[\eta^{-1}]$. Guillou and Isaksen show that
it does converge for the complex numbers $\bbC$ in
\cite[\S6]{GI} and the real numbers $\bbR$ in
\cite[\S5]{GI-Real}. We address convergence for more general fields in
section \ref{sec:conv}.

The Milnor-Witt $t$-stem of $\unit\twocomp$ over $F$ is the group
$\Pihat_t(F) = \bigoplus_{k \in \bbZ} \pi_{k+t,k}(\unit\twocomp)(F)$.
Note that $\Pihat_0(F)$ is a ring and $\Pihat_t(F)$ is a
$\Pihat_0(F)$-module. Our main results will be stated in terms of
Milnor-Witt stems and the Witt group of quadratic forms $W(F)$. In
many cases, the two-complete $\eta$-inverted Milnor-Witt 0-stem can be
can be described in terms of the Witt ring of quadratic forms of the
field.

\begin{proposition}
  \label{prop:0-stem}
  If $F$ is a field for which the Witt group of quadratic forms $W(F)$
  is finitely generated or $W(F)$ has bounded $2$-torsion exponent
  ($F=\bbQ$, for example), then there is an isomorphism
  $\Pihat_0(F)[\eta^{-1}] \cong W(F)\twocomp[\eta^{\pm 1}]$.
\end{proposition}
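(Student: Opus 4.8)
The plan is to reduce to Morel's description of the zero-line of the uncompleted motivic stable stems and then to control the two operations applied to it: $2$-completion and inversion of $\eta$. The uncompleted analogue of the proposition is immediate from Morel's work, so the real content is that, under the hypothesis on $W(F)$, $2$-completion produces no phantom classes along the zero-line that survive $\eta$-inversion.

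First, recall Morel's theorem that $\pi_{k,k}(\unit)(F) \cong K^{\mathrm{MW}}_{-k}(F)$ for all $k \in \bbZ$, compatibly with the ring structures ($K^{\mathrm{MW}}_*$ being Milnor--Witt $K$-theory), together with the connectivity statement $\pi_{s,w}(\unit)(F) = 0$ for $s < w$; in particular $\pi_{k-1,k}(\unit)(F) = 0$. Writing $\unit\twocomp = \holim_n \unit/2^n$, I would combine the Milnor sequence
\begin{equation*}
0 \to {\lim_n}^1 \pi_{k+1,k}(\unit/2^n)(F) \to \pi_{k,k}(\unit\twocomp)(F) \to \lim_n \pi_{k,k}(\unit/2^n)(F) \to 0
\end{equation*}
with the mod-$2^n$ Bockstein sequences $0 \to \pi_{s,w}(\unit)(F)/2^n \to \pi_{s,w}(\unit/2^n)(F) \to \pi_{s-1,w}(\unit)(F)[2^n] \to 0$. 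Since $\pi_{k-1,k}(\unit)(F) = 0$, one gets $\pi_{k,k}(\unit/2^n)(F) = K^{\mathrm{MW}}_{-k}(F)/2^n$, so the $\lim$-term is $K^{\mathrm{MW}}_{-k}(F)\twocomp$; and the $\lim^1$-term is $\lim^1$ of a tower that is an extension of $\{K^{\mathrm{MW}}_{-k}(F)[2^n]\}_n$ (transition maps multiplication by $2$) by a tower with surjective transition maps, hence vanishes as soon as $K^{\mathrm{MW}}_{-k}(F)$ has bounded $2$-torsion. For $k \ge 0$ this holds by the hypothesis, since then $K^{\mathrm{MW}}_{-k}(F)$ is $W(F)$ or $\mathrm{GW}(F)$ and the torsion of $\mathrm{GW}(F)$ coincides with that of $W(F)$; thus $\pi_{k,k}(\unit\twocomp)(F) \cong K^{\mathrm{MW}}_{-k}(F)\twocomp$ for $k \ge 0$, while for $k < 0$ there is at worst an extra $\lim^1$-summand.

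It remains to invert $\eta$, that is, to compute in each degree the filtered colimit of $\cdots \to \pi_{k,k}(\unit\twocomp)(F) \xrightarrow{\eta} \pi_{k+1,k+1}(\unit\twocomp)(F) \to \cdots$. By Morel's presentation, multiplication by $\eta$ carries $K^{\mathrm{MW}}_n(F)$ isomorphically onto $K^{\mathrm{MW}}_{n-1}(F)$ for $n \le -1$ and surjectively onto $K^{\mathrm{MW}}_{-1}(F) = W(F)$ from $K^{\mathrm{MW}}_0(F) = \mathrm{GW}(F)$, and $K^{\mathrm{MW}}_*(F)[\eta^{-1}] \cong W(F)[\eta^{\pm 1}]$ with $\eta$ in degree $-1$; the same persists after $(-)\twocomp$ because $\ker(\mathrm{GW}(F) \to W(F))$ is cyclic, so the completed map remains surjective. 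Hence in each degree the colimit is the cofinal telescope over $k \ge 0$, which stabilises at $W(F)\twocomp$; any $\lim^1$-summand appearing for $k < 0$ maps to the corresponding summand in the next higher degree and so dies in the colimit. Carrying the ring structures through yields $\Pihat_0(F)[\eta^{-1}] \cong W(F)\twocomp[\eta^{\pm 1}]$ as graded rings. The step I expect to be the main obstacle is the $\lim^1$-vanishing, equivalently the exclusion of phantom classes along the zero-line of $\pi_{**}(\unit\twocomp)(F)$ --- which is exactly what the bounded-$2$-torsion (or finite-generation) hypothesis on $W(F)$ provides; for $F = \bbQ$ the group $W(\bbQ)$ is not finitely generated, but its torsion subgroup has exponent dividing $4$, which suffices.
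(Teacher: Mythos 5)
Your argument is correct and is essentially the paper's proof: the Milnor sequence for $\holim_n \unit/2^n$ combined with the Bockstein sequences is exactly the short exact sequence $0 \to \Ext(\bbZ/2^{\infty}, \pi_{n,n}\unit) \to \pi_{n,n}(\unit\twocomp) \to \Hom(\bbZ/2^{\infty}, \pi_{n-1,n}\unit) \to 0$ that the paper cites from Hu--Kriz--Ormsby, the correction terms are killed the same way (Morel's connectivity plus Mittag--Leffler for bounded $2$-torsion), and the $\eta$-inversion is handled by the same cofinality observation. The only cosmetic difference is that you unwind the $\Ext(\bbZ/2^{\infty},-)$ bookkeeping explicitly and fold the finitely generated case into the bounded-torsion one rather than citing Bousfield--Kan for it separately.
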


\begin{proof}
  Morel has shown there is an isomorphism $\pi_{n,n}(\unit)\cong W(F)$
  for $n\geq 1$ in \cite{Morel12}. For $n\geq 1$ the homotopy group
  $\pi_{n,n}(\unit\twocomp)$ fits into the exact sequence
  \begin{equation*}
    0 \to \Ext(\bbZ/2^{\infty}, W(F)) 
    \to \pi_{n,n}(\unit\twocomp) 
    \to \Hom(\bbZ/2^{\infty}, \pi_{n-1,n}(\unit)) 
    \to 0
  \end{equation*}
  by \cite[Equation (2)]{HKO-Convergence}. But as
  $\pi_{n-1,n}\unit = 0$ by Morel's connectivity theorem
  \cite{Morel12}, there is an isomorphism
  $\Ext(\bbZ/2^{\infty}, W(F)) \to \pi_{n,n}(\unit\twocomp)$.

  If $W(F)$ is finitely generated, there is an isomorphism
  $\Ext(\bbZ/2^{\infty}, W(F)) \cong W(F)\twocomp$ by a result of
  Bousfield and Kan \cite[Chapter VI, Section 2.1]{BousfieldKan},
  hence $\pi_{n,n}(\unit\twocomp)\cong W(F)\twocomp$. If $W(F)$ has
  bounded 2-torsion exponent, then ${}_{2^n}W(F)={}_{2^m}W(F)$ for all
  $n$ and $m$ sufficiently large. The Mittag-Leffler condition is
  satisfied for the tower $\{{}_{2^n}W(F) \}$, hence
  $\InverseLimit^{1} {}_{2^n}W(F) \cong
  \InverseLimit^{1}\Hom(\bbZ/2^n, W(F))$
  vanishes. Now the short exact sequence of \cite[Application
  3.5.10]{HBook}
  \begin{equation*}
    0\to \InverseLimit^{1}\Hom(\bbZ/2^n, W(F)) \to \Ext(\bbZ/2^{\infty},W(F)) \to W(F)\twocomp \to 0
  \end{equation*}
  shows there is an isomorphism
  $\Ext(\bbZ/2^{\infty}, W(F)) \cong W(F)\twocomp$, and so
  $\pi_{n,n}(\unit\twocomp)\cong W(F)\twocomp$.

  Finally, there is an isomorphism
  $\Pihat_0(F)[\eta^{-1}] \cong W(F)\twocomp[\eta^{\pm 1}]$ since for any
  class $\alpha \in \Pihat_0(F)$ and $n$ sufficiently large, the class
  $\eta^n \alpha$ is an element of $\pi_{k,k}\unit\cong W(F)$ with
  $k\geq 1$.
\end{proof}

For finite fields $\bbF_q$, the Milnor-Witt 0 stem is now
determined by the calculation of the Witt group of finite fields, a
standard reference being Scharlau \cite[Chapter 2, 3.3]{Scharlau}.
\begin{equation*}
  \Pihat_0(\bbF_q)[\eta^{-1}] \cong W(\bbF_q)\twocomp[\eta^{\pm 1}] = 
  \begin{cases}
    \bbZ/2[\eta^{\pm 1}, u]/u^2 & q \equiv 1 \bmod 4 \\
    \bbZ/4[\eta^{\pm 1}] & q \equiv 3 \bmod 4 
  \end{cases}
\end{equation*}

We find in theorem \ref{thm:cd2} that for a field $F$ with
$\cd_2(F)\leq 2$ and characteristic different from 2 the two-complete
$\eta$-inverted Milnor-Witt stems take the following form.
\begin{equation*}
  \Pihat_t(F)[\eta^{-1}] \cong
  \begin{cases}
    W(F)\twocomp[\eta^{\pm 1}] & t\geq 0 \text{ and } t\equiv 3 \bmod 4 \text{ or } t \equiv 0 \bmod 4 \\
    0 & \text{otherwise}
  \end{cases}
\end{equation*}
This gives a complete calculation of $\Pihat_*(\bbF_q)[\eta^{-1}]$ for
the finite fields $\bbF_q$ of odd characteristic.

Theorem \ref{thm:Q} calculates the ring $\Pihat_*(\bbQ)[\eta^{-1}]$. In
particular, the Milnor-Witt stems are
\begin{equation*}
  \Pihat_t(\bbQ)[\eta^{-1}] \cong 
  \begin{cases}
    W(\bbQ)\twocomp[\eta^{\pm 1}] & t=0 \\
    W(\bbQ)\twocomp[\eta^{\pm 1}]/2^{n+1}
    & t \geq 0, t \equiv 3 \bmod 4, n = \nu_2(t+1) \\
    M & t \equiv 0 \bmod 4, t \geq 4 \\
    0 & \text{otherwise}
  \end{cases}
\end{equation*}
where $\nu_2(t+1)$ is the 2-adic valuation of $t+1$ and $M$ is the
submodule of $W(\bbQ)\twocomp[\eta^{\pm 1}]$ defined in definition
\ref{def:M}.

The method of proof for the calculations over $\bbQ$ of theorem
\ref{thm:Q} follows the strategy employed by Ormsby and {\O}stv{\ae}r
in \cite{MotBPInvQ} to calculate the homotopy groups of
$BP\langle n\rangle$ over $\bbQ$. First, for each completion
$\bbQ_{\nu}$ of $\bbQ$ one uses the $\rho$-Bockstein spectral sequence
to calculate $\Ext(\bbQ_{\nu})[h_1^{-1}]$ and then the motivic Adams
spectral sequence to calculate
$\pi_{**}(\unit\twocomp)(\bbQ_{\nu})[\eta^{-1}]$. We next follow the
motivic Hasse principle to identify the differentials in the
$\rho$-Bockstein spectral sequence and the motivic Adams spectral
sequence over $\bbQ$ by comparing these spectral sequences with the
associated spectral sequences over the completions.

The result of Ormsby, R{\"o}ndigs, and {\O}stv{\ae}r,\cite[Proof of
Theorem 1.5]{Vanishing} shows that the vanishing
$\Pihat_t(\bbQ)[\eta^{-1}] = 0$ when $t \equiv 1, 2 \bmod 4$ occurs
systematically for all formally real fields $F$ with
$\cd_2(F[i])<\infty$.  Their calculation is used in this paper to show
that $\Pihat_1(\bbQ)[\eta^{-1}]$ vanishes, as it is unclear whether or
not the motivic Adams spectral sequence over $\bbQ$ converges strongly
in Milnor-Witt stem 1.

Ananyevskiy, Levine, and Panin investigate the $\eta$-inverted sphere
spectrum $\unit[\eta^{-1}]$ in \cite{ALP} over fields $F$ of
characteristic different from 2. They find that the stable homotopy
sheaf $\oplus_{n\in\bbZ}\pi_{n,n}\unit[\eta^{-1}]$ is isomorphic to
the sheaf $\underline{W}[\eta^{\pm 1}]$ where $\underline{W}$ is the
Nisnevich sheaf associated to the presheaf of Witt groups (the Witt
group $W(X)$ of an algebraic variety $X$ is defined by Knebusch in
\cite[Chapter I \S5]{Knebusch}). The consequence of this for
calculating stable homotopy groups is that
\begin{equation*}
  \oplus_{n\in\bbZ}\pi_{n,n}(\unit[\eta^{-1}])(F) \cong W(F)[\eta^{\pm
    1}]
\end{equation*}
for all fields $F$ of characteristic different from 2. In addition to
this absolute statement about the $\eta$-inverted Milnor-Witt 0-stem,
they identify the rationalization of $\unit[\eta^{-1}]$ with an object
in the heart of the homotopy $t$-structure on $\SH(F)$ \cite[Theorem
3.4]{ALP} and find that the sheaf $\pi_{s,w}(\unit[\eta^{-1}]_{\bbQ})$
takes the following form.
\begin{equation*}
  \pi_{s,w}(\unit[\eta^{-1}]_{\bbQ}) = 
  \begin{cases}
    \underline{W}_{\bbQ} & \text{if } s=w \\
    0 & \text{otherwise}
  \end{cases}
\end{equation*}
The calculations in this paper are about the $\eta$-inverted
2-complete sphere spectrum $\unit\twocomp[\eta^{-1}]$ in contrast to
Ananyevskiy, Levine, and Panin's results about $\unit[\eta^{-1}]$ and
$\unit[\eta^{-1}]_{\bbQ}$.

We will follow the grading conventions for
$\Ext(F)=\Ext_{\calA_{**}(F)}(H_{**}(F),H_{**}(F))$ employed by
Guillou and Isaksen in \cite[\S2.1]{GI}. In particular, for a
class $x \in \Ext(F)$ in Adams filtration $f$, stem $s$, and weight
$w$, the Milnor-Witt stem of $x$ is $t = s-w$ and the Chow weight of
$x$ is $c = s +f - 2w$. We will write degrees as $\deg(x) = (f,t,c)$
unless otherwise specified. We will frequently use the isomorphism
$\Ext(\bbC)[h_1^{-1}]\cong \bbF_2[h_1^{\pm 1}, v_1^4, v_2,
v_3,\ldots]$
with $\deg(v_1^4) = (4,4,4)$ and $\deg(v_n) = (1, 2^n-1, 1)$
established in \cite[Theorem 3.4]{GI}, and we adopt the
convention of writing $P$ for $v_1^4$.

\section*{Acknowledgments}  

I would like to thank Dan Isaksen for teaching me how to use Massey
product shuffles, Elden Elmanto for the motivation to write up the
convergence result of section \ref{sec:conv}, Jonas Kylling and
H{\aa}kon Kolderup for their comments on an earlier version of this
work, Oliver R{\"o}ndigs for helpful suggestions, and Paul Arne
{\O}stv{\ae}r for his support and encouragement that made this paper
possible. This paper was partially completed while participating in
the research program ``Algebro-geometric and homotopical methods'' at
the Institut Mittag-Leffler. I gratefully acknowledge support from the
RCN project ``Motivic Hopf Equations,'' no.~250399.

\section{Convergence of the \texorpdfstring{$h_1$}{h1}-inverted motivic Adams spectral sequence}
\label{sec:conv}

We refer the reader to Boardman's notes on spectral sequences
\cite{Boardman} for the terminology concerning the convergence of
spectral sequences. The notion of a map of filtered groups compatible
with a map of spectral sequences is defined by Weibel in
\cite[p. 126]{HBook}.

Consider a collection of cohomologically graded spectral sequences
$(\Ei_r, \di_r)$ for $i\in \bbN$ where $\Ei_r^s = 0 $ for all $s<0$
and all $r$, and each spectral sequence $\Ei$ converges strongly to an
abelian group $\Gi$ filtered by $\Fi_s$. Here $r$ indicates the page
of the spectral sequence and $s$ indicates the internal degree. We
will omit these subscripts and superscripts where it is convenient.
These assumptions on our spectral sequences precisely mean the
following conditions hold.
\begin{enumerate}
    \item The filtration is exhaustive with $\Fi_0 = \Gi$,
  \begin{equation*}
    \Gi = \Fi_0 \supseteq \Fi_1 \supseteq \Fi_2 \supseteq \cdots
  \end{equation*}
  
    \item For all $s\in\bbZ$ there are isomorphisms
  $\Ei_{\infty}^s \cong \Fi_s/\Fi_{s+1}$.

    \item The filtration is Hausdorff $\cap_j \Fi_j = 0$.

    \item The filtration is complete
  $\InverseLimit_j \Gi/\Fi_j \cong \Gi$.
\end{enumerate}
We assume the spectral sequences are $\bbZ^k$-graded in addition to
the internal grading $s$. In practice, $k$ is either $1$ or $2$.

Let $\presup{i}f : \Ei \to \presup{i+1}\rmE$ be a directed system of
maps of spectral sequences compatible with
$\presup{i}g : \presup{i}\rmG \to \presup{i+1}\rmG$ of degree
$a\in\bbZ^k$. The colimit of the directed system
$\presup{i}f : \Ei \to \presup{i+1}\rmE$ is again a spectral sequence
$\widetilde{E} = \colim \Ei$ that converges weakly to
$\widetilde{G} = \colim \Gi$ filtered by
$\widetilde{F}_j = \colim \Fi_j$. Under what circumstances can we
guarantee $\widetilde{E}$ converges strongly to $\widetilde{G}$?

The filtration $\widetilde{F}_j$ of $\widetilde{G}$ may fail to be
Hausdorff or complete. To see how the Hausdorff condition may fail,
consider $\Ei = \bbZ$, $\Gi = \bbZ$, and $\Fi_j = 0 $ for $j\geq i$
and $\Fi_j = \Gi$ for $j<i$. If we define maps $\presup{i}f = 0$ and
$\presup{i}g(1) = 1$, then $\widetilde{E} = 0$,
$\widetilde{G} = \bbZ$, and $\widetilde{F}_j = \bbZ$ for all
$j\geq 0$. The issue is that the class $1 \in \Gi$ is in filtration
$i$, which shows $1 \in \widetilde{G}$ is in filtration $i$ for all
natural numbers $i$.

Completeness can fail if each $\Gi$ has a finite filtration but the
colimit $\widetilde{G}$ has an infinite filtration. Consider
$\Gi = \bbZ$, $\Fi_j = 2^j\bbZ$ for $j \leq i$ and $\Fi_j = 0$ for
$j > i$ with the map $\presup{i}g(1) = 1$. Then
$\widetilde{G} = \bbZ$, yet
$\widetilde{E}^s \cong \widetilde{F}_s/\widetilde{F}_{s+1} \cong
\bbZ/2$
for all $s \geq 0$ and $\InverseLimit_s \widetilde{G}/\widetilde{F}_s$
is isomorphic to the $2$-adic integers $\bbZ_2$.

\begin{definition}
  Consider a directed system of $\bbZ^k$-graded spectral sequences
  $\presup{i}f : \presup{i}\rmE \to \presup{i+1}\rmE$ of degree $a$,
  that is, for all $i\in \bbN$ and $x\in\Ei$ the degree of
  $\presup{i}f(x)$ is $a + \deg(x)$, and $\presup{i}f$ does not change
  the internal degree $s$. The directed system $\presup{i}f$ has a
  horizontal vanishing line of height $N$ in the direction $a$ if for
  any degree $b$ there exists $K\in\bbN$ for which the groups
  $\Ei^{s,b+ia}$ vanish for all $i > K$ and $s > N$.
\end{definition}

The term horizontal vanishing line comes from the special case where
for all $i$ we have $\presup{i}E=E$ and $E= \oplus_{s,p}E^{s,p}$ is a
$\bbZ$-graded spectral sequence in $p$ with internal degree $s$.
If one makes a chart for $\presup{i}E$ where the vertical axis is the
internal degree $s$ and the horizontal axis is $p$, a horizontal
vanishing line of height $N$ in the direction $1$ says that $E^{s,p}$
vanishes when $(s,p)$ is above the horizontal line $s=N$ and $p$ is
sufficiently large.

\begin{proposition}
  \label{prop:ss_convergence}
  Suppose $\presup{i}f : \Ei \to \presup{i+1}\rmE$ is a directed
  system of $\bbZ^k$-graded spectral sequences of degree $a$ with a
  horizontal vanishing line of height $N$ in the direction $a$. The
  colimit spectral sequence $\widetilde{E}$ then converges strongly to
  $\widetilde{G}$ with respect to the filtration $\widetilde{F}$.
\end{proposition}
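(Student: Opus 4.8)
The plan is to verify the four conditions listed above that together constitute strong convergence of $\widetilde{E}$ to $\widetilde{G}$ with respect to $\widetilde{F}$. Conditions~(1) and~(2) carry no real information: condition~(1) holds because $\Fi_0 = \Gi$ for all $i$ forces $\widetilde{F}_0 = \colim_i \Fi_0 = \colim_i \Gi = \widetilde{G}$, and condition~(2), the isomorphism $\widetilde{E}_{\infty}^s \cong \widetilde{F}_s/\widetilde{F}_{s+1}$, is precisely the weak convergence of the colimit spectral sequence already recorded above. So all the content sits in the Hausdorff condition~(3) and the completeness condition~(4), and I would deduce both from a single finiteness statement about the colimit filtration.

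The heart of the argument is the \emph{key claim} that for every multidegree $b \in \bbZ^k$ one has $\widetilde{F}_s^{\,b} = 0$ for all $s > N$, where $N$ is the height of the horizontal vanishing line. Granting this, condition~(3) is immediate since $\bigcap_s \widetilde{F}_s^{\,b} \subseteq \widetilde{F}_{N+1}^{\,b} = 0$; and condition~(4) is immediate since the tower $\{\widetilde{G}^b/\widetilde{F}_s^{\,b}\}_s$ is then eventually constant equal to $\widetilde{G}^b$, so the canonical map $\widetilde{G}^b \to \InverseLimit_s \widetilde{G}^b/\widetilde{F}_s^{\,b}$ is an isomorphism (and the associated $\InverseLimit^1$ vanishes). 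Put differently, a horizontal vanishing line forces each filtration $\widetilde{F}_\bullet$ to be finite in every degree, and for a finite filtration the remaining conditions are automatic.

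To establish the key claim I would argue as follows. Fix $b$ and take $K\in\bbN$ as in the definition of the vanishing line, so that $\Ei^{s,\,b+ia} = 0$ whenever $i > K$ and $s > N$; since a vanishing on any page passes to all later pages and hence to $E_{\infty}$, we may read this as $\Ei_{\infty}^{s,\,b+ia} = 0$ for $i > K$ and $s > N$. Fix such an $i$. Strong convergence of $\Ei$ gives, in each multidegree $c$, an isomorphism $\Ei_{\infty}^{s,\,c} \cong (\Fi_s)^c/(\Fi_{s+1})^c$, where $(\Fi_s)^c$ denotes the multidegree-$c$ summand of $\Fi_s$; taking $c = b+ia$ shows $(\Fi_s)^{b+ia} = (\Fi_{s+1})^{b+ia}$ for every $s > N$, so all these stages coincide with one group $V$. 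As $\Fi$ is decreasing and Hausdorff, $V = \bigcap_{s>N}(\Fi_s)^{b+ia} = \bigcap_{s\geq 0}(\Fi_s)^{b+ia} = 0$, hence $(\Fi_s)^{b+ia} = 0$ for all $s > N$. Since this holds for every $i > K$, the filtered colimit $\widetilde{F}_s^{\,b} = \colim_i (\Fi_s)^{b+ia}$ vanishes for $s > N$, which is the claim.

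I expect the only delicate points to be the bookkeeping with the degree shift $a$ in forming the colimit — so that ``$\widetilde{E}$ in multidegree $b$'' is computed from ``$\Ei$ in multidegree $b+ia$'' — and the observation that the vanishing-line hypothesis, stated on some page, transports to $E_{\infty}$ and so interacts with the strong convergence of each individual $\Ei$. Beyond that there is no substantial obstacle: once the hypothesis is in place it collapses each colimit filtration to a finite one, and completeness and Hausdorffness follow formally.
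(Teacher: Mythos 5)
Your proof is correct, and it overlaps with the paper's argument on one half while genuinely simplifying the other. For completeness the two arguments share the same engine: for $i > K$ the vanishing $\Ei_{\infty}^{s,\,b+ia} = 0$ for $s > N$, combined with the Hausdorff property of each $\Fi$, forces $(\Fi_s)^{b+ia} = 0$ for all $s > N$, so each $\Gi^{b+ia}$ carries a finite filtration; the paper then concludes by commuting the finite inverse limit $\InverseLimit_j \Gi/\Fi_j$ past the directed colimit, whereas you push the termwise vanishing through the colimit (using that $\{i > K\}$ is cofinal and filtered colimits are exact, so $\widetilde{F}_s^{\,b} = \colim_{i>K}(\Fi_s)^{b+ia} = 0$ for $s > N$) and read off completeness from the eventually constant tower. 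Where you truly diverge is the Hausdorff condition: the paper proves it via a detection argument (Lemma \ref{lem:existence}), producing a representative $\presup{i}x$ of a nonzero class whose detecting element survives all the maps $\presup{i}f$, and concluding that the class sits in a well-defined finite filtration; you instead get Hausdorffness for free from your key claim, since $\bigcap_s \widetilde{F}_s^{\,b} \subseteq \widetilde{F}_{N+1}^{\,b} = 0$. Your route is shorter and makes the detection lemma unnecessary for this proposition (the paper's version does record the finer fact of exactly which filtration detects a given nonzero class, but that is not needed for strong convergence). The only step worth spelling out carefully in your write-up is the identification $\widetilde{F}_s^{\,b} = \colim_i (\Fi_s)^{b+ia}$ as a subgroup of $\widetilde{G}^{\,b}$, which is the degree-shift bookkeeping you already flag and which holds because the maps $\presup{i}g$ are filtration-preserving of degree $a$ and filtered colimits preserve injections.
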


\begin{proof}
  We first show the filtration $\widetilde{F}$ of $\widetilde{G}$ is
  complete. For $b \in \bbZ^k$, the degree $b$ component of
  $\widetilde{G}$ is
  \begin{equation*}
    \widetilde{G}^{b} = \colim( \presup{0}G^{b} \to \presup{1}G^{b+a} \to \cdots \to \Gi^{b+ia}\to \cdots).
  \end{equation*}
  The assumption that there is a horizontal vanishing line of height
  $N$ in the direction $a$ implies for all $i> K$ the filtration of
  $\presup{i}\rmG^{b+ia}$ is finite. This is because the filtration of
  $\presup{i}\rmG$ is Hausdorff and $\presup{i}E^{s, b+ia}$ vanishes
  for $s>N$, so $\presup{i}\rmF_j$ is trivial for $i >K$ and $j >N$.
  Since finite limits and directed colimits commute, it follows that
  \begin{align*}
    \InverseLimit_j(\widetilde{G}^{b}
    / \widetilde{F}_j^{b} )
    & \cong \InverseLimit_j(\colim_{i>K} \presup{i}\rmG/
      \presup{i}\rmF_j )\\
    & \cong \colim_{i>K} (\InverseLimit_j  \presup{i}\rmG/
      \presup{i}\rmF_j)\\
    & \cong \colim_{i>K} \presup{i}\rmG^{b+ia} \\
    & \cong \widetilde{G}^b.
  \end{align*}

  We now show the filtration $\widetilde{F}_j$ of $\widetilde{G}$ is
  Hausdorff. Let $x \in \widetilde{G}^b$ be a nonzero element. Lemma
  \ref{lem:existence} shows there is some $\presup{i}x \in \Gi^{b+ia}$
  which maps to $x \in \widetilde{G}^b$ for which $\presup{i}x$ is
  detected by $\presup{i}y \in \Ei^{s,b+ia}_r$ and $\presup{i+k+1}y$
  is nonzero for all $k\in\bbN$. Since $\presup{i}f$ is compatible
  with $\presup{i}g$, it follows that $\presup{i+k+1}y$ detects
  $\presup{i+k+1}x = \presup{i+k}g\circ\cdots \circ
  \presup{i}g(\presup{i}x)$
  for all $k\in \bbN$. Furthermore, $\presup{i+k}x\in\presup{i+k}G$ is
  nonzero for all $k\in \bbN$ and so $\presup{i+k}y$ survives to
  $\presup{i+k}E_{\infty}^{s,b+(i+k)a}$. Our assumption that the
  spectral sequences $\Ei$ converge strongly to $\Gi$ means that
  \begin{equation*}
    \presup{i+k}E_{\infty}^{s,b+(i+k)a} \cong \presup{i+k}\rmF_s / \presup{i+k}\rmF_{s+1}.
  \end{equation*}
  Hence every class $\presup{i+k}x$ is in filtration $s$ but not
  $s+1$, so that $x \in \widetilde{F}_s$ but
  $x \notin \widetilde{F}_{s+1}$. It now follows that the filtration
  $\widetilde{F}$ of $\widetilde{G}$ is Hausdorff. 
\end{proof}

\begin{lemma}
  \label{lem:existence}
  Under the conditions of proposition \ref{prop:ss_convergence},
  consider a nonzero element $x \in \widetilde{G}^b$. There exists
  some $\presup{i}x \in \Gi^{b+ia}$ that maps to
  $x \in \widetilde{G}^b$ for which $\presup{i}x$ is detected by
  $\presup{i}y \in \Ei^{s,b+ia}_r$ and
  \begin{equation*}
    \presup{i+k+1}y 
    = \presup{i+k}f\circ\cdots \circ \presup{i}f(\presup{i}y) \in \presup{i+k+1}E^{s, b+(i+k+1)a}_r
  \end{equation*}
  is nonzero for all $k\in\bbN$.
\end{lemma}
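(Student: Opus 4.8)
The plan is to track the filtration degree of a fixed preimage of $x$ as it moves along the directed system, argue that this filtration degree eventually stabilizes, and then push a detecting class forward through the maps $\presup{i}f$, using their compatibility with the $\presup{i}g$ to see that it stays nonzero. The only genuinely delicate point will be the stabilization of the filtration degrees and the bookkeeping that accompanies it.

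First I would choose a starting preimage. Since $x$ is a nonzero element of $\widetilde{G}^b = \colim_i \presup{i}G^{b+ia}$, there is an index $i_0$ and a class $\presup{i_0}x \in \presup{i_0}G^{b+i_0a}$ mapping to $x$; for $j \geq i_0$ set $\presup{j}x = \presup{j-1}g \circ \cdots \circ \presup{i_0}g(\presup{i_0}x) \in \presup{j}G^{b+ja}$, so that, because $x\neq 0$, every $\presup{j}x$ is nonzero. Each filtration $\presup{j}F_{\bullet}$ is exhaustive with $\presup{j}F_0 = \presup{j}G$ and Hausdorff with $\bigcap_n \presup{j}F_n = 0$, so the integer $s_j := \max\{\, n : \presup{j}x \in \presup{j}F_n^{b+ja}\,\}$ is well defined; that is, $\presup{j}x$ lies in $\presup{j}F_{s_j}^{b+ja}$ but not $\presup{j}F_{s_j+1}^{b+ja}$.

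Next I would show $(s_j)_{j\geq i_0}$ is eventually constant. Since $\presup{j}f$ is a map of spectral sequences compatible with $\presup{j}g$, the map $\presup{j}g$ preserves filtration, so $\presup{j+1}x = \presup{j}g(\presup{j}x) \in \presup{j+1}F_{s_j}$ and hence $s_{j+1}\geq s_j$: the sequence is non-decreasing. For an upper bound, the horizontal vanishing line gives $K\in\bbN$ with $\presup{i}E^{s,b+ia} = 0$ for all $i>K$ and all $s>N$; since $\presup{i}E$ converges strongly, $\presup{i}E_{\infty}^{s,b+ia} \cong \presup{i}F_s^{b+ia}/\presup{i}F_{s+1}^{b+ia}$ vanishes for $s>N$ as well, and Hausdorffness then forces $\presup{i}F_{N+1}^{b+ia}=0$ once $i>K$. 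Therefore $s_j\leq N$ for all $j>\max(K,i_0)$, and a non-decreasing sequence of natural numbers bounded above by $N$ is eventually constant. Fix $i>\max(K,i_0)$ with $s_j = s := s_i$ for all $j\geq i$.

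Finally I would produce the detecting class and its images. Because $\presup{i}x \in \presup{i}F_s^{b+ia}\setminus \presup{i}F_{s+1}^{b+ia}$ and $\presup{i}E$ converges strongly, $\presup{i}x$ is detected by $\presup{i}y\in\presup{i}E_r^{s,b+ia}$ for some page $r$; that is, $\presup{i}y$ is a nonzero permanent cycle whose image in $\presup{i}E_{\infty}^{s,b+ia}\cong \presup{i}F_s^{b+ia}/\presup{i}F_{s+1}^{b+ia}$ is the class of $\presup{i}x$. Put $\presup{i+k+1}y = \presup{i+k}f\circ\cdots\circ\presup{i}f(\presup{i}y) \in \presup{i+k+1}E_r^{s,b+(i+k+1)a}$, which makes sense since each $\presup{j}f$ is a map of spectral sequences, hence defined on page $r$ and preserving the internal degree. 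Compatibility of $\presup{j}f$ with $\presup{j}g$ means $\presup{i+k+1}y$ detects $\presup{i+k+1}x = \presup{i+k}g\circ\cdots\circ\presup{i}g(\presup{i}x)$; but $\presup{i+k+1}x$ has filtration exactly $s$ because $s_{i+k+1}=s$, so its class in $\presup{i+k+1}F_s/\presup{i+k+1}F_{s+1}$ is nonzero and therefore $\presup{i+k+1}y\neq 0$. These $\presup{i}x$, $\presup{i}y$, and $r$ satisfy the conclusion. The step I expect to take the most care is this last transport: it is the compatibility of $\presup{j}f$ with $\presup{j}g$ — carrying the nonzero $E_{\infty}$-class that detects a filtration-$s$ element to the nonzero $E_{\infty}$-class that detects its image — rather than any direct property of $\presup{j}f$, that keeps each $\presup{i+k+1}y$ nonzero, and it relies crucially on the fact that the filtration degrees have already been shown to stabilize.
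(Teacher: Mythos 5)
Your proposal is correct and follows essentially the same argument as the paper: push a preimage of $x$ forward, observe its filtration degree is non-decreasing (by compatibility of $\presup{i}f$ with $\presup{i}g$) and bounded above by $N$ (by the vanishing line), hence eventually constant, and then transport the detecting class. Your write-up is more detailed than the paper's, in particular in justifying the upper bound $s_j \leq N$ via Hausdorffness and in spelling out why the pushed-forward detecting classes stay nonzero, but the underlying strategy is identical.
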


\begin{proof}
  There is some $\presup{j}x \in \presup{j}\rmG$ which maps to
  $x\in\widetilde{G}$. The classes
  $\presup{j+k}x=\presup{j+k-1}g\circ\cdots \circ
  \presup{j}g(\presup{j}x)$
  are non-zero for all $k\geq 1$ and are therefore detected by some
  class $\presup{j+k}y \in \presup{j+k}\rmE^{s_k,*}$. The vanishing
  line implies $s_k \leq N$ for $k$ sufficiently large and the
  compatibility of the maps $\presup{j}f$ with $\presup{j}g$ implies
  $s_k$ is a non-decreasing function of $k$. Hence $s_k$ is eventually
  constant, say for all $k+j \geq i$. Then $\presup{i}x$ has the
  desired property.
\end{proof}

These results can be applied to inverting multiplication by $h_1$ in
the motivic Adams spectral sequence at the prime $2$ after re-indexing
the filtration. For a field $F$, write $\Ei^f$ for
$\MASS(F)^{f+i}$. Here $f$ is the internal degree of the spectral
sequence (``f'' for ``filtration''). With this convention, the maps
$h_1 \cdot : \Ei \to \presup{i+1}\rmE$ form a directed system of
spectral sequences which is compatible with the maps
$\eta : \presup{i}\rmG \to \presup{i+1}\rmG$ where
$\presup{i}F_j = F_{j+i}(\pi_{**}(\unit\twocomp))$. The degree of
multiplication by $\eta$ is $(s,w)=(1,1)$ where $s$ is the stem and
$w$ the weight. A horizontal vanishing line of height $N$ in the
direction $(1,1)$ is equivalent to the following condition. For any
$(s,w)$ there exists $k$ so that for all $i >k$ and $f>N$ the group
$\MASS(F)^{f+i,s+i,w+i}$ vanishes. In the usual manner of drawing a
chart for $\MASS(F)$, such as those made by Isaksen \cite{Charts}, the
horizontal vanishing line for the system $\Ei^f$ is transformed into a
line of slope 1.

Such vanishing conditions occur over $\bbR$ in positive Milnor-Witt
stems as proved by Guillou and Isaksen \cite[Lemma 5.1]{GI-Real}. Over
$\bbR$ it suffices to take $N=1$, but one must take larger values for
other fields. For fields of cohomological dimension at most $2$ and
number fields, take $N=3$ for the positive Milnor-Witt stems.

\begin{corollary}
  \label{cor:convergence}
  The $h_1$-inverted motivic Adams spectral sequence over fields of
  cohomological dimension at most $2$ and the field of rational
  numbers $\bbQ$ converges strongly to
  $\pi_{s,w}(\unit\twocomp)[\eta^{-1}]$ when $s-w >1$.
\end{corollary}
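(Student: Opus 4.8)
The plan is to deduce the corollary from Proposition \ref{prop:ss_convergence}, applied to the directed system of spectral sequences $h_1 \cdot : \Ei \to \presup{i+1}\rmE$, $\Ei^{f} = \MASS(F)^{f+i}$, set up above. That system is compatible with $\eta$-multiplication on $\pi_{**}(\unit\twocomp)(F)$, and the colimit of its abutments is $\colim(\pi_{**}(\unit\twocomp)(F) \xrightarrow{\eta} \pi_{**}(\unit\twocomp)(F) \xrightarrow{\eta} \cdots) \cong \pi_{**}(\unit\twocomp)(F)[\eta^{-1}]$. Thus, once the hypotheses of Proposition \ref{prop:ss_convergence} are verified, $\MASS(F)[h_1^{-1}]$ converges strongly to $\pi_{**}(\unit\twocomp)(F)[\eta^{-1}]$; restricting to Milnor-Witt stem $s-w>1$, and using that $\eta$ acts invertibly on the target so that the conclusion for a single representative bidegree of each Milnor-Witt stem propagates to all of them, yields the statement.

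There are two hypotheses to check. First, each $\Ei = \MASS(F)$, suitably filtered, must converge strongly to $\pi_{**}(\unit\twocomp)(F)$. For every field $F$ in the statement the quantity $\cd_2(F[\sqrt{-1}])$ is finite --- it is at most $\cd_2(F) \leq 2$ when $\cd_2(F) \leq 2$, and equals $2$ for $\bbQ(\sqrt{-1})$ --- so the Hu, Kriz, and Ormsby identification $\unit\Hcomp \simeq \unit\twocomp$ \cite[Theorem 1]{HKO-Convergence} (together with \cite[Proposition 5.10]{WO} in positive characteristic) makes $\MASS(F)$ conditionally convergent to $\pi_{**}(\unit\twocomp)(F)$, and strong convergence then holds, at least in the Milnor-Witt stems $s-w>1$ that we use, by the usual finiteness argument. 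Second --- and this is the crux --- the directed system must carry a horizontal vanishing line of height $N=3$ in the direction $(1,1)$ over the positive Milnor-Witt stems: for each Milnor-Witt stem $t>1$ one must exhibit a representative bidegree $(s_0,w_0)$ with $s_0-w_0=t$, and a bound $K$, for which the $E_\infty$-term of $\MASS(F)$ in tridegree $(f+i,\,s_0+i,\,w_0+i)$ vanishes whenever $f>3$ and $i>K$.

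I would establish this vanishing line by reducing along $\rho$-Bockstein spectral sequences to the algebraically closed case. Over the algebraic closure $\Fbar$ one has $\Ext(\Fbar)[h_1^{-1}] \cong \Ext(\bbC)[h_1^{-1}] \cong \bbF_2[h_1^{\pm 1},P,v_2,v_3,\ldots]$ by \cite[Theorem 3.4]{GI}, and the $h_1$-inverted motivic Adams differentials of \cite{GI,AM} collapse this to $E_\infty \cong \bbF_2[h_1^{\pm 1},\mu,\varepsilon]/(\varepsilon^2)$; since $P$, the classes $v_n$, $\mu$, and $\varepsilon$ each lie in a single Chow weight, in every nonzero Milnor-Witt stem this $E_\infty$-term is a single $h_1$-tower concentrated in one Chow weight, and a direct count of degrees bounds its reindexed filtration by $3$. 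For a field $F$ with $\cd_2(F) \leq 2$ one then runs the $\rho$-Bockstein spectral sequence, whose input is $\Ext(\Fbar)[h_1^{-1}][\rho]$: the hypothesis $\cd_2(F) \leq 2$ bounds the surviving $\rho$-power contributions, so that the $E_\infty$-term of the $h_1$-inverted $\MASS(F)$ stays confined, in each positive Milnor-Witt stem, to a band of Chow weights whose top member sits at reindexed filtration at most $3$ (the same analysis over $\bbR$ gives $N=1$, which is \cite[Lemma 5.1]{GI-Real}). For $F = \bbQ$ one appeals instead to the motivic Hasse principle: $\Ext(\bbQ)[h_1^{-1}]$ and the Adams differentials over $\bbQ$ are detected on the completions $\bbR$ and $\bbQ_p$, each of which falls under one of the two preceding cases, so the height-$3$ bound transports to $\bbQ$.

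The main obstacle is precisely this last point --- justifying the finite height $N=3$ of the vanishing line over fields of cohomological dimension at most $2$ and over $\bbQ$. It depends on the explicit $\rho$-Bockstein computation of $\Ext(F)[h_1^{-1}]$, on the determination of the $h_1$-inverted motivic Adams differentials over these fields, and, for $\bbQ$, on the Hasse-principle comparison with the completions --- exactly the computations carried out in the body of the paper --- so in effect the corollary follows once those are in hand. By comparison, the bookkeeping supplied by Proposition \ref{prop:ss_convergence} and the strong convergence of the uninverted $\MASS(F)$ are routine.
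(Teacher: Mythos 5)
Your overall architecture is the same as the paper's: apply Proposition \ref{prop:ss_convergence} to the directed system $\Ei^f = \MASS(F)^{f+i}$, get conditional convergence of each term from \cite[Theorem 1]{HKO-Convergence}, and reduce the vanishing-line hypothesis to the algebraically closed case (resp.\ to the completions of $\bbQ$) via the $\rho$-Bockstein spectral sequence. The gap is in how you propose to verify the horizontal vanishing line. That hypothesis is a statement about the individual terms of the directed system \emph{before} localization: the groups $\MASS(F)^{f+i,\,s_0+i,\,w_0+i}$ must vanish for $f>3$ and $i>K$. You propose to read this off from the $E_\infty$-page of the $h_1$-\emph{inverted} spectral sequence over $\Fbar$, namely $\bbF_2[h_1^{\pm1},\mu,\varepsilon]/(\varepsilon^2)$, by ``a direct count of degrees.'' But that object is the colimit over $i$ of the groups in question, and vanishing of a colimit of $\bbF_2$-vector spaces does not force the terms to vanish for large $i$ (consider $\bbF_2\xrightarrow{0}\bbF_2\xrightarrow{0}\cdots$); worse, the hypothesis you are verifying is exactly what is needed to make the localized $E_\infty$-page compute anything, so bounding the localized $E_\infty$-page is essentially circular. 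The correct object to bound is the unlocalized $E_2$-page $\Ext(F)$, which the paper handles via the $E_1$-page $H_{**}(F)\otimes_{\bbF_2[\tau]}\Ext(\bbC)$ of the $\rho$-Bockstein spectral sequence: the known vanishing region for the uninverted $\Ext(\bbC)$ together with the weight bound on $H_{**}(F)$ coming from $\cd_2(F)\le 2$ gives the line with $N=3$ on $E_1$, hence on $\Ext(F)$ and on every later Adams page. Verifying the line on $E_2$ also supplies what you wave away as ``the usual finiteness argument'' for strong convergence of the uninverted $\MASS(F)$ (only finitely many nonzero differentials in each relevant degree); a bound on $E_\infty$ alone would not.

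A secondary divergence: for $\bbQ$ you invoke a Hasse-principle detection statement, whereas the paper adapts \cite[Lemma 5.1]{GI-Real} directly, using that the $\rho$-inverted Hopf algebroid over $\bbQ$ is isomorphic to the one over $\bbR$ and accounting for the extra factors $\alpha\in H_{i,i}(\bbQ)$, $i\le 2$, not divisible by $\rho$. A detection argument could be made to work at the level of the Bockstein $E_1$-page, where injectivity is Milnor's theorem on $k_*^M(\bbQ)$ and the bound $N=3$ is uniform over the completions, but as you state it the argument inherits the same problem of being phrased in terms of localized $E_\infty$-pages.
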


\begin{proof}
  Consider the directed system $\Ei^f = \MASS(F)^{f+i}$ with maps
  $h_1 \cdot : \Ei \to \presup{i+1}\rmE$ as described above. With
  $N=3$, the vanishing conditions required for proposition
  \ref{prop:ss_convergence} are satisfied for fields $F$ of
  cohomological dimension at most 2. The $\rho$-Bockstein spectral
  sequence for such a field has $E_1$-page
  $H_{**}(F) \otimes_{\bbF_2[\tau]} \Ext(\bbC)$ and converges off to
  $\Ext(F)$. The $E_1$ page of the $\rho$-Bockstein spectral sequence
  has the claimed vanishing line in positive Milnor-Witt stem; hence
  $\Ext(F)$ does too. 

  An argument similar to the one given by Guillou and Isaksen in
  \cite[Lemma 5.1]{GI-Real} establishes a vanishing line over $\bbQ$
  in positive Milnor-Witt stems with $N=3$. Their choice of $A$ works
  just as well over $\bbQ$ ($A$ corresponds to $k$ when $s=0$ in the
  notation above) because the $\rho$-inverted Hopf algebroid
  $(H_{**}(\bbQ)[\rho^{-1}], \calA_{**}(\bbQ)[\rho^{-1}])$ is
  isomorphic to the $\rho$-inverted Hopf algebroid over $\bbR$. Their
  argument with the $\rho$-Bockstein spectral sequence must only be
  modified to account for $y$ being of the form $y = \alpha \tilde{y}$
  with $\tilde{y} \in \Ext(\bbC)$ and $\alpha\in H_{i,i}(\bbQ)$ with
  $i\leq 2$ and $\alpha$ is not divisible by $\rho$.

  The motivic Adams spectral sequence for fields $F$ with
  $\cd_2(F)\leq 2$ and $\bbQ$ converges conditionally to
  $\pi_{**}(\unit\twocomp)$ by \cite[Theorem 1]{HKO-Convergence} of
  Hu, Kriz, and Ormsby. The vanishing line described above ensures
  that it also converges strongly in Milnor-Witt stem at least 2, as
  in such degrees $d_r =0$ for $r$ sufficiently large. Hence we get
  the convergence result of the $h_1$-inverted Adams spectral
  sequences.
\end{proof}

\section{Fields of cohomological dimension at most 2}

Let $F$ be a field of 2-cohomological dimension at most 2. The mod 2
Milnor $K$-theory of such a field satisfies $k_n^M(F) = 0$ for
$n\geq 3$.  We first calculate $\Ext(F)[h_1^{-1}]$ using the
$\rho$-Bockstein spectral sequence and then observe that the structure
of $\MASS(F)_2 \cong \Ext(F)[h_1^{-1}]$ forces the $h_1$-inverted
motivic Adams spectral sequence to collapse at the $E_2$ page. See
figures \ref{fig:cd2BSS-E1} and \ref{fig:cd2BSS-Einfty} for a
depiction of the $\rho$-Bockstein spectral sequence $E_1$ and
$E_{\infty}$ page up to Milnor-Witt stem 24.

\begin{proposition}
  \label{prop:F-ext}
  For $F$ a field with $\cd_2(F) \leq 2$, the $E_2$ page of
  $\MASS(F)[h_1^{-1}]$ is
  \begin{equation*}
    \Ext(F)[h_1^{-1}] \cong k^M_*(F) 
    \otimes \Ext(\bbC)[h_1^{-1}].
  \end{equation*}
\end{proposition}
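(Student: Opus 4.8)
The plan is to compute $\Ext(F)[h_1^{-1}]$ directly from the $\rho$-Bockstein spectral sequence, whose $E_1$-page is $H_{**}(F) \otimes_{\bbF_2[\tau]} \Ext(\bbC)$ converging to $\Ext(F)$, and then invert $h_1$. Since $\cd_2(F) \leq 2$, the mod $2$ Milnor $K$-theory satisfies $k_n^M(F) = 0$ for $n \geq 3$, so $H_{**}(F)$ is concentrated in weights $0,1,2$; concretely $H_{**}(F)[\rho^{-1}]$ and the structure of $H_{**}(F)$ as a module over $\bbF_2[\tau,\rho]$ are controlled by $k_*^M(F)$. First I would invert $h_1$ on the whole Bockstein spectral sequence — this is legitimate since $h_1$ is a permanent cycle and multiplication by $h_1$ is compatible with the $\rho$-Bockstein differentials — obtaining a spectral sequence with $E_1$-page $H_{**}(F) \otimes_{\bbF_2[\tau]} \Ext(\bbC)[h_1^{-1}]$ converging to $\Ext(F)[h_1^{-1}]$. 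Using $\Ext(\bbC)[h_1^{-1}] \cong \bbF_2[h_1^{\pm 1}, v_1^4, v_2, v_3, \ldots]$ from \cite[Theorem 3.4]{GI}, the $E_1$-page becomes $H_{**}(F) \otimes_{\bbF_2[\tau]} \bbF_2[h_1^{\pm 1}, P, v_2, v_3, \ldots]$.

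The key computational step is to identify the $\rho$-Bockstein differentials after inverting $h_1$. In the $h_1$-inverted setting, the relation $\tau h_1^{?}$-type differentials collapse much of the $E_1$-page: the point is that after inverting $h_1$, the class $\tau$ becomes a boundary (this is the $h_1$-inverted analogue of the fact that $\rho$-Bockstein differentials kill $\tau$-towers), so the surviving classes are precisely those built from $\rho$-multiples modulo $\tau$, i.e. from $k_*^M(F) = \bbF_2[\rho]/(\text{stuff})$-type contributions together with the $h_1^{\pm 1}, P, v_n$ generators. I expect that after running all differentials, the $E_\infty$-page is $k_*^M(F) \otimes \bbF_2[h_1^{\pm 1}, P, v_2, v_3, \ldots] = k_*^M(F) \otimes \Ext(\bbC)[h_1^{-1}]$, with the grading matching: $\rho \in k_1^M(F)$ contributes in Chow weight appropriately, and the vanishing $k_n^M(F) = 0$ for $n \geq 3$ truncates the $\rho$-power tower. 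The differentials on $v_n$ and $P$ must be checked to vanish in the $h_1$-inverted range — this follows because the corresponding differentials over $\bbR$ (computed by Guillou–Isaksen in \cite{GI-Real}) already vanish, and the $\rho$-Bockstein differentials over $F$ are governed by the same formulas pulled back along $\bbR \to \text{(relevant structure)}$, or more precisely are forced by the algebra structure and sparseness of the $E_1$-page.

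Finally, I would resolve the extension problem: the associated graded $k_*^M(F) \otimes \Ext(\bbC)[h_1^{-1}]$ must be shown to be the actual answer, not just its associated graded. Here one uses that $k_*^M(F)$ is an $\bbF_2$-algebra and the extensions are over $\bbF_2$, so there are no additive extension problems; the multiplicative structure is pinned down by noting that $\rho^2$ acts as $\rho \cdot \rho$ which vanishes when $\cd_2 \leq 2$ forces the relevant product into $k_3^M(F) = 0$ — though one should be careful, as $\rho$ itself need not square to zero in $k_*^M(F)$ (e.g. $\rho^2 \neq 0$ can happen), so the precise statement is that the tensor decomposition respects the ring structure because $\Ext(\bbC)[h_1^{-1}]$ is a polynomial algebra on classes that lift compatibly. \textbf{The main obstacle} I anticipate is verifying that no $\rho$-Bockstein differentials hit or emanate from the polynomial generators $P, v_2, v_3, \ldots$ after inverting $h_1$ — i.e. that the spectral sequence degenerates to the expected form — since a priori there could be exotic differentials mixing the Milnor $K$-theory classes with the $v_n$'s; I would handle this by a degree/sparseness argument using the Chow weight grading $c = s + f - 2w$ together with comparison to the known real and complex computations.
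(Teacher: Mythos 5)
Your overall strategy (run the $h_1$-inverted $\rho$-Bockstein spectral sequence with $E_1$-page built from $k^M_*(F)\otimes\Ext(\bbC)[h_1^{-1}]$ and show it collapses) is the paper's, but the one step that actually carries the proof --- the vanishing of all differentials --- is exactly the step you leave open, and the concrete justification you do offer is false. You assert the differentials on $P$ and the $v_n$ vanish ``because the corresponding differentials over $\bbR$ already vanish.'' They do not: over $\bbR$ one has $d_{2^n-1}(v_1^{2^n})=\rho^{2^n-1}h_1^{2^n}v_n$ (Guillou--Isaksen, \cite[Lemma 3.1]{GI-Real}), so in particular $d_3(P)=\rho^3h_1^4v_2\neq 0$ there. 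Moreover, the fields at issue ($\bbF_q$, $\bbQ_p$, any $F$ with $\cd_2(F)\leq 2$) are not formally real, so no comparison with $\bbR$ is available in either direction. The actual reason the spectral sequence collapses is a congruence on the quantity $t+c$: every generator of $\Ext(\bbC)[h_1^{-1}]$ satisfies $t+c\equiv 0\bmod 4$ (indeed $h_1^{\pm1}$ has $(f,t,c)=(\pm1,0,0)$, $P$ has $(4,4,4)$, and $v_n$ has $(1,2^n-1,1)$ with $t+c=2^n$), a class of $k^M_n(F)$ contributes $t+c=n\leq 2$, and $d_r$ has degree $(r,1,-1,0)$ in $(\epsilon,f,t,c)$; hence a differential on a generator of $\Ext(\bbC)[h_1^{-1}]$ would land in $t+c\equiv 3\bmod 4$, a congruence class in which the entire $E_1$-page vanishes precisely because $k^M_n(F)=0$ for $n\geq 3$. (The same count shows $d_r$ vanishes on $k^M_*(F)$, since the target would have $t=-1$.) You correctly flag ``a degree/sparseness argument using the Chow weight'' as what is needed, but flagging it is not supplying it; without this congruence the proof does not close.

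Two smaller points. First, the paper separates the case where $-1$ is a square in $F$: there $\rho=0$, the $\rho$-power filtration is trivial, and one instead identifies $\Ext(F)\cong H_{**}(F)\otimes_{\bbF_2[\tau]}\Ext(\bbC)$ directly as in \cite[Proposition 7.1]{WO} before killing $\tau$; your write-up should either do the same or note explicitly that the Bockstein argument degenerates harmlessly in that case. Second, your discussion of multiplicative extensions conflates $\rho^2$ with $\rho^3$: it is $k^M_3(F)=0$, i.e.\ $\rho^3=0$, that $\cd_2(F)\leq 2$ forces, while $\rho^2$ can certainly be nonzero. Once collapse is established, hidden extensions are excluded by the same $t+c$ bookkeeping, so no separate extension analysis is required.
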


\begin{proof}
  If $-1$ is a square in $F$, it follows that
  $\Ext(F) \cong H_{**}(F)\otimes_{\bbF_2[\tau]} \Ext(\bbC)$ by an
  argument similar to \cite[Proposition 7.1]{WO}. The class $\tau$ is
  killed after inverting $h_1$, so the result follows in this case.

  If $-1$ is not a square in $F$, use the $\rho$-Bockstein spectral
  sequence. The $E_1$ page of the $h_1$-inverted $\rho$-Bockstein
  spectral sequence is 
  \begin{equation*}
    E_1^{\epsilon,*,*,*} \cong \rho^{\epsilon}k^M_*(F)/\rho^{\epsilon +1} k^M_*(F) \otimes \Ext(\bbC)[h_1^{-1}]
  \end{equation*}
  and the $d_r$ differential has degree $(r,1,-1,0)$ with respect to
  the grading $(\epsilon, f, t, c)$.

  The differentials $d_r$ with $r\geq 1$ vanish on the generators
  $P=v_1^{4}$ and $v_n$ for $n\geq 2$ of $\Ext(\bbC)[h_1^{-1}]$ for
  degree reasons. Any non-zero class $x \in \Ext(\bbC)[h_1^{-1}]$ has
  $t+c \equiv 0 \bmod 4$, but the degree of $d_r(x)$ satisfies
  $t+c \equiv 3 \bmod 4$. If $F$ has cohomological dimension at most
  2, then any non-zero class in the $h_1$-inverted $\rho$-Bockstein
  spectral sequence satisfies $t+c \not\equiv 3 \bmod 4$. Hence the
  $h_1$-inverted $\rho$-Bockstein spectral sequence collapses. There
  is no possibility for hidden extensions, so the proposition follows.
\end{proof}

\begin{proposition}
  \label{prop:alg_closed}
  If $\Fbar$ is an algebraically closed field of characteristic
  different from $2$, the $\eta$-inverted motivic homotopy groups of
  spheres over $\Fbar$ are given by
  \begin{equation*}
    \pi_{**}(\unit\twocomp)(\Fbar)[\eta^{-1}] \cong \pi_{**}(\unit\twocomp)(\bbC)[\eta^{-1}] \cong \bbF_2[\eta^{\pm 1}, \mu, \varepsilon]/(\varepsilon^2)
  \end{equation*}
  where $\mu \in \pi_{9,5}(\unit\twocomp)$ is the unique homotopy
  class detected by $Ph_1$ and
  $\varepsilon \in \pi_{8,5}(\unit\twocomp)$ is the unique homotopy
  class detected by $c_0$.
\end{proposition}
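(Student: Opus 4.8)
The plan is to reduce the statement over an arbitrary algebraically closed field $\Fbar$ of characteristic different from $2$ to the already-known computation over $\bbC$, and to handle the $\bbC$ case by combining the motivic Adams spectral sequence input with the convergence result of Corollary \ref{cor:convergence}. First I would observe that for an algebraically closed field, $-1$ is a square, so the computation of $\Ext(\Fbar)$ is as simple as possible: by the argument cited in the proof of Proposition \ref{prop:F-ext} (an argument in the style of \cite[Proposition 7.1]{WO}), one has $\Ext(\Fbar)\cong H_{**}(\Fbar)\otimes_{\bbF_2[\tau]}\Ext(\bbC)$. Since $\cd_2(\Fbar)=0$, the mod $2$ motivic cohomology $H_{**}(\Fbar)$ is just $\bbF_2[\tau]$, exactly as for $\bbC$. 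Thus $\Ext(\Fbar)\cong\Ext(\bbC)$ as tri-graded rings, and after inverting $h_1$ we get $\Ext(\Fbar)[h_1^{-1}]\cong\Ext(\bbC)[h_1^{-1}]\cong\bbF_2[h_1^{\pm 1},P,v_2,v_3,\ldots]$ using the convention recalled at the end of the introduction.

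Next I would push this isomorphism through the spectral sequences. The comparison of $\Ext$'s is induced by the base-change map $\SH(\bbC)\to\SH(\Fbar)$ (or, if $\Char\Fbar=p>0$, by base change along $\bbZ[1/p]$-algebras and the fact that mod-$2$ motivic cohomology is insensitive to this; alternatively one invokes \cite{WO} directly for the positive-characteristic comparison), so it is compatible with the motivic Adams differentials. Hence the $h_1$-inverted motivic Adams spectral sequence over $\Fbar$ has the same $E_2$ page and the same differentials as over $\bbC$; by \cite{GI} and \cite{AM} (the resolution of the Guillou--Isaksen conjecture) the pattern of differentials is known and the $E_\infty$ page is $\bbF_2[h_1^{\pm 1},\mu,\varepsilon]/(\varepsilon^2)$. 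Convergence over $\Fbar$ follows from Corollary \ref{cor:convergence} since $\cd_2(\Fbar)=0\leq 2$, so the associated graded of $\pi_{**}(\unit\twocomp)(\Fbar)[\eta^{-1}]$ agrees with that over $\bbC$; one then checks there are no hidden extensions, either by citing that the $\bbC$ answer is already fully determined (Guillou--Isaksen) and arguing by naturality of the base-change map $\pi_{**}(\unit\twocomp)(\bbC)[\eta^{-1}]\to\pi_{**}(\unit\twocomp)(\Fbar)[\eta^{-1}]$, or by observing the $E_\infty$ page is concentrated in low enough filtration (filtrations $0$ and $1$ in each Milnor--Witt stem) that multiplicative extension problems are forced. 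This also pins down the named generators: $\mu\in\pi_{9,5}$ is the unique class detected by $Ph_1$ and $\varepsilon\in\pi_{8,5}$ the unique class detected by $c_0$, inherited from $\bbC$ via base change.

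The main obstacle I anticipate is not the algebra but ensuring the base-change comparison map is genuinely an isomorphism on the relevant spectral sequences in positive characteristic, where one cannot simply restrict a map from $\Spec\bbC$; here I would lean on the results of Wilson--{\O}stv{\ae}r \cite{WO} that establish the analogous comparisons over positive-characteristic fields (this is why the characteristic $\neq 2$ hypothesis appears), together with the fact that $\ell$-adic (here $2$-complete) motivic cohomology and the Steenrod algebra over an algebraically closed field are insensitive to the characteristic once $2$ is inverted. A secondary point requiring a little care is the strong-convergence input: Corollary \ref{cor:convergence} gives convergence for $s-w>1$, so the statement in Milnor--Witt stems $0$ and $1$ must be handled separately — but in those stems the $\eta$-inverted groups are governed by the Milnor--Witt $0$- and $1$-stems, and over $\Fbar$ these are $W(\Fbar)\twocomp\cong\bbZ/2$ (since $W(\Fbar)=\bbZ/2$) in stem $0$ and vanish in stem $1$, matching $\bbF_2[\eta^{\pm 1},\mu,\varepsilon]/(\varepsilon^2)$ in those degrees; this is consistent with Proposition \ref{prop:0-stem} and the subsequent discussion. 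Assembling these pieces yields both displayed isomorphisms and the identification of $\mu$ and $\varepsilon$.
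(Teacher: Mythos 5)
Your proposal is correct and follows essentially the same route as the paper: reduce to $\bbC$ via the Wilson--{\O}stv{\ae}r comparison results (\cite[Lemma 6.4]{WO} in characteristic zero, \cite[Corollary 6.1]{WO} via Witt vectors in positive characteristic) and then quote the resolved Guillou--Isaksen conjecture; your extra discussion of convergence, hidden extensions, and the low Milnor--Witt stems is harmless but unnecessary once one has an isomorphism of the full spectral sequences $\MASS(\Fbar)\cong\MASS(\bbC)$ together with their abutments. One small imprecision: for a general algebraically closed field of characteristic zero there is no map $\bbC\to\Fbar$, so the comparison is a zig-zag through $\overline{\bbQ}$ rather than a direct base-change map out of $\SH(\bbC)$ --- but this is exactly what the cited results of \cite{WO} supply.
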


\begin{proof}
  If $\Fbar$ is characteristic zero, there is an isomorphism
  $\MASS(\Fbar) \cong \MASS(\bbC)$ by the proof of \cite[Lemma
  6.4]{WO}.  If $\Fbar$ has positive characteristic, the change of
  characteristic argument \cite[Corollary 6.1]{WO} comparing
  $\MASS(\Fbar)$ to $\MASS(\bbC)$ via the motivic Adams spectral
  sequence over the ring of Witt vectors of $\Fbar$ shows there is an
  isomorphism of spectral sequences $\MASS(\Fbar) \cong \MASS(\bbC)$.
  This isomorphism propagates to an isomorphism after inverting
  multiplication by $h_1$. The now resolved conjecture of Guillou and
  Isaksen in \cite[Conjecture 1.3]{GI} gives the explicit
  description.
\end{proof}

\begin{proposition}
  \label{prop:cd2-mass}
  The $d_2$ differentials for the $h_1$-inverted motivic Adams
  spectral sequence for a field $F$ with characteristic different from
  2 and $\cd_2(F)\leq 2$ follow from $d_2(v_n) = h_1^2v_{n-1}^2$ for
  $n\geq 3$ and $d_2(x) = 0 $ for $x \in k^M_*(F)$ by using the
  Leibniz rule. Furthermore, $\MASS(F)[h_1^{-1}]$ collapses at the
  $E_3$ page.
\end{proposition}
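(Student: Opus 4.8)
The plan is to transport the Guillou--Isaksen--Andrews--Miller computation over $\bbC$ to $F$ by base change and then exclude exotic differentials by a sparseness argument in the trigrading $(f,t,c)$. By proposition \ref{prop:F-ext} the $E_2$ page $\MASS(F)[h_1^{-1}]$ is the ring $k^M_*(F) \otimes \bbF_2[h_1^{\pm 1}, P, v_2, v_3, \ldots]$ with $P = v_1^4$, and since this is a spectral sequence of algebras every $d_r$ is a derivation; hence $d_r$ is determined by its values on the algebra generators $k^M_1(F)$, $h_1^{\pm 1}$, $P$, $v_2$, and the $v_n$ with $n \geq 3$.

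First I would dispose of the easy generators. Every class of $\Ext(F)[h_1^{-1}]$ has Milnor-Witt stem $t \geq 0$, with $t = 0$ exactly on $k^M_*(F) \otimes \bbF_2[h_1^{\pm 1}]$. Since $d_r$ has degree $(r, -1, r-1)$ and $k^M_*(F)$ and $h_1^{\pm 1}$ lie in stem $0$, a differential on them would land in stem $-1$ and so vanishes; thus these classes are permanent cycles. The classes $v_2$ and $P$ sit in stems $3$ and $4$; the target of $d_2(v_2)$ would lie in stem $2$, where $\Ext(F)[h_1^{-1}]$ is zero, and the target of $d_2(P)$ has Chow weight $5$ and stem $3$, forcing a $k^M_4(F)$-factor, which vanishes since $\cd_2(F) \leq 2$. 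So $d_2$ can be nonzero only on the $v_n$ with $n \geq 3$.

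Next I would pin down $d_2(v_n)$. A bookkeeping of the $(f,t,c)$-grading shows the target degree of $d_2(v_n)$ is one-dimensional, spanned by a power of $h_1$ times $v_{n-1}^2$; in particular it has no $k^M_{\geq 1}(F)$-component, since the equations $2^i = 2^n - 1$ and $2^i + 2^j = 2^n$ (with $i < j$) have no solutions for $n \geq 3$. To see that $d_2(v_n)$ is nonzero I would base change along $F \hookrightarrow \Fbar$, where $\Fbar$ is an algebraic closure of $F$: the base change functor $\SH(F) \to \SH(\Fbar)$ induces a map of motivic Adams spectral sequences $\MASS(F) \to \MASS(\Fbar)$, and $\MASS(\Fbar) \cong \MASS(\bbC)$ by proposition \ref{prop:alg_closed}. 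On $E_2$ pages this map is the augmentation $k^M_*(F) \to k^M_*(\Fbar) = \bbF_2$ tensored with the identity of $\bbF_2[h_1^{\pm 1}, P, v_2, \ldots]$, so it carries $v_n$ to $v_n$; since $d_2(v_n) = h_1^2 v_{n-1}^2 \neq 0$ over $\bbC$ by \cite{GI, AM}, naturality forces $d_2(v_n) = h_1^2 v_{n-1}^2$ over $F$ as well, this being the unique nonzero class in the target degree. Extending by the Leibniz rule gives the stated $d_2$, and as $d_2$ is then $k^M_*(F)$-linear we obtain $E_3 \cong k^M_*(F) \otimes H\big(\bbF_2[h_1^{\pm 1}, P, v_2, v_3, \ldots], d_2\big)$. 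Since $h_1$ is invertible, this homology is $\bbF_2[h_1^{\pm 1}, P] \otimes H\big(\bbF_2[v_2, v_3, \ldots],\, v_n \mapsto v_{n-1}^2\big)$, and the second factor is $\bbF_2[v_2]/(v_2^2)$, which one can check by a direct Koszul-type computation or simply read off from the $\bbC$ case, where $E_3(\bbC) = E_{\infty}(\bbC) = \bbF_2[h_1^{\pm 1}, P, v_2]/(v_2^2)$. Hence $E_3 \cong k^M_*(F) \otimes \bbF_2[h_1^{\pm 1}, P, v_2]/(v_2^2)$.

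Finally I would show collapse at $E_3$. The page $E_3$ is concentrated in Milnor-Witt stems $t \geq 0$ with $t \equiv 0$ or $3 \bmod 4$, and $d_r$ lowers the stem by $1$, so the only possible nonzero $d_r$ with $r \geq 3$ runs from a class of stem $4k$ with $k \geq 1$ — necessarily of the form $\alpha\, h_1^a P^k$ with $\alpha \in k^M_j(F)$ — to a class of stem $4k - 1$, necessarily of the form $\alpha'\, h_1^{a'} P^{k-1} v_2$ with $\alpha' \in k^M_{j'}(F)$. Comparing Chow weights, and using that $d_r$ raises the Chow weight by $r-1$, forces $j' = j + r + 2 \geq 5$, contradicting $\cd_2(F) \leq 2$. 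Hence $d_r = 0$ for all $r \geq 3$ and $\MASS(F)[h_1^{-1}]$ collapses at the $E_3$ page. The main obstacle is the nonvanishing input over $\bbC$ together with the functoriality of the motivic Adams spectral sequence under base change (including positive characteristic, absorbed into proposition \ref{prop:alg_closed}); everything else is degree bookkeeping.
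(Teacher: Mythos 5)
Your proof is correct and follows essentially the same route as the paper: pull back the Andrews--Miller differential $d_2(v_n)=h_1^2v_{n-1}^2$ along $F\hookrightarrow\Fbar$ using Proposition \ref{prop:alg_closed}, rule out correction terms supported on $k^M_{\geq 1}(F)$ by inspecting the trigrading, and kill all higher differentials by degree reasons using $\cd_2(F)\leq 2$. Your explicit $(f,t,c)$ bookkeeping (one-dimensionality of the target of $d_2(v_n)$, and the forced $j'=j+r+2\geq 5$ for a hypothetical $d_r$ with $r\geq 3$) just spells out what the paper compresses into the $t+c\bmod 4$ observation and the phrase ``by degree reasons.''
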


\begin{proof}
  The inclusion of $F$ into its algebraic closure $\Fbar$ induces a
  map of spectral sequences
  \begin{equation*}
    \Phi : \MASS(F)[h_1^{-1}] \to \MASS(\Fbar)[h_1^{-1}] \cong \MASS(\bbC)[h_1^{-1}].
  \end{equation*}
  Andrews and Miller \cite[Theorem 9.15]{AM} have proved that in
  $\MASS(\bbC)[h_1^{-1}]$ there are differentials
  $d_2(v_n)=h_1^2v_{n-1}^2$ for all $n\geq 3$. It follows that in
  $\MASS(F)[h_1^{-1}]$ we must have $d_2(v_n)= h_1^2v_{n-1}^2$ up to
  some element in the kernel of the comparison map $\Phi$. A class
  $x \in \ker(\Phi)$ satisfies $t+c \equiv 1 \bmod 4$ or
  $t+c \equiv 2 \bmod 4$, whereas $d_2(v_n)$ satisfies
  $t+c \equiv 0 \bmod 4$. Hence $d_2(v_n)= h_1^2v_{n-1}^2$ is true on
  the nose.  That the spectral sequence collapses at the $E_3$ page
  follows by degree reasons.
\end{proof}

\begin{theorem}
  \label{thm:cd2}
  For a field $F$ with $\cd_2(F)\leq 2$ and characteristic different
  from 2 the two-complete $\eta$-inverted Milnor-Witt stems of $F$ are
  \begin{equation*}
    \Pihat_t(F)[\eta^{-1}] \cong
    \begin{cases}
      W(F)\twocomp[\eta^{\pm 1}] & t\geq 0 \text{ and } t\equiv 3 \bmod 4 \text{ or } t \equiv 0 \bmod 4 \\
      0 & \text{otherwise.}
    \end{cases}
  \end{equation*}
  $\Pihat_*(F)[\eta^{-1}]$ is the polynomial ring over
  $W(F)\twocomp[\eta^{\pm 1}]$ on two classes $\{v_2\}$ and $\{P\}$
  in Milnor-Witt stem $3$ and $4$ respectively, subject to the
  relation $\{v_2\}^2 = 0$.
\end{theorem}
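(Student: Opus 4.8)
The plan is to read off the statement from the collapse result of Proposition \ref{prop:cd2-mass} together with the identification of the $E_2$ page in Proposition \ref{prop:F-ext}, and then to resolve the usual two difficulties that stand between an $E_\infty$ page and actual homotopy groups: reassembling the associated graded into the module structure, and controlling convergence. First I would record that by Propositions \ref{prop:F-ext} and \ref{prop:cd2-mass} the $E_3 = E_\infty$ page of $\MASS(F)[h_1^{-1}]$ is the homology of $(k^M_*(F)\otimes \Ext(\bbC)[h_1^{-1}], d_2)$ with $d_2$ determined by $d_2(v_n) = h_1^2 v_{n-1}^2$ for $n\geq 3$ and $d_2 = 0$ on $k^M_*(F)$ and on $P$, $v_2$. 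Since $k^M_*(F)$ is concentrated in degrees $0,1,2$ (as $\cd_2(F)\leq 2$) and the differential does not interact with it, the homology is $k^M_*(F)$ tensored with $H_*(\Ext(\bbC)[h_1^{-1}], d_2)$. For the complex-numbers factor, $\Ext(\bbC)[h_1^{-1}] = \bbF_2[h_1^{\pm 1}, P, v_2, v_3, \ldots]$ with $d_2$ a derivation sending $v_n \mapsto h_1^2 v_{n-1}^2$ for $n\geq 3$; a direct computation (the same one underlying the Andrews--Miller result that $\pi_{**}(\unit\twocomp)(\bbC)[\eta^{-1}] \cong \bbF_2[\eta^{\pm 1},\mu,\varepsilon]/\varepsilon^2$) shows the homology is $\bbF_2[h_1^{\pm 1}, P]\{1, v_2\}$ with $v_2^2 = 0$, where $P$ and $v_2$ survive because $d_2$ vanishes on them and nothing hits them. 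Thus $E_\infty = k^M_*(F) \otimes \bbF_2[h_1^{\pm 1}, P]\{1, v_2\}/(v_2^2)$.

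Next I would translate this into Milnor-Witt stems. The class $h_1$ has Milnor-Witt stem $0$, $P$ has Milnor-Witt stem $4$, $v_2$ has Milnor-Witt stem $3$, and the Milnor $K$-theory classes in $k^M_n(F)$ also have Milnor-Witt stem $0$ (they sit in weight $-n$ and stem $-n$ in the conventions of \cite[\S2.1]{GI}, contributing a factor that assembles with the Witt-group information). Collecting by $t = s-w$: the $E_\infty$ page in Milnor-Witt stem $t$ is nonzero precisely when $t \equiv 0$ or $t\equiv 3 \bmod 4$ and $t\geq 0$, and in each such stem it is a free module of rank one over $k^M_*(F)\otimes\bbF_2[h_1^{\pm 1}]$ on the appropriate monomial in $P$ and $v_2$. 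By Corollary \ref{cor:convergence} the $h_1$-inverted motivic Adams spectral sequence converges strongly to $\pi_{**}(\unit\twocomp)(F)[\eta^{-1}]$ for $t > 1$; for $t = 0$ we already know $\Pihat_0(F)[\eta^{-1}] \cong W(F)\twocomp[\eta^{\pm 1}]$ by Proposition \ref{prop:0-stem} (noting $\cd_2(F)\leq 2$ forces $W(F)$ to have bounded $2$-torsion exponent, indeed $k^M_*(F)\twocomp$ assembles to $W(F)\twocomp$); and for $t = 1$ the $E_\infty$ page vanishes, so $\Pihat_1(F)[\eta^{-1}] = 0$ regardless of a potential convergence subtlety, or one can invoke the vanishing result of \cite{Vanishing}. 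So the group-level statement follows once one identifies $k^M_*(F)\twocomp \otimes \bbF_2[h_1^{\pm 1}]$, suitably regraded, with $W(F)\twocomp[\eta^{\pm 1}]$ — which is exactly the content of Proposition \ref{prop:0-stem} applied in each relevant stem after multiplying by a power of $P$ or $v_2$ to land back in stem $0$ mod the grading shift.

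Finally, for the ring structure I would argue that the associated graded ring of $\pi_{**}(\unit\twocomp)(F)[\eta^{-1}]$ with respect to the Adams filtration is the $E_\infty$ page $W(F)\twocomp[\eta^{\pm 1}][\{P\},\{v_2\}]/(\{v_2\}^2)$, and then lift the relations. The relation $\{v_2\}^2 = 0$ in homotopy: the product $\{v_2\}^2$ lives in Milnor-Witt stem $6 \equiv 2 \bmod 4$, where the homotopy group vanishes, so $\{v_2\}^2 = 0$ is forced. That $\{P\}$ is a polynomial generator, i.e. no powers of $\{P\}$ die or develop unexpected relations: $\{P\}^k$ is detected by $P^k h_1^{?}$ which is nonzero on $E_\infty$ and is the unique class in its degree up to the $W(F)\twocomp$-module action, so there can be no multiplicative relation beyond $\{v_2\}^2=0$; any would have to show up as a relation on $E_\infty$, and we have computed $E_\infty$ to be free over $W(F)\twocomp[\eta^{\pm 1}]$ on $1, \{P\}, \{P\}^2, \ldots$ and $\{v_2\}, \{v_2\}\{P\}, \ldots$. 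The main obstacle is ruling out hidden extensions — both hidden $\eta$-extensions (already handled by the strong convergence of Corollary \ref{cor:convergence}, which guarantees the $\eta$-module structure is read off correctly) and hidden multiplicative extensions among $\{P\}$, $\{v_2\}$, and the $W(F)\twocomp$-coefficients. For the latter, the key point is sparseness: the $E_\infty$ page is concentrated in Milnor-Witt stems $\equiv 0, 3 \bmod 4$, so a hidden extension would have to change Adams filtration while staying in an allowed stem, and the only room for that is absorbed into the $W(F)\twocomp$-coefficient of the 0-stem, which is already pinned down by Proposition \ref{prop:0-stem}. I would spell out that there is no filtration jump available that produces a class outside the free module described, completing the proof.
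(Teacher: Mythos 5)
Your argument is correct and follows essentially the same route as the paper: the paper's proof of Theorem \ref{thm:cd2} simply combines Proposition \ref{prop:0-stem} for the $0$-stem with the $E_\infty$-page computation coming from Propositions \ref{prop:F-ext} and \ref{prop:cd2-mass}, exactly as you do, and your added details (the homology of the $d_2$-complex being $\bbF_2[h_1^{\pm1},P,v_2]/(v_2^2)$, the regrading by Milnor--Witt stem, the sparseness argument against hidden extensions, and the check that $\cd_2(F)\leq 2$ forces $W(F)$ to have bounded $2$-torsion exponent so that Proposition \ref{prop:0-stem} applies) are correct elaborations of what the paper leaves implicit. One small caveat: your fallback citation of \cite{Vanishing} for the Milnor--Witt $1$-stem does not literally apply, since that result concerns formally real fields and a field with $\cd_2(F)\leq 2$ is not formally real; however, the paper treats $t=1$ no more explicitly than your primary option (vanishing of the $E_\infty$-page), so this does not separate your argument from the paper's.
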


\begin{proof}
  $\Pihat_0(F)[\eta^{-1}]$ is shown to be $W(F)\twocomp[\eta^{\pm 1}]$
  in proposition \ref{prop:0-stem}. The remaining stems and ring
  structure follows from the calculation of the $h_1$-inverted motivic
  Adams spectral sequence over $F$ whose differentials are determined
  in proposition \ref{prop:cd2-mass}.
\end{proof}

We now identify some classes in $\pi_{**}(\unit\twocomp)(\bbF_q)$ for
finite fields $\bbF_q$ using the analysis of the motivic Adams
spectral sequence by Wilson and {\O}stv{\ae}r in \cite{WO}. Over a
finite field $\bbF_q$ with $q\equiv 1 \bmod 4$, define
$\varepsilon \in \pi_{8,5}(\unit\twocomp) \cong (\bbZ/2)^4$ to be a
class detected by $c_0$. The class $\varepsilon$ is uniquely
determined modulo $u\eta \varepsilon$. Write $\mu$ for a class in
$\pi_{9,5}(\unit\twocomp)\cong (\bbZ/2)^4$ detected by $Ph_1$. The
class $\mu$ is uniquely determined modulo $u\eta \mu$.

Over a finite field $\bbF_q$ with $q\equiv 3 \bmod 4$, there is an
isomorphism $\pi_{8,5}(\unit\twocomp)\cong \bbZ/4\oplus \bbZ/4$.
Recall there is a Hopf map $\sigma \in \pi_{7,4}(\unit\twocomp)$
defined by Dugger and Isaksen in \cite{DI-Hopf}. The class
$\eta \sigma$ generates an order four cyclic subgroup of
$\pi_{8,5}(\unit\twocomp)$; define
$\varepsilon\in \pi_{8,5}(\unit\twocomp)$ by the property that
$\varepsilon$ generates $\pi_{8,5}(\unit\twocomp)/(\eta\sigma)$. The
class $\varepsilon$ is detected by $c_0$ and well defined up to an odd
multiple. Further, there is an isomorphism
$\pi_{9,5}(\unit\twocomp) \cong \bbZ/4\oplus(\bbZ/2)^2$. Define $\mu$
to be a class of order four that is detected by $Ph_1$; the class
$\mu$ is uniquely defined up to an odd multiple.

\begin{corollary}
  For a finite field $\bbF_q$ with $q$ odd, the $\eta$-inverted
  Milnor-Witt stems are as follows.
  \begin{equation*}
    \Pihat_n(\bbF_q)[\eta^{-1}] \cong 
    \begin{cases}
      W(\bbF_q)\twocomp[\eta^{\pm 1}] & n\geq 0 \text{ and } n\equiv 3 \bmod 4 \text{ or } n \equiv 0 \bmod 4 \\
      0 & \text{otherwise}
      \end{cases}
  \end{equation*}
  The classes $\mu$ and $\varepsilon$ generate
  $\Pihat_*(\bbF_q)[\eta^{-1}]$ as an algebra over
  $\Pihat_0(\bbF_q)[\eta^{-1}]$, subject to the relation
  $\varepsilon^2 = 0$. When $q\equiv 3\bmod 4$ this shows
  $\Pihat_*(\bbF_q) \cong \bbZ/4[\eta^{\pm 1}, \mu,\epsilon]/\epsilon^2$
  and for $q \equiv 1 \bmod 4$ we have
  $\Pihat_*(\bbF_q) \cong \bbZ/2[\eta^{\pm 1}, u,
  \mu,\epsilon]/(u^2, \epsilon^2)$.
\end{corollary}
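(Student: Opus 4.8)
The plan is to deduce the corollary from Theorem~\ref{thm:cd2} and Proposition~\ref{prop:0-stem}, the only point needing care being the replacement of the abstract polynomial generators $\{v_2\}$ and $\{P\}$ of Theorem~\ref{thm:cd2} by the named classes $\varepsilon$ and $\mu$.

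First I would note that a finite field $\bbF_q$ of odd characteristic has $\cd_2(\bbF_q)=1\le 2$, so Theorem~\ref{thm:cd2} applies: it gives both the additive statement that $\Pihat_n(\bbF_q)[\eta^{-1}]\cong W(\bbF_q)\twocomp[\eta^{\pm 1}]$ for $n\ge 0$ with $n\equiv 0$ or $3\bmod 4$ (and $0$ otherwise), and the description of $\Pihat_*(\bbF_q)[\eta^{-1}]$ as the polynomial algebra over $\Pihat_0(\bbF_q)[\eta^{-1}]=W(\bbF_q)\twocomp[\eta^{\pm1}]$ on classes $\{v_2\}$ and $\{P\}$ in Milnor-Witt stems $3$ and $4$, subject to $\{v_2\}^2=0$. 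Substituting the standard computation of the Witt ring of a finite field recalled just before the statement --- so $W(\bbF_q)\twocomp\cong\bbZ/2[u]/u^2$ for $q\equiv1\bmod4$ and $W(\bbF_q)\twocomp\cong\bbZ/4$ for $q\equiv3\bmod4$ --- already yields the displayed formula for $\Pihat_n(\bbF_q)[\eta^{-1}]$.

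It remains to identify $\{v_2\}$ with $\varepsilon$ and $\{P\}$ with $\mu$. By Proposition~\ref{prop:F-ext} one has $\Ext(\bbF_q)[h_1^{-1}]\cong k^M_*(\bbF_q)\otimes\Ext(\bbC)[h_1^{-1}]$, and in the presentation $\Ext(\bbC)[h_1^{-1}]\cong\bbF_2[h_1^{\pm1},P,v_2,v_3,\ldots]$ of \cite[Theorem 3.4]{GI} a count of $(f,t,c)$-degrees forces $c_0=h_1^2 v_2$ and $Ph_1=h_1 P$ in the summand $1\otimes\Ext(\bbC)[h_1^{-1}]$. The classes $\varepsilon$ and $\mu$ constructed just before the corollary are detected by $c_0$ and $Ph_1$, so these are permanent cycles in the $h_1$-inverted motivic Adams spectral sequence over $\bbF_q$ (which collapses at $E_3$ by Proposition~\ref{prop:cd2-mass}), and hence $v_2$ and $P$ are permanent cycles as well. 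Since $\eta$ is detected by the unit $h_1$, multiplication by $\eta^{2}$, respectively $\eta$, identifies the Milnor-Witt stem $3$, respectively $4$, homotopy group with the one carrying $v_2$, respectively $P$; therefore $\varepsilon=\eta^{2}\{v_2\}$ and $\mu=\eta\{P\}$ modulo higher Adams filtration. By the horizontal vanishing line of Corollary~\ref{cor:convergence} the filtration is finite in each Milnor-Witt stem, and the only higher-filtration ambiguity present in these two stems is the indeterminacy already recorded before the statement (``modulo $u\eta\varepsilon$'' and ``modulo $u\eta\mu$'' when $q\equiv1\bmod4$, or ``up to an odd multiple'' when $q\equiv3\bmod4$). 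In each case that ambiguity amounts to multiplying $\{v_2\}$ or $\{P\}$ by a unit of $W(\bbF_q)\twocomp[\eta^{\pm1}]$: the element $1+u\eta$ is a unit because $(u\eta)^2=u^2\eta^2=0$, and an odd integer is a unit in $\bbZ/4[\eta^{\pm1}]$. So $\varepsilon$ and $\mu$ are themselves polynomial generators, and Theorem~\ref{thm:cd2} gives $\Pihat_*(\bbF_q)[\eta^{-1}]\cong\Pihat_0(\bbF_q)[\eta^{-1}][\mu,\varepsilon]/(\varepsilon^2)$, which unwinds to $\bbZ/4[\eta^{\pm1},\mu,\varepsilon]/(\varepsilon^2)$ for $q\equiv3\bmod4$ and to $\bbZ/2[\eta^{\pm1},u,\mu,\varepsilon]/(u^2,\varepsilon^2)$ for $q\equiv1\bmod4$.

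The hard part is this final bookkeeping: checking that $\varepsilon$ and $\mu$, which over $\bbF_q$ are only defined up to an indeterminacy (unlike their counterparts over $\bbC$), still serve as genuine algebra generators over $\Pihat_0(\bbF_q)[\eta^{-1}]$, i.e.\ that the indeterminacy is absorbed into a unit. Everything else is a direct appeal to Theorem~\ref{thm:cd2}, Proposition~\ref{prop:0-stem}, and the standard Witt-group computation.
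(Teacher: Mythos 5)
Your proposal is correct and follows essentially the same route as the paper: specialize Theorem \ref{thm:cd2} (equivalently, the collapsed $h_1$-inverted motivic Adams spectral sequence of Proposition \ref{prop:cd2-mass}) to $k^M_*(\bbF_q)=\bbF_2[u]/u^2$ and $W(\bbF_q)$, with the $\bbZ/4$ for $q\equiv 3\bmod 4$ coming from $h_1\rho$ detecting multiplication by $2$. The paper dismisses the product structure as ``clear''; your careful check that the indeterminacy in the definitions of $\varepsilon$ and $\mu$ is absorbed into a unit of $\Pihat_0(\bbF_q)[\eta^{-1}]$ is a correct elaboration of that step.
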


\begin{proof}
  The mod 2 Milnor $K$-theory of a finite field with odd
  characteristic is given by $k^M_*(\bbF_q) = \bbF_2[u]/u^2$ where $u$
  is the class of a non-square element of $\bbF_q^{\times}$. If
  $q \equiv 3 \bmod 4$ then $u = \rho = [-1]$. As $h_1\rho$ in
  $\MASS(\bbF_q)[h_1^{-1}]$ detects multiplication by 2 in
  $\pi_{**}(\unit\twocomp)$, we arrive at the claimed group
  structure. The product structure is clear given the products in the
  $h_1$-inverted motivic Adams spectral sequence.
\end{proof}

\begin{corollary}
  The $\eta$-inverted Milnor-Witt stems for a $p$-adic field $\bbQ_p$
  are as follows. 
  \begin{equation*}
    \Pihat_0(\bbQ_p)[\eta^{-1}] \cong W(\bbQ_p)\twocomp[\eta^{\pm 1}] \cong
    \begin{cases}
      \bbZ/2[\eta^{\pm 1}, u, \pi ]/(u^2, \pi^2) &  p \equiv 1 \bmod 4 \\
      (\bbZ/4\oplus \bbZ/4)[\eta^{\pm 1}] & p \equiv 3 \bmod 4 \\
      (\bbZ/8 \oplus \bbZ/2 \oplus \bbZ/2)[\eta^{\pm 1}] & p = 2 
    \end{cases}
  \end{equation*}

  \begin{equation*}
    \Pihat_n(\bbQ_p)[\eta^{-1}] \cong 
    \begin{cases}
      W(\bbQ_p)\twocomp[\eta^{\pm 1}] & n\geq 0 \text{ and } n\equiv 3 \bmod 4 \text{ or } n \equiv 0 \bmod 4 \\
      0 & \text{otherwise}
      \end{cases}
  \end{equation*}
\end{corollary}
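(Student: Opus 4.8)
The plan is to reduce the statement to Theorem~\ref{thm:cd2} and a computation of the Witt ring $W(\bbQ_p)$. The field $\bbQ_p$ has characteristic $0$ and $\cd_2(\bbQ_p) = 2$, since a $p$-adic field has Galois cohomological dimension $2$ at every prime (including $\ell = p$). Hence Theorem~\ref{thm:cd2} applies verbatim and yields
\begin{equation*}
  \Pihat_n(\bbQ_p)[\eta^{-1}] \cong
  \begin{cases}
    W(\bbQ_p)\twocomp[\eta^{\pm 1}] & n \geq 0 \text{ and } n \equiv 0 \text{ or } 3 \bmod 4 \\
    0 & \text{otherwise,}
  \end{cases}
\end{equation*}
and by Proposition~\ref{prop:0-stem} the isomorphism in Milnor-Witt stem $0$ is one of rings (the hypothesis of Proposition~\ref{prop:0-stem} holds because $W(\bbQ_p)$ is finite, hence has bounded $2$-torsion exponent). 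So it remains only to identify the ring $W(\bbQ_p)\twocomp$ in each of the three cases $p \equiv 1 \bmod 4$, $p \equiv 3 \bmod 4$, and $p = 2$.

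For this I would invoke the classical structure theory of Witt rings of local fields, for which Scharlau~\cite{Scharlau} is a convenient reference. Since $\bbQ_p$ is not formally real (its level is finite), $W(\bbQ_p)$ is a $2$-primary torsion group, and since $\bbQ_p$ is a local field $W(\bbQ_p)$ is finite; therefore $W(\bbQ_p)\twocomp = W(\bbQ_p)$ and the completion changes nothing. For $p$ odd, Springer's theorem for the complete discretely valued field $\bbQ_p$ gives, after a choice of uniformizer $\pi$, an isomorphism of abelian groups $W(\bbQ_p) \cong W(\bbF_p) \oplus W(\bbF_p)$; combined with $W(\bbF_p) \cong \bbZ/2[u]/u^2$ for $p \equiv 1 \bmod 4$ and $W(\bbF_p) \cong \bbZ/4$ for $p \equiv 3 \bmod 4$ (recalled already after Proposition~\ref{prop:0-stem}) this produces the stated group structures $(\bbZ/2)^4$ and $\bbZ/4 \oplus \bbZ/4$. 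For $p \equiv 1 \bmod 4$ the ring structure is pinned down using the filtration of $W(\bbQ_p)$ by powers of the fundamental ideal, whose associated graded is $\bbZ/2 \oplus k^M_1(\bbQ_p) \oplus k^M_2(\bbQ_p) = \bbZ/2 \oplus (\bbZ/2)^2 \oplus \bbZ/2$: since $-1$ is a square in $\bbQ_p$ the algebra is an $\bbF_2$-algebra and both generating square classes square to zero in $W(\bbQ_p)$, which identifies $W(\bbQ_p) \cong \bbZ/2[u,\pi]/(u^2,\pi^2)$ with $u$, $\pi$ the classes of a non-square unit and a uniformizer. For $p = 2$, Springer's theorem is unavailable, and I would instead quote the known presentation $W(\bbQ_2) \cong \bbZ/8 \oplus \bbZ/2 \oplus \bbZ/2$ directly (the $\bbZ/8$ summand generated by $\langle 1 \rangle$, since the level of $\bbQ_2$ is $4$).

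The only point requiring genuine care --- and the natural candidate for the main obstacle --- is the dyadic case $p = 2$, where the residue characteristic coincides with the prime being inverted: one must both verify that $\cd_2(\bbQ_2) = 2$, so that Theorem~\ref{thm:cd2} still applies, and appeal to the explicit (rather than Springer-theoretic) description of $W(\bbQ_2)$. Everything else is a routine unwinding of Theorem~\ref{thm:cd2}: the higher Milnor-Witt stems carry no new information, being free $W(\bbQ_p)\twocomp[\eta^{\pm 1}]$-modules of rank one in degrees $n \equiv 0, 3 \bmod 4$ and zero otherwise.
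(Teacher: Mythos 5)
Your proposal is correct and follows essentially the same route as the paper: reduce to Theorem~\ref{thm:cd2} (noting $\cd_2(\bbQ_p)=2$ and $\Char \bbQ_p = 0$) together with the classical arithmetic of $\bbQ_p$. The only cosmetic difference is that the paper records the input as the explicit mod~$2$ Milnor $K$-theory $k^M_*(\bbQ_p)$ (via Milnor and Serre), whereas you compute $W(\bbQ_p)$ directly via Springer's theorem and the finiteness of $W(\bbQ_p)$; these amount to the same classical computation.
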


\begin{proof}
  The mod 2 Milnor $K$-theory of the $p$-adic fields can be calculated
  from the result of Milnor \cite[Lemma 4.6]{Milnor-KTheory} in
  addition to the description of the Witt ring for $p$-adic fields
  which is discussed by Serre in \cite{Serre-Arithmetic}. Explicitly,
  the mod 2 Milnor $K$-theory of a $p$-adic field is
  \begin{equation*}
    k^M_*(\bbQ_p) =  
    \begin{cases}
      \bbZ/2[\pi, u]/(\pi^2, u^2) & p \equiv 1 \bmod 4 \\
      \bbZ/2[\pi,\rho]/(\rho^2, \rho\pi+\pi^2) & p \equiv 3\bmod 4\\
      \bbZ/2[\pi, \rho, u]/(\rho^3, u^2, \pi^2, \rho u, \rho \pi, \rho^2 + u \pi) & p = 2
    \end{cases}
  \end{equation*}
  where $\pi=[p]$, $\rho = [-1]$, $u$ is the class of a lift of a
  non-square in $\bbF_p^{\times}$ when $p \equiv 1 \bmod 4$, and
  $u=[5]$ when $p = 2$.
\end{proof}

\section{The field of rational numbers}
\label{sec:Q} 

We approach the calculation of $\Pihat_*(\bbQ)[\eta^{-1}]$ with the
strategy suggested by the motivic Hasse principle, following the
method of Ormsby and {\O}stv{\ae}r in \cite{MotBPInvQ}. That is, we
analyze the $h_1$-inverted motivic Adams spectral sequence for $\bbQ$
using our knowledge of the $h_1$-inverted motivic Adams spectral
sequence over the completions of $\bbQ$.

We fix our notation for $k^M_*(\bbQ)$. The mod 2 Milnor $K$-theory of
$\bbQ$ is generated by the classes $[-1]$ and $[p]$ for $p$ a
prime. Milnor shows in \cite[Lemma A.1]{Milnor-KTheory} that there is
a short exact sequence
\begin{equation*}
  0 \to k_2^M(\bbQ) \to \bigoplus k_2^M(\bbQ_{\nu}) \to \bbZ/2 \to 0
\end{equation*}
where the summation is over all completions $\bbQ_{\nu}$ of
$\bbQ$. For every completion $\bbQ_{\nu}$ of $\bbQ$ there is an
isomorphism $k_2^M(\bbQ_{\nu})\cong \bbZ/2$; write $e_{\nu}$ for the
image of $1$ under the canonical map
$k_2^M(\bbQ_{\nu}) \to \bigoplus k_2^M(\bbQ_{\nu})$. For $\ell$ an odd
prime or $-1$, write $a_{\ell}$ for the class in $k_2^M(\bbQ)$ that
maps to $e_{\ell} + e_2$ in $\bigoplus k_2^M(\bbQ_{\nu})$. For
$n\geq 3$ the class $\rho^n$ generates $k_n^M(\bbQ)$. The product
structure in $k_*^M(\bbQ)$ can be deduced from the products given in
table \ref{table:Q-products}; we write $\left(\dfrac{q}{\ell}\right)$
for the Legendre symbol that takes values in the additive group
$\bbZ/2$.

\begin{table}[h]
  \centering
  \begin{tabular}{l|l}
Product & Conditions \\ \hline
    $[\ell][2] = 0 $ & $\ell = 2$ or $\ell = -1$ \\ 
    $[-1][\ell] =a_{\ell}$ & $\ell = -1$ or $\ell$ prime and  $\ell \equiv 3\bmod 4$ \\
    $[-1][\ell] = 0$ & $\ell$ prime and $\ell \equiv 1 \bmod 4$ \\ 
    $[\ell][q] = \left(\dfrac{q}{\ell}\right)a_{\ell} + \left(\dfrac{\raisebox{-3pt}{$\ell$}}{\raisebox{3pt}{$q$}}\right)a_{q}$ & $\ell$ and $q$ odd primes \\ 
    $[2][q] = (\frac{q^2-1}{8} \bmod 2)a_q$ & $q$ odd prime \\
  \end{tabular}
  \caption{Products in $k_*^M(\bbQ)$}
  \label{table:Q-products}
\end{table}

\begin{lemma}
  \label{lem:Q-BSS}
  The $E_1$ page of the $h_1$-inverted $\rho$-Bockstein spectral
  sequence over $\bbQ$ is the $\bbZ/2$-algebra
  \begin{equation*}
    \BSS(\bbQ)[h_1^{-1}] \cong \bigoplus_{n\in\bbN} \rho^n k^M_*(\bbQ) 
    / \rho^{n+1} \otimes_{\bbF_2} \Ext(\bbC)[h_1^{-1}].
  \end{equation*}
  The class $\rho^n$ is in filtration $\epsilon = n$ for all
  $n\in\bbN$, for $\ell \equiv 3 \bmod 4$ a prime $a_{\ell}$ is in
  filtration 1, for $\ell \equiv 1 \bmod 4$ a prime $a_p$ is in
  filtration 0, and $[p]$ for $p$ a prime is in filtration 0. The
  $r$th differential $d_r$ for the $\rho$-Bockstein spectral sequence
  has degree $(\epsilon, f, t, c) = (r, 1, -1, 0)$. See figure
  \ref{fig:Q-RhoBSS} for a chart of the $E_1$-page up to Milnor-Witt
  stem 15 .
\end{lemma}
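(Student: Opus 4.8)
The plan is to build the $\rho$-Bockstein spectral sequence computing $\Ext(\bbQ)$ from $\Ext(\bbR)$-type input (or rather from $\Ext(\bbC)$ with coefficients), invert $h_1$, and track the filtration data. First I would recall the general setup: for a field $F$ in which $-1$ is not a square, the motivic Steenrod algebra $\calA_{**}(F)$ and its dual carry a filtration by powers of $\rho=[-1]\in H_{1,1}(F)=k_1^M(F)$, and the associated graded recovers the Hopf algebroid over $\bbC$ base-changed along $H_{**}(F)/\rho\to\bbF_2[\tau]$. This gives the $\rho$-Bockstein spectral sequence with $E_1$-page $H_{**}(\bbQ)\otimes_{\bbF_2[\tau]}\Ext(\bbC)$ converging to $\Ext(\bbQ)$, exactly as used in the proof of Proposition~\ref{prop:F-ext} and in Corollary~\ref{cor:convergence}. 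The $d_r$-differential raises $\rho$-filtration $\epsilon$ by $r$ and has internal Adams degree change $(f,t,c)=(1,-1,0)$; this is the standard bidegree of a Bockstein-type differential and matches the convention recorded in the statement.

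Next I would invert $h_1$. Since $h_1$ lives in $\epsilon$-filtration $0$ and is a permanent cycle in the $\rho$-Bockstein spectral sequence (it comes from $\bbC$), localizing at $h_1$ is exact and commutes with passing to the spectral sequence, so $\BSS(\bbQ)[h_1^{-1}]$ has $E_1$-page $\bigl(H_{**}(\bbQ)\otimes_{\bbF_2[\tau]}\Ext(\bbC)\bigr)[h_1^{-1}]$. Inverting $h_1$ kills $\tau$ (as already noted in Proposition~\ref{prop:F-ext}), so $H_{**}(\bbQ)\otimes_{\bbF_2[\tau]}\Ext(\bbC)[h_1^{-1}]$ simplifies: the $\bbF_2[\tau]$-module structure on $H_{**}(\bbQ)$ becomes the $\bbF_2$-module $k_*^M(\bbQ)$, and the tensor factor $\Ext(\bbC)[h_1^{-1}]\cong\bbF_2[h_1^{\pm1},P,v_2,v_3,\dots]$ is free over $\bbF_2$. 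Organizing by $\rho$-filtration then yields
\begin{equation*}
  \BSS(\bbQ)[h_1^{-1}] \cong \bigoplus_{n\in\bbN}\bigl(\rho^n k_*^M(\bbQ)/\rho^{n+1}\bigr)\otimes_{\bbF_2}\Ext(\bbC)[h_1^{-1}],
\end{equation*}
which is the claimed $\bbZ/2$-algebra. The algebra structure is inherited from the evident product on the associated graded of the $\rho$-filtered $\Ext(\bbQ)$.

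The remaining points are the filtration assignments. The class $\rho^n$ is the image of $([-1])^n\in k_n^M(\bbQ)$ and by definition of the $\rho$-filtration sits in filtration $\epsilon=n$; since $k_n^M(\bbQ)\ne 0$ for all $n$ (it is generated by $\rho^n$), this is meaningful in every filtration. For the classes $a_\ell$ I would use Milnor's description recalled just before the lemma: $a_\ell\in k_2^M(\bbQ)$ maps to $e_\ell+e_2$. When $\ell\equiv 3\bmod 4$ one has $a_\ell=[-1][\ell]=\rho[\ell]$ by Table~\ref{table:Q-products}, so $a_\ell$ is $\rho$ times a filtration-$0$ class and hence lies in filtration $1$; when $\ell\equiv 1\bmod 4$ the product $[-1][\ell]=0$, so $a_\ell$ is not $\rho$-divisible and lies in filtration $0$. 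Likewise each generator $[p]$ is a degree-one Milnor $K$-theory class not divisible by $\rho$ (it pairs nontrivially with residues away from $\rho$), so $[p]$ is in filtration $0$. Finally the differential bidegree $(\epsilon,f,t,c)=(r,1,-1,0)$ is just the $\rho$-Bockstein bidegree transported into the $(f,t,c)$-grading conventions of Section~1, using $t=s-w$ and $c=s+f-2w$: a $d_r$ multiplies the representing cocycle by $\rho^r$ while shifting Adams filtration by one and stem by $-1$, and since $\rho$ has $(t,c)=(0,0)$ in these coordinates the degree is as stated.

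The main obstacle I expect is bookkeeping rather than conceptual: carefully justifying that the $h_1$-localized $\rho$-Bockstein spectral sequence is well-defined and has the stated $E_1$-page requires knowing that the $\rho$-filtration on $\calA_{**}(\bbQ)$ interacts correctly with $h_1$-inversion and with the $\bbF_2[\tau]$-base-change, and that there are no convergence pathologies after localizing (handled by the same vanishing-line argument as in Corollary~\ref{cor:convergence}). The filtration placement of $a_\ell$ for $\ell\equiv 1\bmod 4$ is the one spot where one must genuinely invoke the non-divisibility $[-1][\ell]=0$ from Table~\ref{table:Q-products} rather than a formal argument, so I would make sure that product relation is in hand before asserting filtration $0$.
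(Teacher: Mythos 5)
Your proposal is correct and follows essentially the same route as the paper: the paper's proof likewise filters the $h_1$-inverted cobar complex by powers of $\rho$ and observes that inverting $h_1$ kills $\tau$, reducing the coefficients to $k^M_*(\bbQ)$ so that the $E_1$-page is the associated graded of $k^M_*(\bbQ)$ for the $\rho$-filtration tensored with $\Ext(\bbC)[h_1^{-1}]$. One small point of precision: to place $a_\ell$ for $\ell\equiv 1\bmod 4$ in filtration $0$ you must check that $a_\ell$ does not lie in the image of $\rho\cdot\colon k_1^M(\bbQ)\to k_2^M(\bbQ)$ at all (by Table \ref{table:Q-products} that image is spanned by $\rho^2$ and the $a_q$ with $q\equiv 3\bmod 4$, which are independent of $a_\ell$ in Milnor's exact sequence), not merely that the single product $[-1][\ell]$ vanishes.
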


\begin{proof}
  The $\rho$-Bockstein spectral sequence arises from filtering the
  cobar complex $\calC^*(\bbQ)$ by powers of $\rho$. The $s$th term of
  the cobar complex is
  $\calC^s(\bbQ) = H_{**}(\bbQ)\otimes \calA_{**}(\bbQ)^{\otimes s}$,
  where the tensor products are taken over $H_{**}(\bbQ)$, taking care
  to use the left and right actions of $H_{**}(\bbQ)$ on
  $\calA_{**}(\bbQ)$ arising from the left and right units $\eta_L$
  and $\eta_R$. Any class $a[x_1\vert \cdots \vert x_s]$ can be
  reduced to a sum of monomials $b[y_1 \vert \cdots \vert y_s]$ where
  each $y_i$ is a monomial in
  $\bbZ/2[\tau_0,\tau_1,\ldots, \xi_1, \xi_2,\ldots]$.  The class
  $\tau$ is killed after inverting $h_1$, hence every element of
  $\calC^s(\bbQ)$ is a sum of monomials
  $b[y_1 \vert \cdots \vert y_s]$ where each $y_i$ is a monomial in
  $\bbZ/2[\tau_0,\tau_1,\ldots, \xi_1, \xi_2,\ldots]$ and
  $b \in k^M_*(\bbQ)$. The filtration of the cobar complex now is
  determined by the filtration of $k^M_*(\bbQ)$ by powers of $\rho$.
\end{proof}

\begin{proposition}
  The differentials for the $h_1$-inverted $\rho$-Bockstein spectral
  sequence over $\bbQ$ are determined by
  $d_{2^n-1}(v_1^{2^n}) = h_1^{2^n}\rho^{2^n-1}v_n$ for $n\geq 2$ and
  $d_r(v_n) = 0$ for $r\geq 1$ and $n\geq 2$. See figure
  \ref{fig:Q-RhoBSSInfinity} for a chart of the $E_{\infty}$ page up
  to Milnor-Witt stem 15.
\end{proposition}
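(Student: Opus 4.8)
The plan is to apply the motivic Hasse principle, comparing the $h_1$-inverted $\rho$-Bockstein spectral sequence over $\bbQ$ with the one over $\bbR=\bbQ_\infty$; by Proposition \ref{prop:F-ext} the $h_1$-inverted $\rho$-Bockstein spectral sequence collapses over each $p$-adic completion $\bbQ_p$, so $\bbR$ is the only completion contributing differentials. The spectral sequence $\BSS(\bbQ)[h_1^{-1}]$ is obtained from the multiplicative filtration of the cobar complex by powers of the central class $\rho$, hence it is multiplicative and satisfies the Leibniz rule; the classes $h_1^{\pm 1}$, $\rho$, and the Milnor $K$-theory classes $[p],a_\ell$ lift to $\Ext(\bbQ)[h_1^{-1}]$ in Adams filtration $0$ or $1$ and so are permanent cycles. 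Since these classes together with $P=v_1^4,v_2,v_3,\dots$ generate the $E_1$-page as an algebra, it suffices to compute $d_r(v_n)$ and $d_r(v_1^{2^n})$.

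First, $d_r(v_n)=0$ for all $r\geq 1$, $n\geq 2$, by tri-degree. A nonzero differential on $v_n$ would have $\deg=(f,t,c)=(2,2^n-2,1)$, but on any page a nonzero monomial $\kappa\otimes m$ with $\kappa\in k^M_*(\bbQ)$ of Milnor degree $d$ and $m=h_1^k P^a\prod v_i^{b_i}$ has Chow weight $d+4a+\sum b_i$, so Chow weight $1$ forces $a=0$ and $d+\sum b_i=1$; then $m$ is a power of $h_1$ (Milnor-Witt stem $0$) or $m=h_1^k v_i$ (odd Milnor-Witt stem $2^i-1$), neither of which has Milnor-Witt stem equal to the even positive integer $2^n-2$. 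Since $v_n$ has $\rho$-filtration $0$ it is never a boundary, so it is a permanent cycle over $\bbQ$ (and, by the identical argument, over $\bbR$). A similar tri-degree check gives $d_1(v_1^{2^n})=0$ and $d_1(P)=0$, so $d_1\equiv 0$ over both fields.

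The key input is injectivity of the comparison map. Base change along $\bbQ\hookrightarrow\bbR$ gives a map of spectral sequences $\phi:\BSS(\bbQ)[h_1^{-1}]\to\BSS(\bbR)[h_1^{-1}]$, induced on $E_1$-pages by the ring map $k^M_*(\bbQ)\to k^M_*(\bbR)=\bbF_2[\rho]$ (with $\rho\mapsto\rho$, $[p]\mapsto 0$, $a_\ell\mapsto 0$) and the identity on $\Ext(\bbC)[h_1^{-1}]$. Using the product relations of Table \ref{table:Q-products} and the Hasse injectivity $k^M_m(\bbQ)\hookrightarrow\bigoplus_\nu k^M_m(\bbQ_\nu)$ --- which for $m\geq 3$ is the isomorphism $k^M_m(\bbQ)\cong k^M_m(\bbR)=\bbF_2\{\rho^m\}$ --- one checks that $\rho^2\cdot[p]=0$ in $k^M_*(\bbQ)$ for every prime $p$, hence $\rho^j k^M_*(\bbQ)=\bbF_2[\rho]\cdot\rho^j$ for all $j\geq 2$. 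Therefore $\phi$ restricts to an isomorphism between the sub-spectral sequences of classes in $\rho$-filtration $\geq 2$; in particular it is injective there on every page.

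Now $v_1^{2^n}=P^{2^{n-2}}$ sits in $\rho$-filtration $0$, so $d_r(v_1^{2^n})$ lands in $\rho$-filtration $r$. For $2\leq r<2^n-1$, the vanishing $d_r(v_1^{2^n})=0$ over $\bbR$ (part of the Guillou--Isaksen computation \cite{GI-Real}) transports to $\bbQ$ by injectivity of $\phi$ in those filtrations; together with $d_1(v_1^{2^n})=0$, the class $v_1^{2^n}$ survives to $E_{2^n-1}$. The differential $d_{2^n-1}(v_1^{2^n})=h_1^{2^n}\rho^{2^n-1}v_n$ over $\bbR$ --- which is in any case the only possibility compatible with the tri-degree --- has target in $\rho$-filtration $2^n-1\geq 2$, so naturality and injectivity of $\phi$ force $d_{2^n-1}(v_1^{2^n})=h_1^{2^n}\rho^{2^n-1}v_n$ over $\bbQ$ as well. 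All remaining differentials follow from the Leibniz rule. The main obstacle is twofold: importing the differentials over the real completion from \cite{GI-Real}, and the $k^M_*(\bbQ)$ bookkeeping --- concretely the vanishing $\rho^2[p]=0$ --- needed for injectivity of $\phi$ in high $\rho$-filtration; once these are in place everything else is forced by the tri-degree and the Leibniz rule.
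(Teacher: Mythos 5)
Your proof is correct and follows essentially the same route as the paper: compare via the motivic Hasse principle to the completions, note that the $h_1$-inverted $\rho$-Bockstein spectral sequence collapses over each $\bbQ_p$, and import the differentials $d_{2^n-1}(v_1^{2^n})=h_1^{2^n}\rho^{2^n-1}v_n$ from Guillou--Isaksen's computation over $\bbR$. The only (harmless) difference is that you secure injectivity by mapping to $\bbR$ alone in $\rho$-filtration at least $2$ (via $\rho^2[p]=0$), whereas the paper uses injectivity of the map to the product over all completions at the $E_1$ page; your added tridegree argument for $d_r(v_n)=0$ is a nice explicit supplement.
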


\begin{proof}
  The injection $k_*^M(\bbQ) \to \prod_{\nu} k_*^M(\bbQ_{\nu})$
  extends to an injection of $h_1$-inverted $\rho$-Bockstein spectral
  sequences at the $E_1$ page.
  \begin{equation*}
    \BSS(\bbQ)[h_1^{-1}] \to \prod_{\nu} \BSS(\bbQ_{\nu})[h_1^{-1}]
  \end{equation*}
  The differentials in the $h_1$-inverted $\rho$-Bockstein spectral
  sequence over $\bbQ_p$ vanish for all primes $p$. Only the
  differentials in $\BSS(\bbR)[h_1^{-1}]$ contribute to the
  differentials over $\bbQ$, and these were identified by Guillou and
  Isaksen in \cite[Lemma 3.1]{GI-Real}.
\end{proof}

\begin{proposition}
  The $h_1$-inverted $\rho$-Bockstein spectral sequence for $\bbQ$
  converges strongly to $\Ext(\bbQ)[h_1^{-1}]$ and there are no hidden
  extensions. 
\end{proposition}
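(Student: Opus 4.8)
The plan is to establish strong convergence by exhibiting a horizontal vanishing line for the $\rho$-Bockstein spectral sequence $\BSS(\bbQ)[h_1^{-1}]$ in the Bockstein filtration degree $\epsilon$, and then to rule out hidden extensions by a degree argument combined with the injection into the product over completions. First I would observe that the $E_1$-page described in Lemma \ref{lem:Q-BSS} is concentrated in finitely many Bockstein filtrations in each Milnor-Witt stem: by the structure of $k^M_*(\bbQ)$, the classes $\rho^n$ exhaust the filtration $\epsilon = n$ for $n \geq 3$, and $\rho^n v_j$ has Milnor-Witt stem growing with $n$, so in each fixed tridegree $(f,t,c)$ only finitely many $\epsilon$ contribute. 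Since the differentials $d_r$ raise $\epsilon$ by $r$ and the $E_1$-page already vanishes for $\epsilon$ large in any fixed degree, the spectral sequence is a finite spectral sequence in each degree, hence converges strongly to $\Ext(\bbQ)[h_1^{-1}]$ — this also reidentifies the target, since the $\rho$-Bockstein spectral sequence for the cobar complex converges to $\Ext(\bbQ)$ and inverting $h_1$ is exact.

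Next I would address hidden extensions. A hidden extension would be an additive extension problem in passing from $\gr \Ext(\bbQ)[h_1^{-1}] = E_\infty$ (with respect to the $\rho$-filtration) to $\Ext(\bbQ)[h_1^{-1}]$ itself, i.e. the associated graded does not recover the group on the nose. The key tool is the injection of spectral sequences $\BSS(\bbQ)[h_1^{-1}] \hookrightarrow \prod_\nu \BSS(\bbQ_\nu)[h_1^{-1}]$ from the previous proposition, which is compatible with the $\rho$-filtration. Over each completion $\bbQ_p$ the Bockstein spectral sequence collapses (differentials vanish) and over $\bbR$ the $E_\infty$-page is known from Guillou--Isaksen; in all these cases $\Ext(\bbQ_\nu)[h_1^{-1}]$ is free as an $\bbF_2$-module on the $E_\infty$ classes with no room for extensions (everything is an $\bbF_2$-vector space, so there are no nontrivial additive extensions to begin with). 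Pulling back along the injection, any would-be hidden extension in $\BSS(\bbQ)[h_1^{-1}]$ maps to a split situation, which forces it to split.

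More carefully, the cleanest formulation is: $\Ext(\bbQ)[h_1^{-1}]$ is an $\bbF_2$-vector space (it is a module over $\bbF_2[h_1^{\pm 1}]$), so the only possible failure is multiplicative — a product of two filtration-$\epsilon$ classes landing in higher filtration than expected, or a $\rho$-multiple jumping filtration — rather than additive. Here I would invoke the degree bookkeeping already used in Proposition \ref{prop:cd2-mass} and the computation of products in $k^M_*(\bbQ)$ from Table \ref{table:Q-products}: the products $a_\ell$, $\rho^n$, and $[p]$ have their filtrations pinned down in Lemma \ref{lem:Q-BSS}, and the relations in $k^M_*(\bbQ)$ are exactly reflected in the $E_\infty$-page, so the algebra structure on $\Ext(\bbQ)[h_1^{-1}]$ is determined. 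The main obstacle is making the ``no hidden extensions'' claim precise enough to be checkable: one must confirm that every relation and every product in $\Ext(\bbQ)[h_1^{-1}]$ is visible at $E_\infty$, and the comparison map to $\prod_\nu \BSS(\bbQ_\nu)[h_1^{-1}]$ — together with the fact that $k^M_*(\bbQ) \to \prod_\nu k^M_*(\bbQ_\nu)$ is injective and detects the product structure (Milnor's exact sequence and Table \ref{table:Q-products}) — is what carries this through, since any hidden product over $\bbQ$ would have to be hidden over some completion as well, and there it is not.
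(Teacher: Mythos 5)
Your proposal is correct and follows essentially the same route as the paper: strong convergence comes from identifying the $h_1$-inverted $\rho$-Bockstein spectral sequence with the $\rho$-filtration of the $h_1$-inverted cobar complex, where the filtration is finite in each tridegree, and the absence of hidden extensions is deduced from the embedding into $\prod_{\nu}\BSS(\bbQ_{\nu})[h_1^{-1}]$ together with the known absence of hidden extensions over $\bbR$ (Guillou--Isaksen) and over the $p$-adic completions (proposition \ref{prop:F-ext}). The only stylistic difference is that you spell out the finiteness of the filtration in each degree and the reduction of ``hidden extensions'' to multiplicative questions more explicitly than the paper does.
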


\begin{proof}
  The $h_1$-inverted $\rho$-Bockstein spectral sequence is isomorphic
  to the $\rho$-Bockstein spectral sequence obtained by filtering the
  $[\xi_1]$-inverted cobar complex $\calC^*(\bbQ)$, hence it converges
  strongly to $\Ext(\bbQ)[h_1^{-1}]$. Guillou and Isaksen have shown
  that there are no hidden extensions in the $h_1$-inverted
  $\rho$-Bockstein spectral sequence over $\bbR$ \cite[Proposition
  4.9]{GI-Real} and there are no hidden extensions in the
  $h_1$-inverted $\rho$-Bockstein spectral sequence over the other
  completions of $\bbQ$ by proposition \ref{prop:F-ext}. We therefore
  conclude there are no hidden extensions since the Hasse map embeds
  $\BSS(\bbQ)[h_1^{-1}]_{\infty} \Rightarrow \Ext(\bbQ)[h_1^{-1}]$
  into
  $\prod_{\nu} \BSS(\bbQ_{\nu})[h_1^{-1}]_{\infty} \Rightarrow
  \prod_{\nu} \Ext(\bbQ_{\nu})[h_1^{-1}]$.
\end{proof}

\begin{corollary}               
  \label{cor:Q-ext}
  $\Ext(\bbQ)[h_1^{-1}]$ is generated by the classes in table
  \ref{table:Q-ext-gens}. The relations among these generators over
  $k^M_*(\bbQ)$ include: $[\ell]P^k \cdot [q]P^j =
  [\ell]\cdot[q]P^{k+j}$ for $\ell$, $q$ primes and $k,j \geq 0$;
  $[\ell]P^{2^{n-1}k}\cdot v_n = [\ell]\cdot P^{2^{n-1}k}v_n$ for
  $\ell$ a prime, $n\geq 2$, and $k\geq 0$; the vanishing of the
  product of three or more generators of the form $[\ell]P^k$; and the
  relations which set the $\rho$-torsion of the generators.
\begin{table}[h]
  \centering
  \begin{tabular}{c|c|c|l}
    Class & $(f, t, c)$ & $\rho$-torsion & Conditions \\\hline
    $h_1^{\pm 1}$ & $(\pm 1,0,0)$ & $\infty$ \\
    $\rho$ & $(0, 0, 1)$ & $\infty$ &\\
    $[2]+\rho$ & $(0,0,1)$ & 1  & \\
    $[\ell]P^k$ & $(0,0,1)+k(4,4,4)$ & 1 & $\ell$ prime, $\ell \equiv 1\,(4)$, $k\geq 0$\\
    $[\ell]P^k$ & $(0,0,1)+k(4,4,4)$ & 2  &$\ell$ prime, $p\equiv 3\,(4)$, $k\geq 0$ \\     $[2]P^k$ & $(0,0,1)+k(4,4,4)$ & 1 & $k\geq 1$ \\
    $P^{2k} v_2$ & $(1,3,1) + k(8,8,8)$ & 3 & $k\geq 0$ \\
    $P^{4k}v_3$ & $(1,7,1) + k(16,16,16)$  & 7 & $k\geq 0$ \\
    $P^{8k}v_4$ & $(1,15,1) + k(32,32,32)$ & 15 & $k\geq 0$ \\
    $\cdots$ & $\cdots$ & $\cdots$ & $\cdots$
  \end{tabular} 
  \caption{Generators of $\Ext(\bbQ)[h_1^{-1}]$.}
  \label{table:Q-ext-gens}
\end{table}
\end{corollary}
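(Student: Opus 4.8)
The plan is to run the $h_1$-inverted $\rho$-Bockstein spectral sequence $\BSS(\bbQ)[h_1^{-1}]$ to its $E_\infty$-page and then transport the answer to $\Ext(\bbQ)[h_1^{-1}]$. All the inputs are at hand: Lemma \ref{lem:Q-BSS} gives the $E_1$-page together with the $\rho$-filtration of each generator; the two preceding propositions give the differentials $d_{2^n-1}(v_1^{2^n}) = h_1^{2^n}\rho^{2^n-1}v_n$ for $n \geq 2$, with all other differentials vanishing on generators (so in particular $d_1 = d_2 = 0$), and the strong convergence $\BSS(\bbQ)[h_1^{-1}] \Rightarrow \Ext(\bbQ)[h_1^{-1}]$ with no hidden extensions. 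Granting these, the corollary reduces to computing $E_\infty$ as a module over $k^M_*(\bbQ) \otimes \bbF_2[h_1^{\pm 1}]$ and choosing multiplicative representatives; the absence of hidden extensions guarantees that the relations visible on $E_\infty$, together with those already present on $E_1$ (the products of Table \ref{table:Q-products} in $k^M_*(\bbQ)$ and the polynomial structure $\Ext(\bbC)[h_1^{-1}] = \bbF_2[h_1^{\pm 1}, P, v_2, v_3, \ldots]$), lift to $\Ext(\bbQ)[h_1^{-1}]$.

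The first step is to sort out which $E_1$-generators survive. The classes $h_1^{\pm 1}$, $\rho = [-1]$, the symbols $[p]$, and the $v_n$ with $n \geq 2$ are $d_r$-cycles for every $r$, and none can be a boundary for degree reasons (each lies in $\rho$-filtration $0$ or $1$, while a $d_r$-boundary needs a source in strictly negative filtration since $d_1 = d_2 = 0$), so all of them survive. The only generator supporting a differential is $P = v_1^4$, and by the Leibniz rule every differential is forced by $d_{2^n-1}(v_1^{2^n}) = h_1^{2^n}\rho^{2^n-1}v_n$. Since $v_1^{2^n} = P^{2^{n-2}}$, this relation (i) makes the target $\rho^{2^n-1}v_n$ a boundary on the $E_{2^n}$-page, so that $\rho^{2^n-1}v_n = 0$ while $\rho^{2^n-2}v_n$ survives on $E_\infty$; and (ii) forces $P^{2^{n-2}}$, and in fact every positive power of $P$, to die, so no positive power of $P$ survives on its own. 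What does survive, beyond the permanent cycles listed, is: the classes $[p]P^k$ for every prime $p$ and $k \geq 0$, because $\rho^2[p] = 0$ in $k^M_*(\bbQ)$ by Table \ref{table:Q-products} (indeed $\rho[p] = 0$ already when $p \equiv 1 \bmod 4$), so $[p]$ annihilates the powers of $\rho$ occurring in every Leibniz error term out of $[p]P^k$; and the classes $P^{2^{n-1}k}v_n$ for $k \geq 0$, because an even power of $P$ is a $d$-cycle in characteristic $2$ and the remaining error terms are multiples of $\rho^{2^j-1}v_j$, which vanish once that relation has taken effect. (Equivalently one may verify survival through the injection $\BSS(\bbQ)[h_1^{-1}] \hookrightarrow \prod_\nu \BSS(\bbQ_\nu)[h_1^{-1}]$, noting that the $\rho$-Bockstein spectral sequence has no differentials over $\bbQ_p$ and that each $[p]$ restricts to $0$ over $\bbR$.) This same analysis reads off the $\rho$-torsion orders of Table \ref{table:Q-ext-gens}: $P^{2^{n-1}k}v_n$ has $\rho$-torsion exactly $2^n - 1$, while $[p]P^k$ has $\rho$-torsion $1$ or $2$ according to the residue of $p$ mod $4$, and $h_1^{\pm 1}$, $\rho$ are $\rho$-torsion free.

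It then remains to record the multiplicative relations. The identities $[\ell]P^k \cdot [q]P^j = [\ell][q]P^{k+j}$ and $[\ell]P^{2^{n-1}k} \cdot v_n = [\ell] \cdot P^{2^{n-1}k}v_n$ hold verbatim on $E_1$, hence in $\Ext(\bbQ)[h_1^{-1}]$ by the no-hidden-extensions statement; the vanishing of a product of three (or more) generators of the form $[\ell]P^k$ follows since such a product lands in $k^M_n(\bbQ)$ with $n \geq 3$, which is generated by $\rho^n$, together with $\rho^2[\ell] = 0$; and the $\rho$-torsion relations are the ones just determined. One records $[2] + \rho$ rather than $[2]$ among the generators in order to fix a convenient lift of the relevant $E_\infty$-class; lifts differ only by classes of higher $\rho$-filtration, so this changes nothing. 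Assembling the permanent cycles, the two families above, and these relations produces Table \ref{table:Q-ext-gens} and the relations in the statement; the resulting $E_\infty$-page is depicted in Figure \ref{fig:Q-RhoBSSInfinity}.

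The main obstacle is the multiplicative bookkeeping inside the spectral sequence. Applying the Leibniz rule correctly requires knowing, page by page, precisely which monomials in $P, v_2, v_3, \ldots$ are still nonzero after each relation $\rho^{2^j-1}v_j = 0$ has taken effect, and then checking that this leaves $P^{2^{n-1}k}v_n$ and $[p]P^k$ alive while killing their odd-$P$ and threefold-symbol companions; organizing this across infinitely many pages and families is the real content of the argument. By contrast, the passage from $E_\infty$ to the ring $\Ext(\bbQ)[h_1^{-1}]$ is handled entirely by the convergence-and-no-hidden-extensions proposition, so no genuine extension problems remain.
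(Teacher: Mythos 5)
Your overall strategy---reading off the $E_\infty$-page of the $h_1$-inverted $\rho$-Bockstein spectral sequence over $\bbQ$ from Lemma \ref{lem:Q-BSS} and the differentials $d_{2^n-1}(v_1^{2^n})=h_1^{2^n}\rho^{2^n-1}v_n$ via the Leibniz rule, then transporting to $\Ext(\bbQ)[h_1^{-1}]$ using strong convergence and the absence of hidden extensions---is sound and lands on the correct table and relations. It differs from the paper's route in emphasis: the paper identifies the generators by comparing $\Ext(\bbQ)[h_1^{-1}]$ with $\Ext(\bbR)[h_1^{-1}]$ and citing Guillou--Isaksen \cite[Theorem 4.10]{GI-Real}, rather than redoing the Leibniz bookkeeping over $\bbQ$. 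Your version is more self-contained and makes visible why the classes $[\ell]P^k$ survive over $\bbQ$ (they die over $\bbR$, so the bare comparison with $\bbR$ cannot detect them and one must fall back on the $p$-adic places or the explicit $E_1$-page anyway); the cost is that you must control error terms across infinitely many pages, which is exactly where your write-up is thinnest.

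One step, as written, is off by a page. To show $P^{2^{n-1}k}v_n$ is a permanent cycle you say the error terms are ``multiples of $\rho^{2^j-1}v_j$, which vanish once that relation has taken effect.'' But $\rho^{2^j-1}v_j$ only becomes a boundary on $E_{2^j}$, one page \emph{after} the differential $d_{2^j-1}$ you need to kill, so this does not establish vanishing on $E_{2^j-1}$. The correct observation is that the error term $h_1^{2^j}\rho^{2^j-1}P^{2^{n-1}k-2^{j-2}}v_jv_n$ (where necessarily $j\geq n+1$, since $2^{n-1}$ divides $2^{n-1}k$) is also a multiple of $\rho^{2^n-1}v_n$, a relation already in force on $E_{2^n}$, which is in time because $2^n\leq 2^j-1$: explicitly the error term is $h_1^{2^j-2^n}\,d_{2^n-1}(\rho^{2^j-2^n}P^{2^{n-1}k-2^{j-2}+2^{n-2}}v_j)$, and the exponent $2^{n-1}k-2^{j-2}+2^{n-2}=2^{n-2}(2k-2^{j-n}+1)$ has $2$-adic valuation exactly $n-2$, so this class really does support a $d_{2^n-1}$ with the stated target. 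Your parenthetical alternative---checking survival through the injection into $\prod_\nu\BSS(\bbQ_\nu)[h_1^{-1}]$, where all differentials vanish $p$-adically and the class restricts to a known permanent cycle over $\bbR$---also repairs this and is closer to the paper's own mechanism; with that patch (or the explicit identity above) the argument is complete.
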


\begin{proof}
  The generators can be determined by comparing $\Ext(\bbQ)[h_1^{-1}]$
  to $\Ext(\bbR)[h_1^{-1}]$, and the latter was determined by Guillou
  and Isaksen in \cite[Theorem 4.10]{GI-Real}. The relations stated
  are present in the $\rho$-Bockstein spectral sequence and persist to
  $\Ext(\bbQ)$.
\end{proof}

The differentials in $\MASS(\bbQ)[h_1^{-1}]$, the $h_1$-inverted Adams
spectral sequence over $\bbQ$, are determined by the differentials
obtained from the comparison to $\bbQ_p$ and $\bbR$.  See figures
\ref{fig:Q-RhoBSSInfinity} and \ref{fig:Q-E3} for a depiction of the
$E_2$ and $E_3$ pages up to Milnor-Witt stem 15.

\begin{proposition}
  \label{prop:Q-Einfty}
  The $d_2$ differential in $\MASS(\bbQ)[h_1^{-1}]$ is determined by
  the Leibniz rule from the equations $d_2(P^{2^{n-1}k}v_n) =
  P^{2^{n-1}k}v_{n-1}^2$ for $k\geq 0$ and $n\geq 3$ and the vanishing
  of $d_2$ on the remaining generators. For $r\geq 3$, the
  differential $d_r$ in $\MASS(\bbQ)[h_1^{-1}]$ is determined by the
  Leibniz rule from the equations
  \begin{equation*}
    d_r(\rho^{2^n-2^{n-r+2}-r+2}P^{2^{n-1}k}v_n) = P^{2^{n-1}k+2^{n-2}-2^{n-r}} v_{n-r+1}^2
  \end{equation*}
  for $n \geq r+1$ and the vanishing of $d_r$ on the remaining
  generators.
\end{proposition}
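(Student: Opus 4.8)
The plan is to determine the differentials in $\MASS(\bbQ)[h_1^{-1}]$ by comparison with the completions, exactly as in the proof of the $\rho$-Bockstein differentials and following the Ormsby--{\O}stv{\ae}r strategy. First I would set up the Hasse injection at the level of Adams $E_2$-pages: the inclusion $k_*^M(\bbQ)\hookrightarrow \prod_\nu k_*^M(\bbQ_\nu)$ together with the description of $\Ext(\bbQ)[h_1^{-1}]$ in corollary \ref{cor:Q-ext} and of $\Ext(\bbQ_\nu)[h_1^{-1}]$ in proposition \ref{prop:F-ext} yields an injection of spectral sequences $\MASS(\bbQ)[h_1^{-1}]\to\prod_\nu\MASS(\bbQ_\nu)[h_1^{-1}]$. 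Over each $p$-adic completion $\bbQ_p$, proposition \ref{prop:cd2-mass} gives $d_2(v_n)=h_1^2v_{n-1}^2$ for $n\geq 3$ and $\MASS(\bbQ_p)[h_1^{-1}]$ collapsing at $E_3$; over $\bbR$ the differentials are those of Guillou and Isaksen \cite[\S3]{GI-Real}, which by inspection have the form $d_r(\rho^{*}v_n)=P^{*}v_{n-r+1}^2$ with the indicated exponents. So the candidate differentials over $\bbQ$ are forced on the generators $P^{2^{n-1}k}v_n$, and the Leibniz rule propagates them.

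The key steps, in order: (1) Establish the $E_2=E_2$ identification with $\Ext(\bbQ)[h_1^{-1}]$ and record which generators are permanent cycles for filtration reasons (the classes $h_1^{\pm1}$, $\rho$, $[\ell]P^k$, since nothing in the appropriate tridegree can be their target). (2) For the $d_2$ differential, observe that $d_2(P^{2^{n-1}k}v_n)$ must agree with $P^{2^{n-1}k}v_{n-1}^2$ modulo the kernel of the map to $\prod_p\MASS(\bbQ_p)[h_1^{-1}]$; then argue that this kernel is concentrated in degrees with $t+c\not\equiv 0\bmod 4$ (the same mod-$4$ obstruction used in proposition \ref{prop:cd2-mass} and proposition \ref{prop:F-ext}), whereas $P^{2^{n-1}k}v_{n-1}^2$ has $t+c\equiv 0\bmod 4$, so the differential holds on the nose. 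The classes $P^{2^{n-1}k}v_n$ not hit by the $p$-adic $d_2$ (namely those involving $\rho$-power multiples) support no $d_2$ for degree reasons. (3) For $r\geq 3$, run the same comparison with $\BSS(\bbR)[h_1^{-1}]$ surviving to the Adams $E_r$-page: a class $\rho^{2^n-2^{n-r+2}-r+2}P^{2^{n-1}k}v_n$ survives the $p$-adic spectral sequences (which collapsed at $E_3$), so the only nonzero component of its $d_r$ lives in the $\bbR$-factor, where it is $P^{2^{n-1}k+2^{n-2}-2^{n-r}}v_{n-r+1}^2$; combined with the injectivity of the Hasse map on $E_r$ this pins down $d_r$ over $\bbQ$. (4) Apply the Leibniz rule over $k_*^M(\bbQ)$ and use the product relations of corollary \ref{cor:Q-ext} to conclude the differentials on all of $\MASS(\bbQ)[h_1^{-1}]$ are as stated, with the remaining generators permanent cycles.

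The main obstacle I anticipate is step (3): verifying that the higher $\rho$-Bockstein/Adams differentials over $\bbR$ really do propagate without interference, because over $\bbQ$ the relevant classes are products $\alpha\,\widetilde{y}$ with $\alpha\in k_*^M(\bbQ)$ and $\widetilde{y}$ pulled back from $\Ext(\bbC)[h_1^{-1}]$, and one must check that the $\rho$-torsion orders of the various $[\ell]P^k$ generators (which differ for $\ell\equiv 1$ versus $\ell\equiv 3\bmod 4$, see table \ref{table:Q-ext-gens}) do not produce extra surviving classes or permit alternative targets in the tridegree of $d_r$. The bookkeeping of the exponents $2^n-2^{n-r+2}-r+2$ and $2^{n-1}k+2^{n-2}-2^{n-r}$ must be checked against the degree shift $(r,1,-1,0)$ of $d_r$, but once the degrees match and the Hasse map is injective on each page, the identification is forced; the mod-$4$ Chow-weight obstruction of proposition \ref{prop:cd2-mass} rules out the contribution of $\ker\Phi$-type error terms at every stage. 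A secondary point is confirming that there are no hidden extensions or late differentials beyond those visible over the completions, which follows since the Hasse map embeds $\MASS(\bbQ)[h_1^{-1}]$ into a product of spectral sequences each collapsing by a bounded page.
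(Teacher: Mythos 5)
Your proposal follows essentially the same route as the paper: set up the Hasse injection $\Ext(\bbQ)[h_1^{-1}]\hookrightarrow\prod_\nu\Ext(\bbQ_\nu)[h_1^{-1}]$ (and its persistence to later pages), import $d_2(v_n)=h_1^2v_{n-1}^2$ from the $p$-adic completions, and read off the higher $d_r$ from the $\bbR$-factor since the $p$-adic spectral sequences collapse at $E_3$; this is exactly the argument in the paper, which cites \cite[Lemma 5.2, Lemma 5.8]{GI-Real} for the real differentials. One correction, though: your auxiliary claim in step (2) that the kernel of the map to $\prod_p\MASS(\bbQ_p)[h_1^{-1}]$ (finite places only) is concentrated in degrees with $t+c\not\equiv 0\bmod 4$ is false --- for instance $\rho^4$ lies in that kernel (since $\rho^3=0$ in $k^M_*(\bbQ_2)$ and $\rho^2=0$ or $\rho=0$ at odd places) and has $t+c=4\equiv 0\bmod 4$, and the same goes for $\rho^4$ times any class from $\Ext(\bbC)[h_1^{-1}]$. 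The mod-$4$ obstruction is the right tool for the comparison to the algebraic closure in proposition \ref{prop:cd2-mass}, but it does not transfer to the $p$-adic-only Hasse map. This does not sink the proof: the full Hasse map including the real place is injective on each page (which you do set up, and which is what the paper actually uses), so the differential is pinned down by its images over all completions with no kernel analysis needed; just delete the mod-$4$ remark and invoke the injectivity directly.
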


\begin{proof}
  $\Ext(\bbQ)[h_1^{-1}]$ injects into the product
  $\prod_{\nu}\Ext(\bbQ_{\nu})[h_1^{-1}]$ under the base change maps
  obtained from $\bbQ \to \bbQ_{\nu}$. The map is seen to be injective
  by the explicit calculation of $\Ext(\bbQ)[h_1^{-1}]$ given in
  corollary \ref{cor:Q-ext}, $\Ext(\bbQ_p)[h_1^{-1}]$ in proposition
  \ref{prop:F-ext}, and $\Ext(\bbR)[h_1^{-1}]$ in \cite[Theorem
  4.10]{GI-Real}. The differentials $d_2(v_n) = v_{n-1}^2$ for $n\geq
  3$ over $\bbQ_p$ imply that the class $d_2(P^{2^{n-1}k}v_n)$ must
  map to $ d_2(P^{2^{n-1}k}v_n) = P^{2^{n-1}k}v_{n-1}^2$ in
  $\Ext(\bbQ_p)[h_1^{-1}]$. Comparison to $\bbR$ also shows that the
  differential $d_2(P^{2^{n-1}k}v_n)$ maps to $P^{2^{n-1}k}v_{n-1}^2$
  in $\Ext(\bbR)[h_1^{-1}]$ for $n\geq 3$, as determined by Guillou
  and Isaksen \cite[Lemma 5.2]{GI-Real}. The differential $d_2$ over
  $\bbQ$ vanishes on the classes $[\ell]P^k$ by comparison to $\bbQ_p$
  for all $p$. Finally, $d_2$ vanishes on all elements of
  $k_*^M(\bbQ)$ for degree reasons.  This accounts for all of
  irreducible classes of $\Ext(\bbQ)[h_1^{-1}]$; the generators for
  $\MASS(\bbQ)_3[h_1^{-1}]$ are given in table \ref{table:Q-e3}.  Note
  that the classes $P^{2(2j+1)}v_2^2$ also survive but decompose as
  the product $P^{2(2j+1)}v_2 \cdot v_2$.
\begin{table}[h]
  \centering
  \begin{tabular}{c|c|c|l}
    Class & $(f, t, c)$ & $\rho$-torsion & Conditions \\\hline
    $h_1^{\pm 1}$ & $(\pm 1,0,0)$ & $\infty$ \\
    $\rho$ & $(0, 0, 1)$ & $\infty$ &\\
    $[2]+\rho$ & $(0, 0, 1)$ & 1 &\\
    $[\ell]P^k$ & $(0,0,1) +k(4,4,4)$ & 1 & $\ell$ prime,  $\ell\equiv 1\,(4)$, $k\geq 0$ \\
    $[\ell]P^k$ & $(0,0,1)+k(4,4,4)$ & 2  &$\ell$ prime, $\ell\equiv 3\,(4)$, $k\geq 0$\\ 
    $[2]P^k$ & $(0,0,1)+k(4,4,4)$ & 1  & $k\geq 1$\\
    $P^{2k} v_2$ & $(1,3,1) +k(8,8,8)$ & 3 & $k\geq 0$ \\
    $\rho^3 P^{4k}v_3$ & $(1,7,4)+k(16,16,16)$  & 4 & $k\geq 0$ \\
    $\rho^7 P^{8k}v_4$ & $(1, 15, 8)+k(32,32,32)$ & 8 & $k\geq 0$ \\
    $\cdots$ & $\cdots$&$\cdots$ & $\cdots$\\ 
    $P^{4(2j+1)}v_3^2$ & $(2, 14, 2) + (2j+1)(4,4,4)$ & $7$ & $j\geq 0$ \\ 
    $P^{8(2j+1)}v_4^2$ & $(2,30,2) + (2j+1)(4,4,4) $ & $15$ & $j\geq 0$ \\
    $\cdots$ & $\cdots$ & $\cdots$ & $\cdots$ \\
  \end{tabular} 
  \caption{Generators of $\MASS(\bbQ)[h_1^{-1}]_3$.}
  \label{table:Q-e3}
\end{table}

  The Hasse map
  \begin{equation*}
    \MASS(\bbQ)_3[h_1^{-1}] \to \prod_{\nu} \MASS(\bbQ_{\nu})_3[h_1^{-1}]
  \end{equation*}
  is still injective. Over the $p$-adic fields, all further
  differentials vanish, and over $\bbR$ the differentials are
  determined by Guillou and Isaksen \cite[Lemma 5.8]{GI-Real}; these
  comparisons determine the remaining differentials.
\end{proof}

\begin{proposition}
  \label{prop:generators}
  $\MASS(\bbQ)[h_1^{-1}]_{\infty}$ is generated over $k_*^M(\bbQ)$ by
  the classes $\rho^{2^n - n - 2} P^{2^{n-1}k}v_n$ for $k\geq 0$ and
  $n\geq 2$, which has degree
  $(2^{n+1}k+1, 2^{n+1}k+2^n-1, 2^{n+1}k+2^n-n-1)$ and $\rho$-torsion
  $n+1$. See table \ref{table:Q-generators} for some low degree
  generators and figure \ref{fig:Q-EInfinity} for a chart of the
  $E_{\infty}$ page up to Milnor-Witt stem 15.
\end{proposition}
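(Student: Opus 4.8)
The plan is to read off $\MASS(\bbQ)[h_1^{-1}]_\infty$ directly from the $E_2$-page $\Ext(\bbQ)[h_1^{-1}]$ of corollary \ref{cor:Q-ext} by running the differentials determined in proposition \ref{prop:Q-Einfty}. The key preliminary observation is that these differentials respect a coarse splitting of monomials by their $v$-content. Let $A$ be the subalgebra generated by $k_*^M(\bbQ)$, $h_1^{\pm 1}$, and $P$; every element of $A$ is a permanent cycle, and no differential in proposition \ref{prop:Q-Einfty} lands in $A$, since each $d_r$ carries a factor $v_n$ on the source to a factor $v_{n-r+1}^2$ on the target. Modulo multiplication by $A$, every monomial of $\Ext(\bbQ)[h_1^{-1}]$ is $1$, or $v_n$ for a unique $n\ge 2$, or $v_m^2$ for a unique $m\ge 2$; so the computation reduces to tracking, for each $n\ge 2$ and each admissible power $P^{2^{n-1}k}$, how much of the $\rho$-tower on $P^{2^{n-1}k}v_n$ survives, together with checking that every non-decomposable $v_m^2$-class is eventually hit.

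I would carry this out by induction on the page $r$. On $E_2$ the class $P^{2^{n-1}k}v_n$ has $\rho$-torsion $2^n-1$ (table \ref{table:Q-ext-gens}), and a differential acts on the $v_n$-family only on pages $r$ with $2\le r\le n-1$ (the condition $n\ge r+1$ of proposition \ref{prop:Q-Einfty}, which for $r=2$ reads $n\ge 3$). Because $\rho$ is a permanent cycle, $d_r$ is $\rho$-linear, so on page $r$ it restricts to an isomorphism from an initial segment of the source $\rho$-tower onto the full $\rho$-tower of the target $v_{n-r+1}^2$-class; hence the $\rho$-torsion of $P^{2^{n-1}k}v_n$ drops by exactly the $\rho$-torsion of that target. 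The combinatorial crux is that each such target $P^{2^{n-1}k+2^{n-2}-2^{n-r}}v_{n-r+1}^2$ still carries its full $\rho$-torsion $2^{n-r+1}-1$ on $E_r$ — i.e. has not been hit by an earlier differential — which is a disjointness statement about the $2$-adic expansions of the exponents $2^{n-1}k+2^{n-2}-2^{n-r}$ across the relevant $(n,r,k)$. Granting that, the total amount killed by the time we reach $E_n=E_\infty$ is $\sum_{r=2}^{n-1}(2^{n-r+1}-1)=2^n-n-2$, leaving the survivor $\rho^{2^n-n-2}P^{2^{n-1}k}v_n$ with $\rho$-torsion $(2^n-1)-(2^n-n-2)=n+1$; its degree is the additive sum $(2^n-n-2)(0,0,1)+2^{n-1}k(4,4,4)+(1,2^n-1,1)=(2^{n+1}k+1,\,2^{n+1}k+2^n-1,\,2^{n+1}k+2^n-n-1)$. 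The intermediate values (survivor $\rho^{2^{n-1}-1}P^{2^{n-1}k}v_n$ on $E_3$) agree with table \ref{table:Q-e3}, and the final answer with table \ref{table:Q-generators}, which serve as consistency checks.

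The remaining input — the exact $\rho$-torsion of each target $v_m^2$-class, equivalently the precise lengths of the killed segments — I would pin down by the Hasse comparison already used for the differentials in proposition \ref{prop:Q-Einfty}: the base-change map $\MASS(\bbQ)[h_1^{-1}]\to\prod_\nu\MASS(\bbQ_\nu)[h_1^{-1}]$ is injective on every page, the $h_1$-inverted Adams spectral sequences over the $p$-adic fields collapse at $E_3$ by proposition \ref{prop:cd2-mass}, and the one over $\bbR$ was computed by Guillou and Isaksen; reading the $\rho$-torsions of the $v_m^2$-classes off from those known targets fixes the collapse lengths over $\bbQ$, forces the survivors to be exactly as asserted, and shows simultaneously that every $v_m^2$-class that is not an $A$-multiple of a product of surviving $v_\bullet$-classes (for instance $P^{2(2j+1)}v_2^2=P^{2(2j+1)}v_2\cdot v_2$, as noted in proposition \ref{prop:Q-Einfty}) is hit, so no new generators arise from squares. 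Assembling the surviving $v_n$-classes with $A$ then yields the claimed description of $\MASS(\bbQ)[h_1^{-1}]_\infty$ as generated over $k_*^M(\bbQ)$ (together with $h_1^{\pm 1}$ and $P$) by the classes $\rho^{2^n-n-2}P^{2^{n-1}k}v_n$. The main obstacle throughout is this $2$-adic bookkeeping of which differential hits which square, with what tower length, so that the killed segments add up to $2^n-n-2$; once that is in hand the rest is the additive degree computation and the observation that $A$ consists of permanent cycles.
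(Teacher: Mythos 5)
Your argument is correct and uses exactly the same inputs as the paper, whose proof is a one-line citation of the differential analysis in proposition \ref{prop:Q-Einfty} together with Guillou and Isaksen's Proposition 5.9 over $\bbR$ (transported via the Hasse map). You have simply made explicit the $\rho$-tower bookkeeping that the paper leaves implicit, and your arithmetic $\sum_{r=2}^{n-1}(2^{n-r+1}-1)=2^n-n-2$ and the resulting torsion $n+1$ check out against tables \ref{table:Q-e3} and \ref{table:Q-generators}.
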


\begin{proof}
  This is a consequence of the differential analysis of proposition
  \ref{prop:Q-Einfty} and the result of Guillou and Isaksen
  \cite[Proposition 5.9]{GI-Real}.
\end{proof}

\begin{table}[h]
  \centering
  \begin{tabular}{c|c|c|c}
    Class & $(f, t, c)$ & $\rho$-torsion & Conditions \\\hline
    $h_1^{\pm 1}$ & $(\pm 1,0,0)$ & $\infty$ \\
    $\rho$ & $(0, 0, 1)$ & $\infty$ &\\
    $[2] + \rho$ & $(0, 0, 1)$ & $1$ &\\
    $[\ell]P^k$ & $(0,0,1)+k(4,4,4)$ & 1 &$\ell$ prime,  $\ell\equiv 1\,(4)$, $k\geq 0$ \\
    $[\ell]P^k$ & $(0,0,1)+k(4,4,4)$ & 2  &$\ell$ prime,$\ell\equiv 3\,(4)$, $k\geq 0$ \\
    $[2]P^k$ & $(0,0,1)+k(4,4,4)$ & 1  &$k\geq 1$\\
    $P^{2k} v_2$ & $(1, 3, 1) + k(8,8,8)$ & 3 & $k\geq 0$ \\
    $\rho^3 P^{4k}v_3$ & $(1,7,4)+k(16,16,16)$  & 4 & $k\geq 0$ \\
    $\rho^{10} P^{8k} v_4$ & $(1, 15, 11) + k(32,32,32)$ & 5 & $k\geq 0$ \\
    $\cdots$ & $\cdots $ & $\cdots$ & $\cdots$ \\
  \end{tabular} 
  \caption{Generators of $\mathfrak{M}(\bbQ)[h_1^{-1}]_{\infty}$.}
  \label{table:Q-generators}
\end{table}

\begin{definition}
  \label{def:M}
  Let $M$ be the submodule of $W(\bbQ)\twocomp[\eta^{\pm 1}]$
  generated by the rank one forms $\ell\cdot X^2$ for $\ell$ a prime.
  As an abelian group, $M$ is isomorphic to
  $\bbZ/2 \oplus \bigoplus_{p \equiv 3\,(4)} \bbZ/4 \oplus
  \bigoplus_{p\equiv 1\,(4)} (\bbZ/2)^2[\eta^{\pm 1}]$.
\end{definition}

Following the notational convention of Isaksen and Guillou
\cite[\S7]{GI-Real}, write $P^{2^{n-1}k}\lambda_n$ for a class in
$\Pihat_{t}(\bbQ)[\eta^{-1}]$ detected by
$\rho^{2^n-n-2}P^{2^{n-1}k}v_n$ in $\MASS(\bbQ)[h_1^{-1}]$ where $n
\geq 2$, $k\geq 0$ and $t = 2^{n+1}k+2^n-1$. Also, we abuse notation
and write $[\ell]P^k$ for a class in $\Pihat_{4k}(\bbQ)[\eta^{-1}]$
detected by the class of the same name in $\MASS(\bbQ)[h_1^{-1}]$.
  
\begin{theorem}
  \label{thm:Q}
  The $\eta$-inverted Milnor-Witt 0 stem of $\unit\twocomp$ over
  $\bbQ$ is
  \begin{equation*}
    \Pihat_0(\bbQ)[\eta^{-1}] \cong W(\bbQ)\twocomp[\eta^{\pm 1}].
  \end{equation*}
  The $t$th $\eta$-inverted Milnor-Witt stem of $\unit\twocomp$ over
  $\bbQ$ are as follows.
  \begin{equation*}
        \Pihat_t(\bbQ)[\eta^{-1}] \cong 
        \begin{cases}
          \Pihat_0(\bbQ)[\eta^{-1}]/2^{n+1} 
          & t \geq 0, t \equiv 3 \bmod 4, n = \nu_2(t+1) \\
          M & t \equiv 0 \bmod 4, t \geq 4 \\
          0 & \text{otherwise}
        \end{cases}
  \end{equation*}
  Here $\nu_2(x)$ is the $2$-adic valuation of an integer $x$ and $M$
  is the $\Pihat_0(\bbQ)[\eta^{-1}]$-module of definition
  \ref{def:M}.

  The remaining product structure of $\Pihat_*(\bbQ)[\eta^{-1}]$ is
  determined by the following relations: the product of any two
  generators with Milnor-Witt stem congruent to $3 \bmod 4$ is zero;
  $[q]P^j \cdot [\ell] P^k = [q] \cdot [\ell]P^{j+k}$ for all primes
  $\ell$ and $q$ and $k,j \geq 0$; $[q]\cdot[\ell]P^k = 0 $ if $q$ is
  a prime or $-1$ and $[q][\ell]=0$ in $k^M_*(\bbQ)$.
\end{theorem}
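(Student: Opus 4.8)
The plan is to assemble $\Pihat_*(\bbQ)[\eta^{-1}]$ from the $E_\infty$ page of the $h_1$-inverted motivic Adams spectral sequence over $\bbQ$ computed in Proposition~\ref{prop:generators}, using the strong convergence of $\MASS(\bbQ)[h_1^{-1}]$ to $\pi_{**}(\unit\twocomp)(\bbQ)[\eta^{-1}]$ in Milnor-Witt stems $t\geq 2$ supplied by Corollary~\ref{cor:convergence}. The Milnor-Witt $0$-stem is exactly Proposition~\ref{prop:0-stem}, since $W(\bbQ)$ has bounded $2$-torsion exponent. For $t<0$, Morel's connectivity theorem together with the exact sequence of \cite[Equation~(2)]{HKO-Convergence} relating $\pi_{s,w}(\unit\twocomp)$ to $\pi_{s,w}(\unit)$ and $\pi_{s-1,w}(\unit)$ forces $\pi_{s,w}(\unit\twocomp)=0$ whenever $s-w<0$, so $\Pihat_t(\bbQ)[\eta^{-1}]=0$. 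For $t\equiv 1,2\bmod 4$ I would appeal to the vanishing result \cite[Proof of Theorem~1.5]{Vanishing} of Ormsby, R{\"o}ndigs, and {\O}stv{\ae}r; this is in particular what handles Milnor-Witt stem $1$, where Corollary~\ref{cor:convergence} does not apply.

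For $t\equiv 3\bmod 4$ with $t\geq 3$ I would set $n=\nu_2(t+1)$, so that Proposition~\ref{prop:generators} exhibits a unique surviving generator in this Milnor-Witt stem, namely $\rho^{2^n-n-2}P^{2^{n-1}k}v_n$ with $t=2^{n+1}k+2^n-1$, of $\rho$-torsion order $n+1$. Since $h_1\rho$ detects multiplication by $2$ on $\pi_{**}(\unit\twocomp)$, the homotopy class $P^{2^{n-1}k}\lambda_n$ it detects generates $\Pihat_t(\bbQ)[\eta^{-1}]$ as a module over $\Pihat_0(\bbQ)[\eta^{-1}]=W(\bbQ)\twocomp[\eta^{\pm 1}]$, is killed by $2^{n+1}$, and is not killed by $2^n$; to upgrade this to the isomorphism $\Pihat_t(\bbQ)[\eta^{-1}]\cong\Pihat_0(\bbQ)[\eta^{-1}]/2^{n+1}$ I would compare with the completions $\bbQ_p$ and $\bbR$ through the motivic Hasse principle, exactly as in the determination of $\Ext(\bbQ)[h_1^{-1}]$: over each completion the corresponding Milnor-Witt stem is cyclic of the expected order, and injectivity of the comparison map pins down the annihilator over $\bbQ$. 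For $t=4k\geq 4$ the $E_\infty$ page in this stem is spanned, modulo $h_1$, by the classes $[\ell]P^k$ for $\ell$ prime, carrying the $\rho$-torsion orders and relations inherited from $k^M_*(\bbQ)$; as these all lie in a single Adams filtration up to $h_1$ there is no extension problem, and identifying $[\ell]P^k$ with the product of $P^k$ (detecting a power of $\mu$) and the rank one form $\langle\ell\rangle$ realizes $\Pihat_{4k}(\bbQ)[\eta^{-1}]$ as the submodule $M$ of Definition~\ref{def:M}, whose abelian-group structure is the one recorded there; the Hasse comparison again rules out hidden extensions.

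For the ring structure, the product of two generators in Milnor-Witt stems congruent to $3\bmod 4$ lands in a Milnor-Witt stem congruent to $2\bmod 4$, which vanishes by the above, so it is zero; the relation $[q]P^j\cdot[\ell]P^k=[q]\cdot[\ell]P^{j+k}$, together with its corollary $[q]\cdot[\ell]P^k=0$ whenever $[q][\ell]=0$ in $k^M_*(\bbQ)$, lifts the corresponding relation in $\Ext(\bbQ)[h_1^{-1}]$ from Corollary~\ref{cor:Q-ext}, and this lift is unambiguous because the relevant Milnor-Witt stem is concentrated in one Adams filtration up to $h_1$. The remaining products, those of a $\lambda$-type generator with an $[\ell]P^k$, land in cyclic $\Pihat_0(\bbQ)[\eta^{-1}]$-modules and are therefore determined, once more, by base change to the completions together with the module structures just established.

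The step I expect to be the main obstacle is precisely this passage from $E_\infty$ to the homotopy groups (and ring) in the stems $t\equiv 0,3\bmod 4$: one must rule out hidden extensions and, for $t\equiv 3\bmod 4$, pin down the annihilator of the generator as exactly $2^{n+1}\Pihat_0(\bbQ)[\eta^{-1}]$ rather than something larger. The tool is the Hasse-principle comparison with the completions, where the analogous computations are already in hand; the one genuinely delicate point is checking that the comparison map remains injective on the $E_\infty$ page of the motivic Adams spectral sequence, not merely on $\Ext$, which should follow from the explicit generator lists of Proposition~\ref{prop:generators} and Corollary~\ref{cor:Q-ext}.
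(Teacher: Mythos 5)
Your outline agrees with the paper for most of the theorem: the $0$-stem via Proposition \ref{prop:0-stem}, the vanishing for $t<0$ and $t\equiv 1,2\bmod 4$ via connectivity and \cite[Proof of Theorem 1.5]{Vanishing}, strong convergence from Corollary \ref{cor:convergence}, and the resolution of $2$-extensions by the fact that $\rho h_1$ detects multiplication by $2$. You also correctly isolate the main obstacle. But the way you propose to overcome it does not work. For $t\equiv 3\bmod 4$ with $n=\nu_2(t+1)\geq 3$, the generator $P^{2^{n-1}k}\lambda_n$ is detected by $\rho^{2^n-n-2}P^{2^{n-1}k}v_n$, and since $[\ell]\rho^2=0$ in $k^M_*(\bbQ)$ the product $[\ell]\cdot\rho^{2^n-n-2}P^{2^{n-1}k}v_n$ vanishes on the $E_\infty$ page: the products $[\ell]\cdot P^{2^{n-1}k}\lambda_n$ and $a_\ell\cdot P^{2^{n-1}k}\lambda_n$ are genuinely \emph{hidden} extensions. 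Injectivity of the Hasse map on $E_\infty$ pages (or on $\Ext$) says nothing about such products, precisely because they live in strictly higher Adams filtration than the product of the detecting classes; what you would need is injectivity of $\Pihat_t(\bbQ)[\eta^{-1}]\to\prod_\nu\Pihat_t(\bbQ_\nu)[\eta^{-1}]$ on the actual homotopy groups, which is not established anywhere and is essentially equivalent in difficulty to the computation itself. Moreover, even granting such injectivity, the comparison is not computable as you describe: $[\ell]$ maps to zero in $W(\bbR)$ for $\ell$ a positive prime, and over $\bbQ_p$ the detecting class $\rho^{2^n-n-2}P^{2^{n-1}k}v_n$ maps to zero (since $k^M_m(\bbQ_p)=0$ for $m\geq 3$), so the image of $P^{2^{n-1}k}\lambda_n$ over the completions is itself undetermined without further work.

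The paper closes this gap by a different mechanism: Lemma \ref{lem:massey_products} establishes that $[\ell]\cdot P^{2^{n-1}k}\lambda_n$ and $a_\ell\cdot P^{2^{n-1}k}\lambda_n$ are nonzero using the Toda bracket shuffle $\lambda_2\langle 2P^{2m}\lambda_2,2^2,[\ell]\rangle=\langle\lambda_2,2P^{2m}\lambda_2,2^2\rangle[\ell]$, with the relevant Massey products computed from the $\rho$-Bockstein differential $d_3(P^{2m+1})=\rho^3P^{2m}v_2$ and the Adams differential $d_r(\rho^{2^n-n-4}P^{2^{n-1}k}v_n)=\rho P^{2m}v_2^2$, and with the hypotheses of May's and Moss's convergence theorems verified by explicit degree analysis (Lemma \ref{lem:may_convergence} and the footnotes in Lemma \ref{lem:massey_products}). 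Some argument of this kind is unavoidable; without it your proof of the isomorphism $\Pihat_t(\bbQ)[\eta^{-1}]\cong\Pihat_0(\bbQ)[\eta^{-1}]/2^{n+1}$ for $t\equiv 3\bmod 4$, $n\geq 3$, and of the corresponding hidden products in the ring structure, is incomplete.
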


\begin{proof}
  The zero stem was calculated in proposition \ref{prop:0-stem} and
  \cite[Proof of Theorem 1.5]{Vanishing} shows the one stem vanishes.
  Proposition \ref{prop:generators} identifies the structure of the
  $E_{\infty}$ page of the $h_1$-inverted motivic Adams spectral
  sequence over $\bbQ$ and corollary \ref{cor:convergence} shows that
  $\MASS(\bbQ)[h_1^{-1}]$ strongly converges to
  $\Pihat_*(\bbQ)[\eta^{-1}]$ in Milnor-Witt stem at least two.  The
  $2$-extensions are resolved because $\rho h_1$ detects
  multiplication by $2$, from which the additive structure of the
  $\eta$-inverted stems follows.

  The product structure in the $E_{\infty}$ page of the $h_1$-inverted
  motivic Adams spectral sequence determines the
  $\Pihat_0(\bbQ)[\eta^{-1}]$-module structure of the stems
  $\Pihat_t(\bbQ)[\eta^{-1}]$ for $t \not\equiv 3 \bmod 4$ and $t \equiv
  3 \bmod 8$. It only remains to identify the hidden product of
  $[\ell]$ for $\ell$ a prime with a class $P^{2^{n-1}k}\lambda_n$ of
  $\Pihat_t(\bbQ)[\eta^{-1}]$ for $u \geq 3$, $k\geq 0$; note that $n =
  \nu_2(t+1)$. Lemma \ref{lem:massey_products} shows that the products
  $[\ell] \cdot P^{2^{n-1}k}\lambda_n$ and $a_{\ell}\cdot
  P^{2^{n-1}k}\lambda_n$ are always non-zero; hence the canonical map
  $\Pihat_0(\bbQ)[\eta^{-1}]/2^{n+1} \to \Pihat_n(\bbQ)[\eta^{-1}]$ is an
  isomorphism.

  The product of any two generators with Milnor-Witt stem congruent to
  $3 \bmod 4$ is zero for degree reasons. The remaining products are
  detected in the motivic Adams spectral sequence.
\end{proof}

\begin{lemma}
  \label{lem:massey_products}
  For $n \geq 3$ and $k\geq 0$, the products $[\ell] \cdot
  P^{2^{n-1}k}\lambda_n$ with $\ell$ a prime and $a_{\ell}\cdot
  P^{2^{n-1}k}\lambda_n$ with $\ell$ an odd prime in
  $\Pihat_*(\bbQ)[\eta^{-1}]$ are non-zero.
\end{lemma}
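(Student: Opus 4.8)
These products are hidden extensions in the $h_1$-inverted motivic Adams spectral sequence $\MASS(\bbQ)[h_1^{-1}]$. The class $P^{2^{n-1}k}\lambda_n$ is detected by $\rho^{2^n-n-2}P^{2^{n-1}k}v_n$, while $[\ell]$ and $a_\ell$ lie in Adams filtration $0$ and are annihilated by $\rho^2$ in $k^M_*(\bbQ)$ (using the relations of table \ref{table:Q-products} and the description of $k^M_*(\bbQ)$); since $2^n-n-2\geq 2$ exactly when $n\geq 3$, the $E_2$-page products $[\ell]\cdot\rho^{2^n-n-2}P^{2^{n-1}k}v_n$ and $a_\ell\cdot\rho^{2^n-n-2}P^{2^{n-1}k}v_n$ then vanish — whereas for $n=2$ they are nonzero and the conclusion is already visible on the $E_\infty$-page. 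So the plan is to detect $[\ell]\cdot P^{2^{n-1}k}\lambda_n$ and $a_\ell\cdot P^{2^{n-1}k}\lambda_n$ in strictly higher Adams filtration by realizing them as nonzero Massey products, using the Moss convergence theorem to pass between Massey products in $\Ext(\bbQ)[h_1^{-1}]$ and Toda brackets in $\Pihat_*(\bbQ)[\eta^{-1}]$.

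First I would write $P^{2^{n-1}k}\lambda_n$ as a member of a Toda bracket. The $\rho$-Bockstein differential $d_{2^n-1}(v_1^{2^n})=h_1^{2^n}\rho^{2^n-1}v_n$ together with the motivic Adams differentials of proposition \ref{prop:Q-Einfty} are exactly the data from which Guillou and Isaksen build, over $\bbR$, a Toda bracket with middle entry multiplication by $2^{n+1}$ containing the corresponding class in \cite[\S7]{GI-Real}. That construction applies over $\bbQ$ with no change, since these differentials also hold in $\BSS(\bbQ)[h_1^{-1}]$ and $\MASS(\bbQ)[h_1^{-1}]$ and the $\rho$-inverted Hopf algebroid over $\bbQ$ agrees with that over $\bbR$ (as used in corollary \ref{cor:convergence}); so there is a Toda bracket over $\bbQ$ containing $P^{2^{n-1}k}\lambda_n$.

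Next I would shuffle $[\ell]$, respectively $a_\ell$, into this bracket. By the standard identities $\alpha\langle\beta,\gamma,\delta\rangle\subseteq\langle\alpha\beta,\gamma,\delta\rangle$ and $\langle\beta,\gamma,\delta\rangle\alpha\subseteq\langle\beta,\gamma,\delta\alpha\rangle$ and the juggling relations, $[\ell]\cdot P^{2^{n-1}k}\lambda_n$ and $a_\ell\cdot P^{2^{n-1}k}\lambda_n$ each lie in a Toda bracket; by the Moss convergence theorem each is detected by a Massey product in $\Ext(\bbQ)[h_1^{-1}]$, and a further bracket shuffle shows that this Massey product is represented by $[\ell]$ (respectively $a_\ell$) times a permanent cycle of the $\rho$-Bockstein spectral sequence of Adams filtration higher than that of $\rho^{2^n-n-2}P^{2^{n-1}k}v_n$ — and this class is nonzero in $\Ext(\bbQ)[h_1^{-1}]$, which is where the hypotheses $n\geq 3$ and the Milnor $K$-theory relations enter. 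The case of $a_\ell$ for $\ell\equiv 3\bmod 4$ is closely tied to that of $[\ell]$, since $a_\ell=\rho[\ell]$ and $\rho h_1$ detects multiplication by $2$; the case of $a_\ell$ for $\ell\equiv 1\bmod 4$ reduces to the symbol cases after writing $a_\ell$ as a product of classes $[\ell']$ via table \ref{table:Q-products}.

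The hard part, as is typical for hidden extensions extracted by bracket juggling, will be controlling the indeterminacy of the relevant Toda brackets: one must verify that the indeterminacy subgroup does not already contain the value one wishes to exhibit, i.e., that no class of lower (or incomparable) Adams filtration in Milnor-Witt stem $t$ interferes. This is feasible because $\Pihat_*(\bbQ)[\eta^{-1}]$ vanishes in Milnor-Witt stems $\equiv 1,2\bmod 4$ and the $E_\infty$-page in Milnor-Witt stem $t\equiv 3\bmod 4$ is described explicitly by proposition \ref{prop:generators}, so the indeterminacy can be computed degree by degree.
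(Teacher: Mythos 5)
Your high-level strategy (realize the product inside a Toda bracket, pass between Toda brackets and Massey products via Moss's convergence theorem, and shuffle) is the same family of techniques the paper uses, but the step where you actually conclude non-vanishing does not work as written. You propose to detect $[\ell]\cdot P^{2^{n-1}k}\lambda_n$ by exhibiting its Massey-product representative as ``$[\ell]$ times a permanent cycle of higher Adams filtration.'' In Milnor--Witt stem $t=2^{n+1}k+2^n-1$ the $E_\infty$-page is generated over $k_*^M(\bbQ)$ by $\rho^{2^n-n-2}P^{2^{n-1}k}v_n$, and since $\rho^2[\ell]=\rho^2a_\ell=0$ while $2^n-n-2\geq 3$ for $n\geq 3$, the product of $[\ell]$ (or $a_\ell$) with \emph{every} class in this bidegree range is already zero on $E_\infty$ --- that is precisely why the extension is hidden, and it means the detecting class cannot be produced as a product with $[\ell]$ in any filtration. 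The paper's route around this is indirect: it shows $P^{2^{n-1}k}\lambda_n\in\langle\lambda_2,\,2P^{2m}\lambda_2,\,2^2\rangle$ (via the Adams differential $d_{n-1}(\rho^{2^n-n-4}P^{2^{n-1}k}v_n)=\rho P^{2m}v_2^2$, not via a bracket with middle entry $2^{n+1}$), shows $\langle 2P^{2m}\lambda_2,2^2,[\ell]\rangle$ does not contain zero (it is detected by $[\ell]P^{2m+1}$, coming from the Bockstein differential $d_3(P^{2m+1})=\rho^3P^{2m}v_2$ and May's convergence theorem, lemma \ref{lem:may_convergence}), and then applies the shuffle $\lambda_2\langle 2P^{2m}\lambda_2,2^2,[\ell]\rangle=\langle\lambda_2,2P^{2m}\lambda_2,2^2\rangle[\ell]$ together with the injectivity of $\lambda_2$-multiplication $\Pihat_{4j}(\bbQ)[\eta^{-1}]\to\Pihat_{4j+3}(\bbQ)[\eta^{-1}]$, which is visible on $E_\infty$. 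That injectivity is the essential substitute for the filtration argument you propose, and it is absent from your sketch.

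Two further gaps. First, reducing $a_\ell$ for $\ell\equiv 3\bmod 4$ to the case of $[\ell]$ via $a_\ell=\rho[\ell]$ only gives $a_\ell\cdot x=(\text{unit})\cdot 2\,[\ell]x$, so non-vanishing of $[\ell]x$ is not enough --- you would need its order to exceed $2$; the paper instead reruns the same shuffle with $a_\ell$ as the third entry of the bracket. Second, the hypotheses of Moss's and May's convergence theorems are genuinely delicate here: checking them occupies lemma \ref{lem:may_convergence} and the two footnotes in the paper's proof, and cannot be folded into ``the indeterminacy can be computed degree by degree.''
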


\begin{proof}
  For $m\geq 0$ and $\ell$ a prime, the Massey product $\langle \rho
  P^{2m}v_2, \rho^2, [\ell]\rangle$ in $\Ext(\bbQ)[h_1^{-1}]$ contains
  $[\ell]P^{2m+1}$ by lemma \ref{lem:may_convergence} and this has no
  indeterminacy. The hypotheses of Moss's convergence theorem
  \cite[Theorem 3.1.1]{StableStems} hold in this
  case;\footnote{Observe that for $r\geq 2$, $c>t$ and $t\equiv 3
    \bmod 4$ that the groups $E_2^{t,c}$ are trivial in the
    $h_1$-inverted motivic Adams spectral sequence over $\bbQ$.} hence
  $[\ell]P^{2m+1}$ detects a class of $\langle 2 P^{2m} \lambda_2,
  2^2, [\ell]\rangle$. The indeterminacy of this Toda bracket is
  $[\ell]\Pihat_{8m+4}$, which is in higher filtration than
  $[\ell]P^{2m+1}$. We conclude $\langle 2 P^{2m} \lambda_2, 2^2,
  [\ell]\rangle$ does not contain zero.

  The Massey product $\langle v_2, \rho P^{2m}v_2, \rho^2 \rangle$ can
  be shown to contain $\rho^{2^n-n-2}P^{2^{n-1}k}v_n$ using the Adams
  differential
  \begin{equation*}
    d_r(\rho^{2^n - n - 4}P^{2^{n-1}k}v_n) = \rho P^{2m}v_2^2
  \end{equation*}
  where $n = \nu_2(m+1) + 3$ and $r = n-1$. This Massey product has
  trivial indeterminacy. Moss's convergence theorem shows that
  $\rho^{2^n-n-2}P^{2^{n-1}k}v_n$ detects a class of $\langle
  \lambda_2, 2 P^{2m}\lambda_2, 2^2 \rangle$;\footnote{The condition
    to check for $\rho P^{2m}v_2^2$ in Moss's theorem is that
    $d_{r'}:E_{r'}^{8m+7, c'} \to E_{r'}^{8m+6, c''}$ must be zero
    when $c' \leq 8m + 1 - \nu_2(m+1)$, $c'' \geq 8m+4$ and $r' = c''
    - c'$. Nonzero differentials are only possible in these
    Milnor-Witt stems on classes $\rho^?P^?v_n$; it follows that
    $\nu_2(m+1)=n-3$, so $r' \geq n$. But by proposition
    \ref{prop:Q-Einfty}, nonzero differentials on such classes occur
    only when $n \geq r' + 1$. The condition to check for the
    element $\rho^3P^{2m}v_2$ is that $d_{r'}:E_{r'}^{8m+4, c'} \to
    E_{r'}^{8m+3, c''}$ must be zero when $c' \leq 8m+2-\nu_2(m+1)$,
    $c'' \geq 8m + 5$ and $r' = c'' - c'$. The classes in Milnor-Witt
    stem $4 \bmod 8$ are generated by the classes of the form $\rho$,
    $h_1^{\pm 1}$ and $[\ell]P^k$. The Adams differentials vanish on
    these classes, so the hypotheses of Moss's theorem are true.}
  hence $P^{2^{n-1}k}\lambda_n$ is in the Toda bracket $\langle
  \lambda_2, 2 P^{2m}\lambda_2, 2^2 \rangle$.

  We now use the shuffle relation
  \begin{equation*}
    \lambda_2 \langle 2 P^{2m}\lambda_2, 2^2, [\ell]\rangle = 
    \langle \lambda_2, 2 P^{2m}\lambda_2, 2^2 \rangle [\ell].
  \end{equation*}
  Multiplication by $\lambda_2$ is an injection on the stems
  $\Pihat_{4j}(\bbQ)[\eta^{-1}] \to \Pihat_{4j+3}(\bbQ)[\eta^{-1}]$ by the
  product structure in the motivic Adams spectral sequence, hence the
  left hand side of the shuffle relation does not contain zero. As
  $[\ell]\cdot P^{2^{n-1}k}\lambda_n$ is in the right hand side of the
  shuffle relation, we conclude that $[\ell]\cdot
  P^{2^{n-1}k}\lambda_n$ is nonzero.

  A similar argument using the shuffle relation
  \begin{equation*}
    \lambda_2 \langle 2 P^{2k}v_2, 2^2, a_{\ell}\rangle = \langle \lambda_2, 2 P^{2k}v_2, 2^2\rangle a_{\ell}
  \end{equation*}
  establishes the claim that $a_{\ell}\cdot P^{2^{u-1}k}$ is non-zero.
\end{proof}

\begin{lemma}
  \label{lem:may_convergence}
  Let $m\geq 0$ and $\ell$ a prime, the Massey product $\langle \rho
  P^{2m}v_2, \rho^2, [\ell]\rangle$ in $\Ext(\bbQ)[h_1^{-1}]$ contains
  $[\ell]P^{2m+1}$ and has trivial indeterminacy.
\end{lemma}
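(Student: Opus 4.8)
The statement to prove is that in $\Ext(\bbQ)[h_1^{-1}]$ the Massey product $\langle \rho P^{2m}v_2, \rho^2, [\ell]\rangle$ contains $[\ell]P^{2m+1}$ and has trivial indeterminacy. The natural strategy is to work in the $\rho$-Bockstein spectral sequence computing $\Ext(\bbQ)[h_1^{-1}]$, using the technique of computing Massey products from $d_r$ differentials à la the May convergence theorem (cf. Isaksen--Guillou \cite[\S7]{GI-Real} where the analogous brackets over $\bbR$ are handled). First I would record the relevant differential: from the proposition on $\rho$-Bockstein differentials over $\bbQ$, $d_{2^n-1}(v_1^{2^n}) = h_1^{2^n}\rho^{2^n-1}v_n$, and specializing to $n=2$ gives $d_3(v_1^4) = d_3(P) = h_1^4 \rho^3 v_2$, i.e. $\rho^3 v_2$ (up to the $h_1^4$ unit, which we suppress after inverting $h_1$) is hit by $d_3(P)$. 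Multiplying by $P^{2m}$, the class $\rho^3 P^{2m}v_2$ supports no differential but is a $d_3$-boundary: $d_3(P^{2m+1}) = \rho^3 P^{2m}v_2$, i.e. after dividing by $\rho^2$, the relation $\rho(\rho^2\cdot) $ structure is what produces the bracket.

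**Key steps in order.** (1) Establish that $[\ell]$ is a permanent cycle in the $\rho$-Bockstein spectral sequence in filtration $\epsilon=0$ (or $\epsilon=1$ for $\ell\equiv 3\bmod 4$, where $[\ell]=a_\ell$ carries $\rho$-filtration $1$), and that $\rho^2[\ell]=0$ in $\Ext(\bbQ)[h_1^{-1}]$ since $k^M_*(\bbQ)$ has $\rho$-torsion controlled by the table—$[\ell]$ has $\rho$-torsion $1$ when $\ell\equiv 1\bmod 4$ and $\rho$-torsion $2$ when $\ell\equiv 3\bmod 4$, so in either case $\rho^2[\ell]=0$; likewise $\rho^2\cdot\rho P^{2m}v_2 = \rho^3 P^{2m}v_2$, which we want to be zero in $\Ext(\bbQ)[h_1^{-1}]$ — and indeed $P^{2m}v_2$ has $\rho$-torsion $3$, so $\rho^3 P^{2m}v_2 = 0$. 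Thus the bracket $\langle \rho P^{2m}v_2, \rho^2, [\ell]\rangle$ is defined. (2) Apply the May convergence theorem for the $\rho$-Bockstein spectral sequence: since $\rho^3 P^{2m}v_2$ is killed precisely by $d_3(P^{2m+1})$ — obtained by applying the Leibniz rule to $d_3(P) = \rho^3 v_2$ and using that $v_2^{2m}\cdots$, more precisely from $d_3(P^{2m+1}) = (2m+1)\rho^3 P^{2m}v_2 = \rho^3 P^{2m}v_2$ mod $2$ — the Massey product $\langle \rho P^{2m}v_2, \rho^2, [\ell]\rangle$ is detected in the associated graded by the product $P^{2m+1}\cdot[\ell]$, formed from the null-homotopies $\rho^2\cdot(\rho P^{2m}v_2) = d_3(\rho^{-1}P^{2m+1}\cdot\rho) $… (being careful about dividing by $\rho^2$: the standard formula says the bracket is represented by $P^{2m+1}[\ell]$ since the $d_3$-differential $d_3(P^{2m+1}) = \rho^2\cdot(\rho P^{2m}v_2)$ witnesses the first null-homotopy and $\rho^2[\ell]=0$ witnesses the second). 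Hence $[\ell]P^{2m+1}$ lies in the bracket. (3) Compute the indeterminacy $\rho P^{2m}v_2\cdot \Ext(\bbQ)[h_1^{-1}]_{(\text{deg of }[\ell]P^{2m+1}/\rho P^{2m}v_2)} + \Ext(\bbQ)[h_1^{-1}]_{(\cdots)}\cdot[\ell]$; using the explicit generating set of Corollary \ref{cor:Q-ext} one checks both summands vanish in the relevant tri-degree $(f,t,c)$ — this is a finite degree bookkeeping using $t\equiv 4\bmod 8$ for $P^{2m+1}$, $\deg[\ell]=(0,0,1)+(2m+1)\cdot 0$ shifted appropriately — so the bracket has trivial indeterminacy.

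**Main obstacle.** The delicate point is the May convergence argument and the division by $\rho^2$: one must verify the hypotheses of the May convergence theorem for the $\rho$-Bockstein spectral sequence (crossing differentials, that no shorter differential hits the relevant classes, and that the factorization $d_3(P^{2m+1}) = \rho^2\cdot(\rho P^{2m}v_2)$ together with $\rho^2\cdot[\ell]=0$ genuinely assembles to the asserted Massey product representative rather than something in higher $\rho$-filtration). In particular one must rule out that the differential is "absorbed" by a longer differential on a $\rho$-multiple, i.e. confirm from Proposition on the $\rho$-Bockstein differentials that $d_3(P^{2m+1})$ is the first nonzero differential on $P^{2m+1}$ in the relevant range and that $\rho P^{2m}v_2$ is a permanent cycle (true: $v_2$ and $P$ are permanent cycles and $\rho^3 P^{2m}v_2$ is killed at $E_3$). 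The indeterminacy computation, while routine, also requires care because $\Ext(\bbQ)[h_1^{-1}]$ is larger than $\Ext(\bbR)[h_1^{-1}]$ — one must use the full generator list including the classes $[q]P^k$ and $a_q$ — but in the precise degree of $[\ell]P^{2m+1}$ these do not contribute, which follows from the congruence conditions on $(t,c)$ recorded in Corollary \ref{cor:Q-ext}.
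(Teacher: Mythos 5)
Your overall strategy is the same as the paper's: read the bracket off from the $\rho$-Bockstein differential $d_3(P^{2m+1})=\rho^3P^{2m}v_2=\rho^2\cdot(\rho P^{2m}v_2)$, check the bracket is defined (using the $\rho$-torsion of $[\ell]$ and of $P^{2m}v_2$ from the generator tables), and invoke May's convergence theorem to conclude that $[\ell]P^{2m+1}$ is a representative. That framing is correct.

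The gap is that you name the verification of the hypotheses of May's convergence theorem as the ``main obstacle'' but never actually perform it, and that verification is essentially the entire content of the proof. The crossing-differentials condition is not about whether $d_3(P^{2m+1})$ is the first differential on $P^{2m+1}$ (which is how you phrase it); it requires showing that no differential $d_R$ with $R>\epsilon'$ interferes in the graded pieces of degree $\epsilon'\geq 3$, $t=8m+3$, $c=8m+4$ attached to $\rho^3P^{2m}v_2$ (and likewise in the degree of $\rho^2[\ell]$, where the check is easy). Concretely, one must enumerate the classes on the $E_4$-page landing in tridegrees with $t+c\equiv 7\bmod 8$: using the generator list, the only candidates are monomials $\rho^{\epsilon}v_3^{a_3}\cdots v_i^{a_i}$ (since $([\ell]P)^2=0$, $\rho^3v_2=0$, and $\rho^2[\ell]Pv_2=0$ kill the other products), and then one needs the arithmetic estimate: with $A=\sum a_i$ and $j=\min\{x\mid a_x\neq 0\}$, the constraint $8m+3=\sum a_i(2^i-1)\geq A(2^j-1)$ forces $\epsilon\geq A(2^j-2)+1$ with $A\geq 2$, so any $R>\epsilon$ satisfies $R>2^j-1$ and the class already dies on $E_R$ because of the relation $\rho^{2^j-1}v_j=0$. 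Without this enumeration and estimate your step (2) is an assertion, not an argument; the same goes for the trivial-indeterminacy claim, which you reduce to ``finite degree bookkeeping'' without exhibiting the bookkeeping. To complete the proof you would need to supply this degree analysis explicitly.
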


\begin{proof}
  The $\rho$-Bockstein spectral sequence differential $d_3(P^{2m+1}) =
  \rho^3 P^{2m}v_2$ shows that $\langle \rho P^{2m}v_2, \rho^2,
  [\ell]\rangle$ contains $[\ell]P^{2m+1}$; this Massey product has
  trivial indeterminacy. To verify the hypotheses of May's convergence
  theorem \cite[Theorem 2.2.1]{StableStems}, first note the degree of
  $\rho^2[\ell]$ is $\epsilon=2$, $t=0$, and $c=1$. All
  $\rho$-Bockstein spectral sequence differentials vanish in this
  graded component. It remains to check $d_R$ differentials on the
  graded piece with $\epsilon' \geq 3$, $t = 8m +3$, $c=8m+4$ and $R >
  \epsilon'$ corresponding to $\rho^3 P^{2m}v_2$.

  We now look for elements of the $E_4$-page of the $\rho$-Bockstein
  spectral sequence which land in degrees $(t,c)$ for which $t+c
  \equiv 7 \bmod 8$. Given the description of the generators of the
  $E_4$-page, it suffices to consider products of just $v_n$ for
  $n\geq 3$, $[\ell]P$, $v_2$, and $\rho$. The sum $t+c \bmod 8$ for
  each of these generators is $0$, $1$, $4$, and $1$ respectively. As
  $([\ell]P)^2=0$, $\rho^3v_2 = 0$, and $\rho^2[\ell]Pv_2 = 0$ in the
  $E_4$-page, the only non-zero product of these generators in degree
  $(t,c)$ with $t+c \equiv 7 \bmod 8$ must be of the form
  $\rho^{\epsilon}v_3^{a_3}\cdots v_i^{a_i}$.

  Suppose now that $\rho^{\epsilon}v_3^{a_3}\cdots v_i^{a_i}$ is in
  degree $t=8m+3$, $c=8m+4$ for some $m$. Let $A = \sum a_i$ and $j =
  \min\{x \st a_x \neq 0 \}$. Under these assumptions it follows that
  \begin{equation*}
    8m + 3 = \sum a_i(2^i-1) \geq A(2^j -1).
  \end{equation*}
  Hence $\epsilon \geq A(2^j-2) + 1$. Note that $A$ must be at least 2
  in order for the Milnor-Witt stem $t$ to be congruent to 3 modulo
  8. It follows that if $R > \epsilon$, then $R > 2^j -1$ and so the
  class $\rho^{\epsilon}v_3^{a_3}\cdots v_i^{a_i}$ is zero in the
  $E_R$ page due to the relation $\rho^{2^j-1}v_j = 0$, which arises
  from a $d_{2^j-1}$ differential. We conclude May's convergence
  theorem applies in this situation.  It is straightforward to check
  that the indeterminacy is trivial.
\end{proof}

\bibliographystyle{alpha}
\bibliography{bibliography}

\begin{figure}
  \centering
\begin{tikzpicture}[x=.6cm, y=.6cm, scale=.8, transform shape]
\foreach \n in {0,...,24}{\draw[line width=.25pt, gray!40] (-0.5, \n-.5) -- (24, \n-.5);}
\foreach \n in {0,...,24}{\draw[line width=.25pt, gray!40] (\n-.5,-.5) -- (\n-.5,24);}
\foreach \n in {0,..., 24}{\draw (\n, -.6) node [anchor=north] {\n};}
\draw (-0.5, .2) node [anchor=east] {0};
\foreach \n in {1,...,24}{\draw (-0.5, \n) node [anchor=east] {\n};}


\draw [line width=.6pt] (0, 0)-- (0, 2);
\draw [line width=.6pt] (3, 1)-- (3, 3);
\draw [line width=.6pt] (6, 2)-- (6, 4);
\draw [line width=.6pt] (7, 1)-- (7, 2);
\draw [line width=.6pt] (7, 2)-- (7, 3);
\draw [line width=.6pt] (15, 1)-- (15, 2);
\draw [line width=.6pt] (15, 2)-- (15, 3);
\draw [line width=.6pt] (14, 2)-- (14, 3);
\draw [line width=.6pt] (14, 3)-- (14, 4);
\draw [line width=.6pt] (22, 2)-- (22, 3);
\draw [line width=.6pt] (22, 3)-- (22, 4);
\draw [line width=.6pt] (21, 3)-- (21, 4);
\draw [line width=.6pt] (21, 4)-- (21, 5);
\draw [line width=.6pt] (4, 4)-- (4, 5);
\draw [line width=.6pt] (4, 5)-- (4, 6);
\draw [line width=.6pt] (8, 8)-- (8, 9);
\draw [line width=.6pt] (8, 9)-- (8, 10);
\draw [line width=.6pt] (12, 12)-- (12, 13);
\draw [line width=.6pt] (12, 13)-- (12, 14);
\draw [line width=.6pt] (16, 16)-- (16, 17);
\draw [line width=.6pt] (16, 17)-- (16, 18);
\draw [line width=.6pt] (20, 20)-- (20, 21);
\draw [line width=.6pt] (20, 21)-- (20, 22);
\draw [line width=.6pt] (11, 5)-- (11, 6);
\draw [line width=.6pt] (11, 6)-- (11, 7);
\draw [line width=.6pt] (19, 5)-- (19, 6);
\draw [line width=.6pt] (19, 6)-- (19, 7);
\draw [line width=.6pt] (18, 6)-- (18, 7);
\draw [line width=.6pt] (18, 7)-- (18, 8);
\draw [line width=.6pt] (15, 9)-- (15, 10);
\draw [line width=.6pt] (15, 10)-- (15, 11);
\draw [line width=.6pt] (23, 9)-- (23, 10);
\draw [line width=.6pt] (23, 10)-- (23, 11);
\draw [line width=.6pt] (22, 10)-- (22, 11);
\draw [line width=.6pt] (22, 11)-- (22, 12);
\draw [line width=.6pt] (19, 13)-- (19, 14);
\draw [line width=.6pt] (19, 14)-- (19, 15);
\draw [line width=.6pt] (23, 17)-- (23, 18);
\draw [line width=.6pt] (23, 18)-- (23, 19);

\draw [line width=.6pt] (0, 0)-- (6, 2);
\draw [line width=.6pt] (0, 1)-- (6, 3);
\draw [line width=.6pt] (0, 2)-- (6, 4);
\draw [line width=.6pt, ->, >=stealth] (6, 2)-- (7, 2.33);
\draw [line width=.6pt, ->, >=stealth] (6, 3)-- (7, 3.33);
\draw [line width=.6pt, ->, >=stealth] (6, 4)-- (7, 4.33);
\draw [line width=.6pt, ->, >=stealth] (7, 1)-- (8, 1.33);
\draw [line width=.6pt, ->, >=stealth] (7, 2)-- (8, 2.33);
\draw [line width=.6pt, ->, >=stealth] (7, 3)-- (8, 3.33);
\draw [line width=.6pt, ->, >=stealth] (15, 1)-- (16, 1.33);
\draw [line width=.6pt, ->, >=stealth] (15, 2)-- (16, 2.33);
\draw [line width=.6pt, ->, >=stealth] (15, 3)-- (16, 3.33);
\draw [line width=.6pt, ->, >=stealth] (14, 2)-- (15, 2.33);
\draw [line width=.6pt, ->, >=stealth] (14, 3)-- (15, 3.33);
\draw [line width=.6pt, ->, >=stealth] (14, 4)-- (15, 4.33);
\draw [line width=.6pt, ->, >=stealth] (22, 2)-- (23, 2.33);
\draw [line width=.6pt, ->, >=stealth] (22, 3)-- (23, 3.33);
\draw [line width=.6pt, ->, >=stealth] (22, 4)-- (23, 4.33);
\draw [line width=.6pt, ->, >=stealth] (21, 3)-- (22, 3.33);
\draw [line width=.6pt, ->, >=stealth] (21, 4)-- (22, 4.33);
\draw [line width=.6pt, ->, >=stealth] (21, 5)-- (22, 5.33);
\draw [line width=.6pt, ->, >=stealth] (4, 4)-- (5, 4.33);
\draw [line width=.6pt, ->, >=stealth] (4, 5)-- (5, 5.33);
\draw [line width=.6pt, ->, >=stealth] (4, 6)-- (5, 6.33);
\draw [line width=.6pt, ->, >=stealth] (8, 8)-- (9, 8.33);
\draw [line width=.6pt, ->, >=stealth] (8, 9)-- (9, 9.33);
\draw [line width=.6pt, ->, >=stealth] (8, 10)-- (9, 10.33);
\draw [line width=.6pt, ->, >=stealth] (12, 12)-- (13, 12.33);
\draw [line width=.6pt, ->, >=stealth] (12, 13)-- (13, 13.33);
\draw [line width=.6pt, ->, >=stealth] (12, 14)-- (13, 14.33);
\draw [line width=.6pt, ->, >=stealth] (16, 16)-- (17, 16.33);
\draw [line width=.6pt, ->, >=stealth] (16, 17)-- (17, 17.33);
\draw [line width=.6pt, ->, >=stealth] (16, 18)-- (17, 18.33);
\draw [line width=.6pt, ->, >=stealth] (20, 20)-- (21, 20.33);
\draw [line width=.6pt, ->, >=stealth] (20, 21)-- (21, 21.33);
\draw [line width=.6pt, ->, >=stealth] (20, 22)-- (21, 22.33);
\draw [line width=.6pt, ->, >=stealth] (24, 24)-- (25, 24.33);
\draw [line width=.6pt, ->, >=stealth] (11, 5)-- (12, 5.33);
\draw [line width=.6pt, ->, >=stealth] (11, 6)-- (12, 6.33);
\draw [line width=.6pt, ->, >=stealth] (11, 7)-- (12, 7.33);
\draw [line width=.6pt, ->, >=stealth] (19, 5)-- (20, 5.33);
\draw [line width=.6pt, ->, >=stealth] (19, 6)-- (20, 6.33);
\draw [line width=.6pt, ->, >=stealth] (19, 7)-- (20, 7.33);
\draw [line width=.6pt, ->, >=stealth] (18, 6)-- (19, 6.33);
\draw [line width=.6pt, ->, >=stealth] (18, 7)-- (19, 7.33);
\draw [line width=.6pt, ->, >=stealth] (18, 8)-- (19, 8.33);
\draw [line width=.6pt, ->, >=stealth] (15, 9)-- (16, 9.33);
\draw [line width=.6pt, ->, >=stealth] (15, 10)-- (16, 10.33);
\draw [line width=.6pt, ->, >=stealth] (15, 11)-- (16, 11.33);
\draw [line width=.6pt, ->, >=stealth] (23, 9)-- (24, 9.33);
\draw [line width=.6pt, ->, >=stealth] (23, 10)-- (24, 10.33);
\draw [line width=.6pt, ->, >=stealth] (23, 11)-- (24, 11.33);
\draw [line width=.6pt, ->, >=stealth] (22, 10)-- (23, 10.33);
\draw [line width=.6pt, ->, >=stealth] (22, 11)-- (23, 11.33);
\draw [line width=.6pt, ->, >=stealth] (22, 12)-- (23, 12.33);
\draw [line width=.6pt, ->, >=stealth] (19, 13)-- (20, 13.33);
\draw [line width=.6pt, ->, >=stealth] (19, 14)-- (20, 14.33);
\draw [line width=.6pt, ->, >=stealth] (19, 15)-- (20, 15.33);
\draw [line width=.6pt, ->, >=stealth] (23, 17)-- (24, 17.33);
\draw [line width=.6pt, ->, >=stealth] (23, 18)-- (24, 18.33);
\draw [line width=.6pt, ->, >=stealth] (23, 19)-- (24, 19.33);
\filldraw [black, fill=black] (0, 0) circle (2pt);
\filldraw [black, fill=black] (3, 1) circle (2pt);
\filldraw [black, fill=black] (6, 2) circle (2pt);
\filldraw [black, fill=black] (7, 1) circle (2pt);
\filldraw [black, fill=black] (15, 1) circle (2pt);
\filldraw [black, fill=black] (14, 2) circle (2pt);
\filldraw [black, fill=black] (22, 2) circle (2pt);
\filldraw [black, fill=black] (21, 3) circle (2pt);
\filldraw [black, fill=black] (4, 4) circle (2pt);
\filldraw [black, fill=black] (8, 8) circle (2pt);
\filldraw [black, fill=black] (12, 12) circle (2pt);
\filldraw [black, fill=black] (16, 16) circle (2pt);
\filldraw [black, fill=black] (20, 20) circle (2pt);
\filldraw [black, fill=black] (24, 24) circle (2pt);
\filldraw [black, fill=black] (11, 5) circle (2pt);
\filldraw [black, fill=black] (19, 5) circle (2pt);
\filldraw [black, fill=black] (18, 6) circle (2pt);
\filldraw [black, fill=black] (15, 9) circle (2pt);
\filldraw [black, fill=black] (23, 9) circle (2pt);
\filldraw [black, fill=black] (22, 10) circle (2pt);
\filldraw [black, fill=black] (19, 13) circle (2pt);
\filldraw [black, fill=black] (23, 17) circle (2pt);
\filldraw [blue, fill=blue] (0, 1) circle (2pt);
\filldraw [blue, fill=blue] (3, 2) circle (2pt);
\filldraw [blue, fill=blue] (6, 3) circle (2pt);
\filldraw [blue, fill=blue] (7, 2) circle (2pt);
\filldraw [blue, fill=blue] (15, 2) circle (2pt);
\filldraw [blue, fill=blue] (14, 3) circle (2pt);
\filldraw [blue, fill=blue] (22, 3) circle (2pt);
\filldraw [blue, fill=blue] (21, 4) circle (2pt);
\filldraw [blue, fill=blue] (4, 5) circle (2pt);
\filldraw [blue, fill=blue] (8, 9) circle (2pt);
\filldraw [blue, fill=blue] (12, 13) circle (2pt);
\filldraw [blue, fill=blue] (16, 17) circle (2pt);
\filldraw [blue, fill=blue] (20, 21) circle (2pt);
\filldraw [blue, fill=blue] (11, 6) circle (2pt);
\filldraw [blue, fill=blue] (19, 6) circle (2pt);
\filldraw [blue, fill=blue] (18, 7) circle (2pt);
\filldraw [blue, fill=blue] (15, 10) circle (2pt);
\filldraw [blue, fill=blue] (23, 10) circle (2pt);
\filldraw [blue, fill=blue] (22, 11) circle (2pt);
\filldraw [blue, fill=blue] (19, 14) circle (2pt);
\filldraw [blue, fill=blue] (23, 18) circle (2pt);
\filldraw [red, fill=red] (0, 2) circle (2pt);
\filldraw [red, fill=red] (3, 3) circle (2pt);
\filldraw [red, fill=red] (6, 4) circle (2pt);
\filldraw [red, fill=red] (7, 3) circle (2pt);
\filldraw [red, fill=red] (15, 3) circle (2pt);
\filldraw [red, fill=red] (14, 4) circle (2pt);
\filldraw [red, fill=red] (22, 4) circle (2pt);
\filldraw [red, fill=red] (21, 5) circle (2pt);
\filldraw [red, fill=red] (4, 6) circle (2pt);
\filldraw [red, fill=red] (8, 10) circle (2pt);
\filldraw [red, fill=red] (12, 14) circle (2pt);
\filldraw [red, fill=red] (16, 18) circle (2pt);
\filldraw [red, fill=red] (20, 22) circle (2pt);
\filldraw [red, fill=red] (11, 7) circle (2pt);
\filldraw [red, fill=red] (19, 7) circle (2pt);
\filldraw [red, fill=red] (18, 8) circle (2pt);
\filldraw [red, fill=red] (15, 11) circle (2pt);
\filldraw [red, fill=red] (23, 11) circle (2pt);
\filldraw [red, fill=red] (22, 12) circle (2pt);
\filldraw [red, fill=red] (19, 15) circle (2pt);
\filldraw [red, fill=red] (23, 19) circle (2pt);

\draw (0, 0) node [anchor=east] {$1$};
\draw (3, 1) node [anchor=east] {$v_2$};
\draw (7, 1) node [anchor=east] {$v_3$};
\draw (15, 1) node [anchor=east] {$v_4$};
\draw (14, 2) node [anchor=east] {$v_3^2$};
\draw (22, 2) node [anchor=east] {$v_3v_4$};
\draw (21, 3) node [anchor=east] {$v_3^3$};
\draw (4, 4) node [anchor=east] {$P$};
\draw (8, 8) node [anchor=east] {$P^2$};
\draw (12, 12) node [anchor=east] {$P^3$};
\draw (16, 16) node [anchor=east] {$P^4$};
\draw (20, 20) node [anchor=east] {$P^5$};
\draw (24, 24) node [anchor=east] {$P^6$};
\draw (11, 5) node [anchor=east] {$Pv_3$};
\draw (19, 5) node [anchor=east] {$Pv_4$};
\draw (18, 6) node [anchor=east] {$Pv_3^2$};
\draw (15, 9) node [anchor=east] {$P^2v_3$};
\draw (23, 9) node [anchor=east] {$P^2v_4$};
\draw (22, 10) node [anchor=east] {$P^2v_3^2$};
\draw (19, 13) node [anchor=east] {$P^3v_3$};
\draw (23, 17) node [anchor=east] {$P^4v_3$};

\end{tikzpicture}
\caption{ The $E_1$ page of the $h_1$-inverted $\rho$-Bockstein
  spectral sequence over a field $F$ with $\mathrm{cd}_2(F)=2$ up to
  Milnor-Witt stem $24$. Solid vertical lines indicate possible
  $\rho$-multiplications that depend on the field. Black dots
  represent the group $\bbZ/2[h_1^{\pm 1}]$, blue dots represent
  $k^M_1(F)[h_1^{\pm 1}]$, and red dots represent $k^M_2(F)[h_1^{\pm
    1}]$. Solid lines of slope $1/3$ indicate multiplication by $v_2$
  and arrows in this direction represent a tower of nonzero $v_2$
  multiples. The horizontal axis $t$ is the Milnor-Witt stem and the
  vertical axis $c$ is the Chow weight, while the Adams filtration is
  suppressed.}
\label{fig:cd2BSS-E1}

\end{figure}
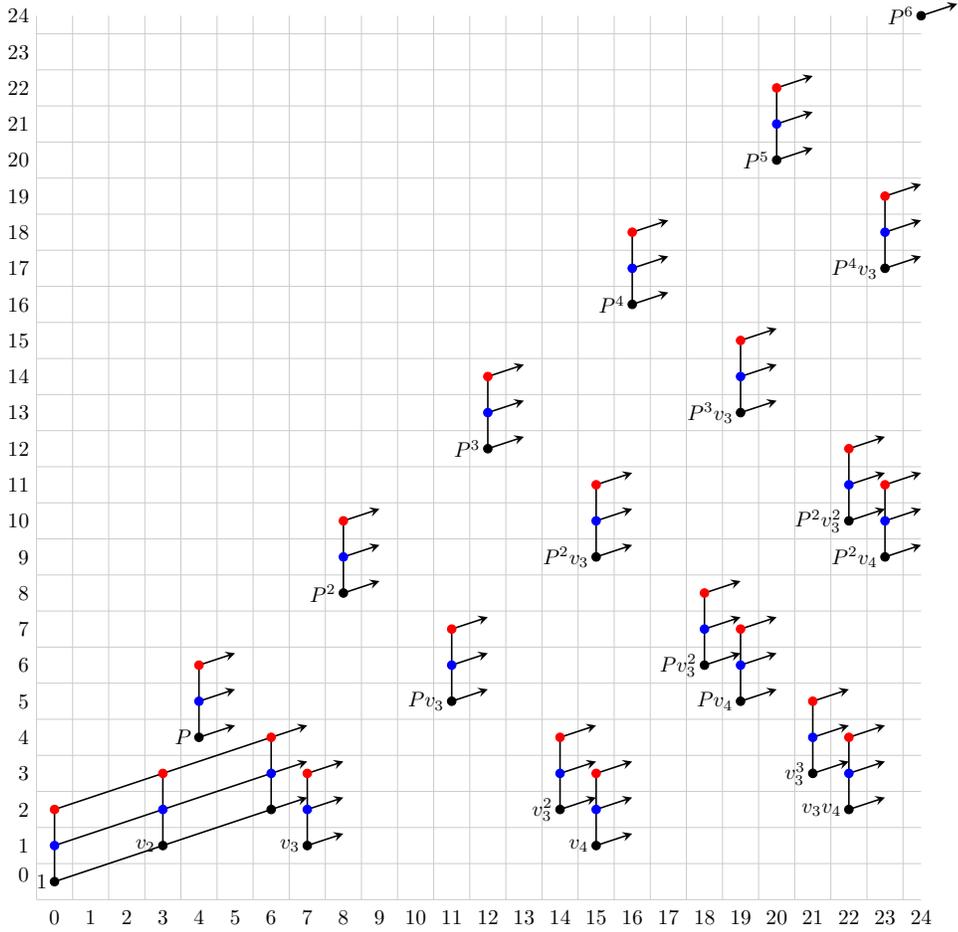

\begin{figure}
  \centering
\begin{tikzpicture}[x=.6cm, y=.6cm, scale=.8, transform shape]
\foreach \n in {0,...,24}{\draw[line width=.25pt, gray!40] (-0.5, \n-.5) -- (24, \n-.5);}
\foreach \n in {0,...,24}{\draw[line width=.25pt, gray!40] (\n-.5,-.5) -- (\n-.5,24);}
\foreach \n in {0,..., 24}{\draw (\n, -.6) node [anchor=north] {\n};}
\draw (-0.5, .2) node [anchor=east] {0};
\foreach \n in {1,...,24}{\draw (-0.5, \n) node [anchor=east] {\n};}


\draw [line width=.6pt] (0, 0)-- (0, 1);
\draw [line width=.6pt] (0, 1)-- (0, 2);

\draw [line width=.6pt] (3, 1)-- (3, 2);
\draw [line width=.6pt] (3, 2)-- (3, 3);

\draw [line width=.6pt] (4, 4)-- (4, 5);
\draw [line width=.6pt] (4, 5)-- (4, 6);

\draw [line width=.6pt] (7, 5)-- (7, 6);
\draw [line width=.6pt] (7, 6)-- (7, 7);

\draw [line width=.6pt] (8, 8)-- (8, 9);
\draw [line width=.6pt] (8, 9)-- (8, 10);

\draw [line width=.6pt] (11, 9)-- (11, 10);
\draw [line width=.6pt] (11, 10)-- (11, 11);

\draw [line width=.6pt] (12, 12)-- (12, 13);
\draw [line width=.6pt] (12, 13)-- (12, 14);

\draw [line width=.6pt] (15, 13)-- (15, 14);
\draw [line width=.6pt] (15, 14)-- (15, 15);

\draw [line width=.6pt] (16, 16)-- (16, 17);
\draw [line width=.6pt] (16, 17)-- (16, 18);

\draw [line width=.6pt] (19, 17)-- (19, 18);
\draw [line width=.6pt] (19, 18)-- (19, 19);

\draw [line width=.6pt] (20, 20)-- (20, 21);
\draw [line width=.6pt] (20, 21)-- (20, 22);

\draw [line width=.6pt] (23, 21)-- (23, 22);
\draw [line width=.6pt] (23, 22)-- (23, 23);

\draw [line width=.6pt] (20, 20)-- (20, 21);
\draw [line width=.6pt] (20, 21)-- (20, 22);

\draw [line width=.6pt] (23, 21)-- (23, 22);
\draw [line width=.6pt] (23, 22)-- (23, 23);

\foreach \n in {0,1,2}{\draw [line width=.2pt] (0, 0+\n)-- (3, 1+\n);}
\foreach \n in {0,1,2}{\draw [line width=.2pt] (4, 4+\n)-- (3+4, 4+1+\n);}
\foreach \n in {0,1,2}{\draw [line width=.2pt] (8, 8+\n)-- (8+3, 8+1+\n);}
\foreach \n in {0,1,2}{\draw [line width=.2pt] (12, 12+\n)-- (12+3, 12+1+\n);}
\foreach \n in {0,1,2}{\draw [line width=.2pt] (16, 16+\n)-- (16+3, 16+1+\n);}
\foreach \n in {0,1,2}{\draw [line width=.2pt] (20, 20+\n)-- (20+3, 20+1+\n);}

\filldraw [black, fill=black] (0, 0) circle (2pt);
\filldraw [black, fill=black] (3, 1) circle (2pt);
\filldraw [black, fill=black] (4, 4) circle (2pt);
\filldraw [black, fill=black] (8, 8) circle (2pt);
\filldraw [black, fill=black] (12, 12) circle (2pt);
\filldraw [black, fill=black] (16, 16) circle (2pt);
\filldraw [black, fill=black] (20, 20) circle (2pt);
\filldraw [black, fill=black] (24, 24) circle (2pt);
\filldraw [blue, fill=blue] (0, 1) circle (2pt);
\filldraw [blue, fill=blue] (3, 2) circle (2pt);
\filldraw [blue, fill=blue] (4, 5) circle (2pt);
\filldraw [blue, fill=blue] (8, 9) circle (2pt);
\filldraw [blue, fill=blue] (12, 13) circle (2pt);
\filldraw [blue, fill=blue] (16, 17) circle (2pt);
\filldraw [blue, fill=blue] (20, 21) circle (2pt);
\filldraw [red, fill=red] (0, 2) circle (2pt);
\filldraw [red, fill=red] (3, 3) circle (2pt);
\filldraw [red, fill=red] (4, 6) circle (2pt);
\filldraw [red, fill=red] (8, 10) circle (2pt);
\filldraw [red, fill=red] (12, 14) circle (2pt);
\filldraw [red, fill=red] (16, 18) circle (2pt);
\filldraw [red, fill=red] (20, 22) circle (2pt);

\filldraw [black, fill=black] (7, 5) circle (2pt);
\filldraw [blue, fill=blue] (7, 6) circle (2pt);
\filldraw [red, fill=red] (7, 7) circle (2pt);

\filldraw [black, fill=black] (11, 9) circle (2pt);
\filldraw [blue, fill=blue] (11, 10) circle (2pt);
\filldraw [red, fill=red] (11, 11) circle (2pt);

\filldraw [black, fill=black] (15, 13) circle (2pt);
\filldraw [blue, fill=blue] (15, 14) circle (2pt);
\filldraw [red, fill=red] (15, 15) circle (2pt);

\filldraw [black, fill=black] (19, 17) circle (2pt);
\filldraw [blue, fill=blue] (19, 18) circle (2pt);
\filldraw [red, fill=red] (19, 19) circle (2pt);

\filldraw [black, fill=black] (23, 21) circle (2pt);
\filldraw [blue, fill=blue] (23, 22) circle (2pt);
\filldraw [red, fill=red] (23, 23) circle (2pt);

\draw (0, 0) node [anchor=east] {$1$};
\draw (3, 1) node [anchor=east] {$v_2$};
\draw (4, 4) node [anchor=east] {$P$};
\draw (8, 8) node [anchor=east] {$P^2$};
\draw (12, 12) node [anchor=east] {$P^3$};
\draw (16, 16) node [anchor=east] {$P^4$};
\draw (20, 20) node [anchor=east] {$P^5$};
\draw (24, 24) node [anchor=east] {$P^6$};

\end{tikzpicture}
\caption{The $E_{\infty}$ page of the $h_1$-inverted motivic Adams
  spectral sequence over a field $F$ with $\cd_2(F)\leq 2$ up to
  Milnor-Witt stem $24$.  
  The notational conventions of figure \ref{fig:cd2BSS-E1} apply here.}
\label{fig:cd2BSS-Einfty}
\end{figure}
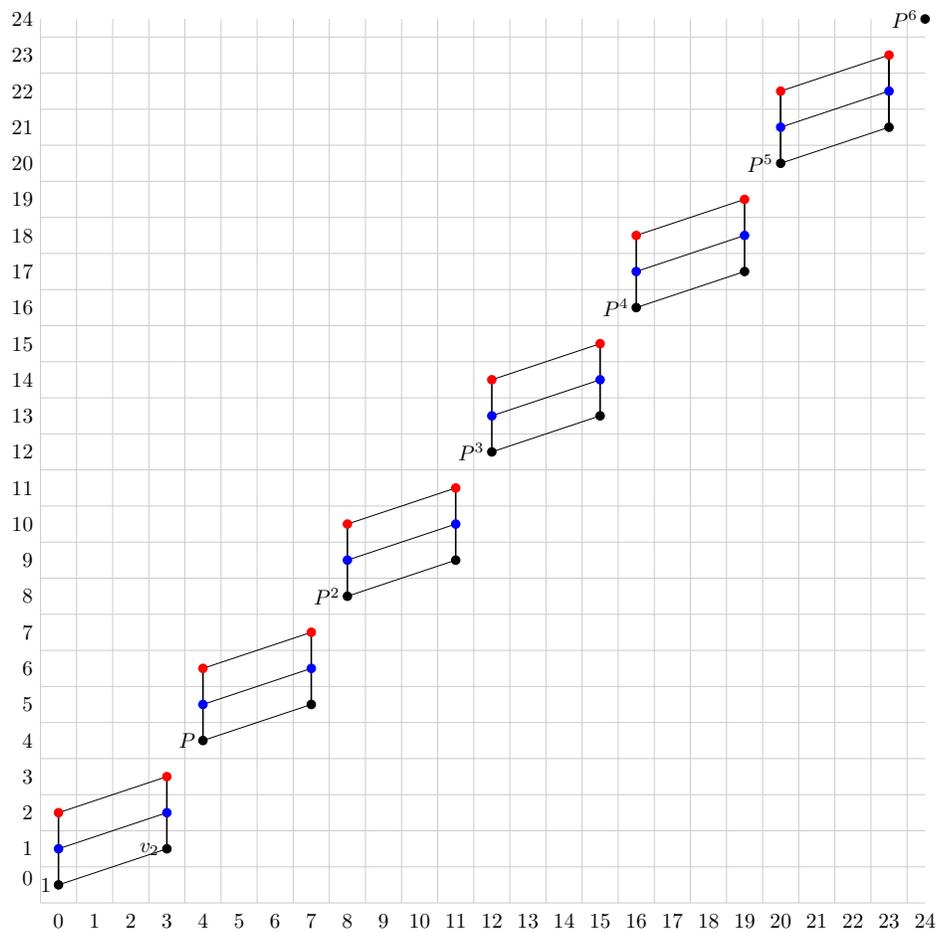

\begin{figure}
  \centering
\begin{tikzpicture}[x=.85cm, y=.6cm, scale=.9, transform shape]
\foreach \n in {0,...,19}{\draw[line width=.25pt, gray!40] (-0.5, \n-.5) -- (15.5, \n-.5);}
\foreach \n in {0,...,16}{\draw[line width=.25pt, gray!40] (\n-.5,-.5) -- (\n-.5,19);}
\foreach \n in {0,..., 15}{\draw (\n, -.6) node [anchor=north] {\n};}
\draw (-0.5, .2) node [anchor=east] {0};
\foreach \n in {1,...,19}{\draw (-0.5, \n) node [anchor=east] {\n};}


\draw [line width=.6pt, ->] (0+.3, 0)-- (0+.3, 19);
\draw [line width=.6pt, ->] (3+.3, 1)-- (3+.3, 19);
\draw [line width=.6pt, ->] (7+.3, 1)-- (7+.3, 19);
\draw [line width=.6pt, ->] (15+.2, 1)-- (15+.2, 19);
\draw [line width=.6pt, ->] (14+.3, 2)-- (14+.3, 19);
\draw [line width=.6pt, ->] (4+.3, 4)-- (4+.3, 19);
\draw [line width=.6pt, ->] (8+.3, 8)-- (8+.3, 19);
\draw [line width=.6pt, ->] (12+.3, 12)-- (12+.3, 19);
\draw [line width=.6pt, ->] (11+.3, 5)-- (11+.3, 19);
\draw [line width=.6pt, ->] (15-.2, 9)-- (15-.2, 19);

\foreach \n in {1,2}{\draw [line width=.6pt] (0, 0+\n+.1)-- (3, 1+\n+.1);}
\foreach \n in {1,2}{\draw [line width=.6pt] (0-.3, 0+\n+.2)-- (3-.3, 1+\n+.2);}
\foreach \n in {0,1}{\draw [line width=.6pt, ->, >=stealth] (3, 2+.1+\n)-- (4, 2.33+.1+\n);}
\foreach \n in {0,1}{\draw [line width=.6pt, ->, >=stealth] (3-.3, 2+.2+\n)-- (4-.3, 2.33+.2+\n);}

\foreach \n in {0}{\draw [line width=.6pt] (0+.3, \n)-- (3+.3, 1+\n);}
\foreach \n in {0}{\draw [line width=.6pt, ->, >=stealth] (3+.3, 1+\n)-- (4+.3, 1.33+\n);}
\foreach \n in {0}{\draw [line width=.6pt, ->, >=stealth] (7+.3, 1+\n)-- (8+.3, 1.33+\n);}
\foreach \n in {0}{\draw [line width=.6pt, ->, >=stealth] (15+.2, 1+\n)-- (16+.2, 1.33+\n);}
\foreach \n in {0}{\draw [line width=.6pt, ->, >=stealth] (14+.3, 2+\n)-- (15+.3, 2.33+\n);}
\foreach \n in {0}{\draw [line width=.6pt, ->, >=stealth] (4+.3, 4+\n)-- (5+.3, 4.33+\n);}
\foreach \n in {0}{\draw [line width=.6pt, ->, >=stealth] (8+.3, 8+\n)-- (9+.3, 8.33+\n);}
\foreach \n in {0}{\draw [line width=.6pt, ->, >=stealth] (12+.3, 12+\n)-- (13+.3, 12.33+\n);}

\foreach \n in {0}{\draw [line width=.6pt, ->, >=stealth] (11+.3, 5+\n)-- (12+.3, 5.33+\n);}
\foreach \n in {0}{\draw [line width=.6pt, ->, >=stealth] (15-.2, 9+\n)-- (16-.2, 9.33+\n);}

\foreach \n in {0,...,18}{\filldraw [black, fill=black] (0+.3, \n) circle (2pt);}
\foreach \n in {0,...,17}{\filldraw [black, fill=black] (3+.3, 1+\n) circle (2pt);}
\foreach \n in {0,...,17}{\filldraw [black, fill=black] (7+.3, 1+\n) circle (2pt);}
\foreach \n in {0,...,17}{\filldraw [black, fill=black] (15+.2, 1+\n) circle (2pt);}
\foreach \n in {0,...,16}{\filldraw [black, fill=black] (14+.3, 2+\n) circle (2pt);}
\foreach \n in {0,...,14}{\filldraw [black, fill=black] (4+.3, 4+\n) circle (2pt);}
\foreach \n in {0,...,10}{\filldraw [black, fill=black] (8+.3, 8+\n) circle (2pt);}
\foreach \n in {0,...,6}{\filldraw [black, fill=black] (12+.3, 12+\n) circle (2pt);}
\foreach \n in {0,...,13}{\filldraw [black, fill=black] (11+.3, 5+\n) circle (2pt);}
\foreach \n in {0,...,9}{\filldraw [black, fill=black] (15-.2, 9+\n) circle (2pt);}

\onetwomid{0}{1}
\onetwomid{4}{5}
\onetwomid{8}{9}
\onetwomid{12}{13}

\onetwomid{3}{2}
\onetwomid{3}{2}
\onetwomid{11}{6}

\onetwomid{7}{2}
\onetwomid{12}{13}
\onetwomid{14}{3}

\onetwomid{15}{2}

\draw[line width=.6pt] (15-.05, 10+.3) -- (15-.05, 10+1+.4);
\foreach \n in {0,1}{\filldraw [black, fill=blue] (15-.05, \n+10+.3) circle (2pt);}
\foreach \n in {0}{\filldraw [black, fill=red] (15-.35, \n+10+.3) circle (2pt);} 
\foreach \n in {1}{\filldraw [black, fill=green] (15-.35, \n+10+.3) circle (2pt);}

\draw (0, 0) node [anchor=north] {$1$};
\draw (3.3, 1) node [anchor=north] {$v_2$};
\draw (7.3, 1) node [anchor=north] {$v_3$};
\draw (15.3, 1) node [anchor=north] {$v_4$};
\draw (14.3, 2) node [anchor=north] {$v_3^2$};
\draw (4.3, 4) node [anchor=north] {$P$};
\draw (8.3, 8) node [anchor=north] {$P^2$};
\draw (12.3, 12) node [anchor=north] {$P^3$};
\draw (11.3, 5) node [anchor=north] {$Pv_3$};
\draw (15-.25, 9) node [anchor=north] {$P^2v_3$};

\end{tikzpicture}
\caption{The $E_1$ page of the $h_1$-inverted $\rho$-Bockstein
  spectral sequence over $\bbQ$ up to Milnor-Witt stem $15$.  Black
  dots represent the group $\bbZ/2[h_1^{\pm 1}]$, blue dots represent
  $\oplus_{p\equiv 3\, (4)} \bbZ/2[h_1^{\pm 1}]$, red dots represent
  $\oplus_{p\equiv 1\,(4), p=2} \bbZ/2[h_1^{\pm 1}]$, and green dots
  represent $\oplus_{p\equiv 1\,(4)} \bbZ/2[h_1^{\pm 1}]$. Solid
  vertical lines indicate multiplication by $\rho$ and a vertical
  arrow means that the tower of $\rho$-multiplications continues
  indefinitely. Every dot supports an infinite tower of
  $v_2$-multiples, however we only indicate this with lines and arrows
  of slope $1/3$ on the classes of $\Ext(\bbC)[h_1^{-1}]$ and
  $k^M_*(\bbQ)$.  The horizontal axis $t$ is the Milnor-Witt stem and
  the vertical axis $c$ is the Chow weight, while the Adams filtration
  is suppressed.}
\label{fig:Q-RhoBSS}
\end{figure}

\begin{sidewaysfigure}
  \centering
\begin{tikzpicture}[x=1cm, y=.63cm, scale=1, transform shape]
\foreach \n in {0,...,16}{\draw[line width=.25pt, gray!40] (-0.5, \n-.5) -- (15.5, \n-.5);}
\foreach \n in {0,...,16}{\draw[line width=.25pt, gray!40] (\n-.5,-.5) -- (\n-.5,16);}
\foreach \n in {0,..., 15}{\draw (\n, -.6) node [anchor=north] {\n};}
\draw (-0.5, .2) node [anchor=east] {0};
\foreach \n in {1,...,15}{\draw (-0.5, \n) node [anchor=east] {\n};}

\draw[line width=.6pt, ->] (0+.3, 0) -- (0+.3, 16);

\foreach \n in {0,1}{\draw[line width=.6pt] (3+.3, \n+1) -- (3+.3, \n+2);}
\foreach \n in {0,1}{\draw[line width=.6pt] (6+.3, \n+2) -- (6+.3, \n+3);}
\foreach \n in {0,1}{\draw[line width=.6pt] (9+.3, \n+3) -- (9+.3, \n+4);}
\foreach \n in {0,1}{\draw[line width=.6pt] (10+.3, \n+2) -- (10+.3, \n+3);}
\foreach \n in {0,1}{\draw[line width=.6pt] (11+.3, \n+9) -- (11+.3, \n+10);}
\foreach \n in {0,1}{\draw[line width=.6pt] (14+.3, \n+10) -- (14+.3, \n+11);}

\foreach \n in {0,...,5}{\draw[line width=.6pt] (7+.3, \n+1) -- (7+.3, \n+2);}
\foreach \n in {0,...,5}{\draw[line width=.6pt] (14+.3, \n+2) -- (14+.3, \n+3);}

\foreach \n in {0,...,13}{\draw[line width=.6pt] (15+.3, \n+1) -- (15+.3, \n+2);}

\foreach \n in {0,1,2}{\draw [line width=.6pt] (0+.3, 0+\n)-- (3+.3, 1+\n);}
\foreach \n in {0,1,2}{\draw [line width=.6pt] (3+.3, 1+\n)-- (6+.3, 2+\n);}
\foreach \n in {0,1,2}{\draw [line width=.6pt] (6+.3, 2+\n)-- (9+.3, 3+\n);}
\foreach \n in {0,1,2}{\draw [line width=.6pt, ->, >=stealth] (9+.3, 3+\n)-- (10.5+.3, 3.49+\n);}

\foreach \n in {0,1,2}{\draw [line width=.6pt] (11+.3, 9+\n)-- (14+.3, 10+\n);}

\foreach \n in {0,1,2}{\draw [line width=.6pt] (7+.3, 1+\n)-- (10+.3, 2+\n);}
\foreach \n in {0,1,2}{\draw [line width=.6pt, ->, >=stealth] (10+.3, 2+\n)-- (11.5+.3, 2.49+\n);}
\foreach \n in {0,1,2}{\draw [line width=.6pt, ->, >=stealth] (14+.3, 2+\n)-- (15.5+.3, 2.49+\n);}
\foreach \n in {0,1,2}{\draw [line width=.6pt, ->, >=stealth] (14+.3, 10+\n)-- (15.5+.3, 10.49+\n);}

\foreach \n in {0,1,2}{\draw [line width=.6pt, red] (7+.3, 1+\n)-- (6+.3, 2+\n);}
\foreach \n in {0,1,2}{\draw [line width=.6pt, red] (10+.3, 2+\n)-- (9+.3, 3+\n);}
\foreach \n in {0,...,6}{\draw [line width=.6pt, red] (15+.3, 1+\n)-- (14+.3, 2+\n);}

\foreach \n in {0,...,15}{\filldraw [black, fill=black] (0+.3, \n) circle (2pt);}

\foreach \n in {0,1,2}{\filldraw [black, fill=black] (3+.3, \n+1) circle (2pt);}
\foreach \n in {0,1,2}{\filldraw [black, fill=black] (6+.3, \n+2) circle (2pt);}
\foreach \n in {0,1,2}{\filldraw [black, fill=black] (9+.3, \n+3) circle (2pt);}
\foreach \n in {0,1,2}{\filldraw [black, fill=black] (10+.3, \n+2) circle (2pt);}
\foreach \n in {0,1,2}{\filldraw [black, fill=black] (11+.3, \n+9) circle (2pt);}
\foreach \n in {0,1,2}{\filldraw [black, fill=black] (14+.3, \n+10) circle (2pt);}

\foreach \n in {0,...,6}{\filldraw [black, fill=black] (7+.3, \n+1) circle (2pt);}
\foreach \n in {0,...,6}{\filldraw [black, fill=black] (14+.3, \n+2) circle (2pt);}

\foreach \n in {0,...,14}{\filldraw [black, fill=black] (15+.3, \n+1) circle (2pt);}

\foreach \n in {0,...,3}{\filldraw [black, fill=black] (7+.3, \n+4) circle (2pt);}
\foreach \n in {0,1,2}{\filldraw [black, fill=black] (11+.3, \n+9) circle (2pt);}
\foreach \n in {0,...,4}{\filldraw [black, fill=black] (15+.3, \n+11) circle (2pt);}

\newdtwomed{7}{2}
\newdtwomed{10}{3}
\newdtwomed{11}{6}
\newdtwomed{15}{2}
\newdtwomed{15}{10}

\foreach \n in {1,2}{\draw [line width=.6pt] (0, 0+.1+\n)-- (9, 3+.1+\n);}
\foreach \n in {1,2}{\draw [line width=.6pt] (4, 4+.1+\n)-- (10, 6+.1+\n);}
\foreach \n in {1,2}{\draw [line width=.6pt] (8, 8+.1+\n)-- (14, 10+.1+\n);}
\foreach \n in {1,2}{\draw [line width=.6pt] (12, 12+.1+\n)-- (15, 13+.1+\n);}
\foreach \n in {1,2}{\draw [line width=.6pt] (0-.3, 0+.2+\n)-- (9-.3, 3+.2+\n);}
\foreach \n in {1,2}{\draw [line width=.6pt] (4-.3, 4+.2+\n)-- (10-.3, 6+.2+\n);}
\foreach \n in {1,2}{\draw [line width=.6pt] (8-.3, 8+.2+\n)-- (14-.3, 10+.2+\n);}
\foreach \n in {1,2}{\draw [line width=.6pt] (12-.3, 12+.2+\n)-- (15-.3, 13+.2+\n);}

\vtwomultmid{9}{4}
\vtwomultmid{10}{3}
\vtwomultmid{10}{7}
\vtwomultmid{11}{6}
\vtwomultmid{14}{11}
\vtwomultmid{14}{3}
\vtwomultmid{15}{2}
\vtwomultmid{15}{10}
\vtwomultmid{15}{14}

\onetwomid{0}{1}
\onetwomid{4}{5}
\onetwomid{8}{9}
\onetwomid{12}{13}
\onetwomid{3}{2}
\onetwomid{7}{6}
\onetwomid{11}{10}
\onetwomid{15}{14}

\onetwomid{3}{2}
\onetwomid{6}{3}
\onetwomid{9}{4}

\onetwomid{11}{6}

\onetwomid{7}{2}
\onetwomid{10}{3}
\onetwomid{10}{7}
\onetwomid{12}{13}
\onetwomid{14}{3}
\onetwomid{14}{11}
\onetwomid{15}{2}
\onetwomid{15}{10}

\draw (3+.3, 1) node [anchor=north] {$v_2$};
\draw (7+.3, 1) node [anchor=north] {$v_3$};
\draw (15+.3, 1) node [anchor=north] {$v_4$};
\draw (14+.3, 2) node [anchor=north] {$v_3^2$};
\draw (11+.3, 9) node [anchor=north] {$P^2v_2$};

\end{tikzpicture}
\caption{The $E_{\infty}$ page of $\BSS(\bbQ)[h_1^{-1}]$, which is
  also $\Ext(\bbQ)[h_1^{-1}]$, up to Milnor-Witt stem 15.  Black dots
  represent the group $\bbZ/2[h_1^{\pm 1}]$, blue dots represent
  $\oplus_{p\equiv 3\, (4)} \bbZ/2[h_1^{\pm 1}]$, red dots represent
  $\oplus_{p\equiv 1\,(4), p=2} \bbZ/2[h_1^{\pm 1}]$, and green dots
  represent $\oplus_{p\equiv 1\,(4)} \bbZ/2[h_1^{\pm 1}]$. Solid
  vertical lines indicate multiplication by $\rho$ and a vertical
  arrow means that the tower of $\rho$-multiplications continues
  indefinitely. Multiplication by $v_2$ is indicated by lines of slope
  $1/3$ and an arrow of slope $1/3$ indicates that the class supports
  a tower of $v_2$-multiplications.  The $d_2$ differentials of
  $\mathfrak{M}(\bbQ)[h_1^{-1}]$ are indicated with red lines of slope
  -1. }
\label{fig:Q-RhoBSSInfinity}
\end{sidewaysfigure}

\begin{figure}
  \centering
\begin{tikzpicture}[x=.77cm, y=.77cm, scale=1, transform shape]
\foreach \n in {0,...,16}{\draw[line width=.25pt, gray!40] (-0.5, \n-.5) -- (15.5, \n-.5);}
\foreach \n in {0,...,16}{\draw[line width=.25pt, gray!40] (\n-.5,-.5) -- (\n-.5,15.5);}
\foreach \n in {0,..., 15}{\draw (\n, -.6) node [anchor=north] {\n};}
\draw (-0.5, .2) node [anchor=east] {0};
\foreach \n in {1,...,15}{\draw (-0.5, \n) node [anchor=east] {\n};}

\draw[line width=.6pt, ->] (0+.3, 0) -- (0+.3, 15);

\foreach \n in {0,1}{\draw[line width=.6pt] (3+.3, \n+1) -- (3+.3, \n+2);}
\foreach \n in {0,1}{\draw[line width=.6pt] (11+.3, \n+9) -- (11+.3, \n+10);}
\foreach \n in {0,1}{\draw[line width=.6pt] (14+.3, \n+10) -- (14+.3, \n+11);}

\draw[line width=.6pt] (7+.3, 4) -- (7+.3, 7);

\draw[line width=.6pt] (15+.3, 8) -- (15+.3, 15);

\foreach \n in {0,1,2}{\draw [line width=.6pt] (0+.3, 0+\n)-- (3+.3, 1+\n);}
\foreach \n in {0,1,2}{\draw [line width=.6pt] (11+.3, 9+\n)-- (14+.3, 10+\n);}

\foreach \n in {0,1,2}{\draw [line width=.6pt, blue] (15+.3, 8+\n)-- (14+.3, 10+\n);}

\foreach \n in {0,...,14}{\filldraw [black, fill=black] (0+.3, \n) circle (2pt);}

\foreach \n in {0,1,2}{\filldraw [black, fill=black] (3+.3, \n+1) circle (2pt);}
\foreach \n in {0,...,3}{\filldraw [black, fill=black] (7+.3, \n+4) circle (2pt);}
\foreach \n in {0,1,2}{\filldraw [black, fill=black] (11+.3, \n+9) circle (2pt);}
\foreach \n in {0,1,2}{\filldraw [black, fill=black] (14+.3, \n+10) circle (2pt);}
\foreach \n in {7,...,14}{\filldraw [black, fill=black] (15+.3, \n+1) circle (2pt);}
\foreach \n in {0,...,4}{\filldraw [black, fill=black] (15+.3, \n+11) circle (2pt);}

\foreach \n in {1,2}{\draw [line width=.6pt] (0, 0+.1+\n)-- (3, 1+.1+\n);}
\foreach \n in {1,2}{\draw [line width=.6pt] (4, 4+.1+\n)-- (7, 5+.1+\n);}
\foreach \n in {1,2}{\draw [line width=.6pt] (8, 8+.1+\n)-- (11, 9+.1+\n);}
\foreach \n in {1,2}{\draw [line width=.6pt] (12, 12+.1+\n)-- (15, 13+.1+\n);}
\foreach \n in {1,2}{\draw [line width=.6pt] (0-.3, 0+.2+\n)-- (3-.3, 1+.2+\n);}
\foreach \n in {1,2}{\draw [line width=.6pt] (4-.3, 4+.2+\n)-- (7-.3, 5+.2+\n);}
\foreach \n in {1,2}{\draw [line width=.6pt] (8-.3, 8+.2+\n)-- (11-.3, 9+.2+\n);}
\foreach \n in {1,2}{\draw [line width=.6pt] (12-.3, 12+.2+\n)-- (15-.3, 13+.2+\n);}

\onetwomid{0}{1}
\onetwomid{4}{5}
\onetwomid{8}{9}
\onetwomid{12}{13}
\onetwomid{3}{2}
\onetwomid{7}{6}
\onetwomid{11}{10}
\onetwomid{15}{14}

\draw (3+.3, 1) node [anchor=north] {$v_2$};
\draw (7+.3, 4) node [anchor=north] {$\rho^3v_3$};
\draw (11+.3, 9) node [anchor=north] {$P^2v_2$};
\draw (15+.3, 8) node [anchor=north] {$\rho^8v_4$};

\end{tikzpicture}
\caption{The $E_3$ page of $\MASS(\bbQ)[h_1^{-1}]$ up to Milnor-Witt
  stem 15. The $d_3$ differentials are indicated with blue lines of
  slope -2. The notational conventions of figure
  \ref{fig:Q-RhoBSSInfinity} apply here.}
\label{fig:Q-E3}
\end{figure}
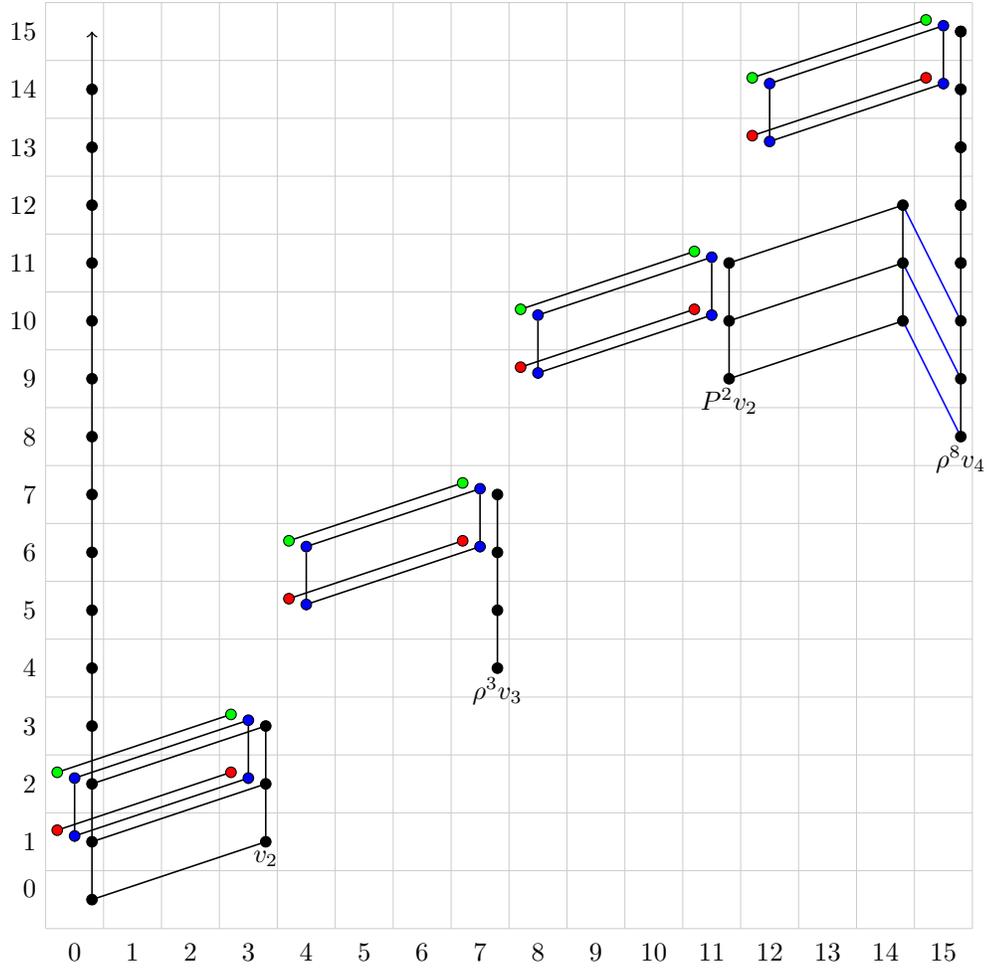

\begin{figure}
  \centering
\begin{tikzpicture}[x=.77cm, y=.8cm, scale=1, transform shape]
\foreach \n in {0,...,16}{\draw[line width=.25pt, gray!40] (-0.5, \n-.5) -- (15.5, \n-.5);}
\foreach \n in {0,...,16}{\draw[line width=.25pt, gray!40] (\n-.5,-.5) -- (\n-.5,15.5);}
\foreach \n in {0,..., 15}{\draw (\n, -.6) node [anchor=north] {\n};}
\draw (-0.5, .2) node [anchor=east] {0};
\foreach \n in {1,...,15}{\draw (-0.5, \n) node [anchor=east] {\n};}

\draw[line width=.6pt, ->] (0+.3, 0) -- (0+.3, 15);

\foreach \n in {0,1}{\draw[line width=.6pt] (3+.3, \n+1) -- (3+.3, \n+2);}
\foreach \n in {0,1}{\draw[line width=.6pt] (11+.3, \n+9) -- (11+.3, \n+10);}

\draw[line width=.6pt] (7+.3, 4) -- (7+.3, 7);

\draw[line width=.6pt] (15+.3, 11) -- (15+.3, 15);

\foreach \n in {0,1,2}{\draw [line width=.6pt] (0+.3, 0+\n)-- (3+.3, 1+\n);}

\foreach \n in {0,...,14}{\filldraw [black, fill=black] (0+.3, \n) circle (2pt);}

\foreach \n in {0,1,2}{\filldraw [black, fill=black] (3+.3, \n+1) circle (2pt);}
\foreach \n in {0,...,3}{\filldraw [black, fill=black] (7+.3, \n+4) circle (2pt);}
\foreach \n in {0,1,2}{\filldraw [black, fill=black] (11+.3, \n+9) circle (2pt);}
\foreach \n in {10,...,14}{\filldraw [black, fill=black] (15+.3, \n+1) circle (2pt);}

\foreach \n in {1,2}{\draw [line width=.6pt] (0, 0+.1+\n)-- (3, 1+.1+\n);}
\foreach \n in {1,2}{\draw [line width=.6pt] (4, 4+.1+\n)-- (7, 5+.1+\n);}
\foreach \n in {1,2}{\draw [line width=.6pt] (8, 8+.1+\n)-- (11, 9+.1+\n);}
\foreach \n in {1,2}{\draw [line width=.6pt] (12, 12+.1+\n)-- (15, 13+.1+\n);}
\foreach \n in {1,2}{\draw [line width=.6pt] (0-.3, 0+.2+\n)-- (3-.3, 1+.2+\n);}
\foreach \n in {1,2}{\draw [line width=.6pt] (4-.3, 4+.2+\n)-- (7-.3, 5+.2+\n);}
\foreach \n in {1,2}{\draw [line width=.6pt] (8-.3, 8+.2+\n)-- (11-.3, 9+.2+\n);}
\foreach \n in {1,2}{\draw [line width=.6pt] (12-.3, 12+.2+\n)-- (15-.3, 13+.2+\n);}

\onetwomid{0}{1}
\onetwomid{4}{5}
\onetwomid{8}{9}
\onetwomid{12}{13}
\onetwomid{3}{2}
\onetwomid{7}{6}
\onetwomid{11}{10}
\onetwomid{15}{14}

\draw (3+.3, 1) node [anchor=north] {$v_2$};
\draw (7+.3, 4) node [anchor=north] {$\rho^3v_3$};
\draw (11+.3, 9) node [anchor=north] {$P^2v_2$};
\draw (15+.3, 11) node [anchor=north] {$\rho^{10}v_4$};

\end{tikzpicture}
\caption{ The $E_{\infty}$ page of $\mathfrak{M}(\bbQ)[h_1^{-1}]$ up
  to Milnor-Witt stem 15. The notational conventions of figure
  \ref{fig:Q-RhoBSSInfinity} apply here.}
\label{fig:Q-EInfinity}

\end{figure}
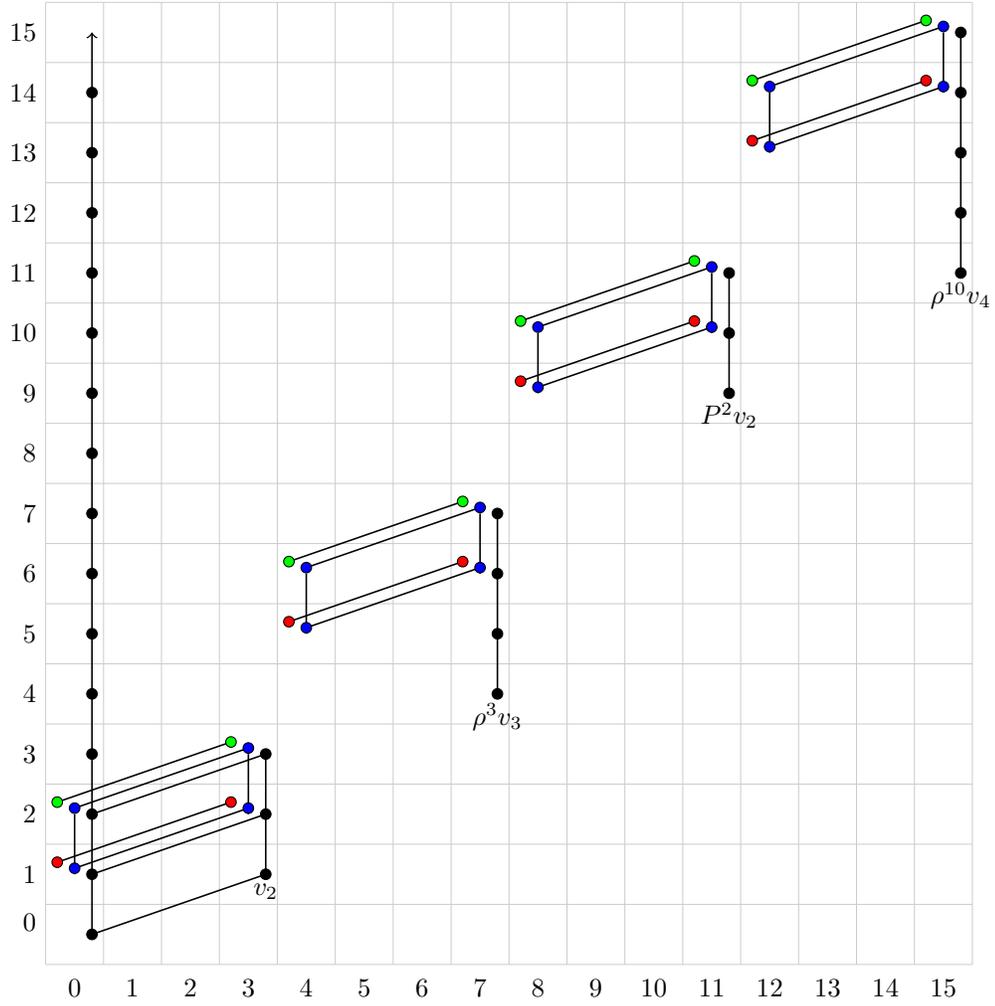

\end{document}